\newtheorem{theorem}{Theorem}[section]
\newtheorem{theorem*}{Theorem}
\newtheorem{lemma}[theorem]{Lemma}
\newtheorem{corollary}[theorem]{Corollary}
\newtheorem{definition}[theorem]{Definition}
\newtheorem{example}[theorem]{Example}
\newtheorem{proposition}[theorem]{Proposition}
\newtheorem{conjecture}[theorem]{Conjecture}
\theoremstyle{remark}
\newtheorem{remark}[theorem]{Remark}
\numberwithin{equation}{section}
\title[A Generalized Spectral Correspondence]{A Generalized Spectral Correspondence}
\author{Kuntal Banerjee}
\address{Department of Pure Mathematics, University of Waterloo, ON, Canada~ N2L 0A4}
\email{k4banerj@uwaterloo.ca}
\author{Steven Rayan}
\address{Centre for Quantum Topology and Its Applications (quanTA) and Department of Mathematics and Statistics, University of Saskatchewan, SK, Canada~ S7N 5E6}
\email{rayan@math.usask.ca}
\begin{document}
\begin{abstract}
We explore a strong categorical correspondence between isomorphism classes of sheaves of arbitrary rank on a given algebraic curve and twisted pairs on another algebraic curve, mostly from a linear-algebraic standpoint. In a particular application, we realize a generic elliptic curve as a spectral cover of the complex projective line $\mathbb{P}^1$ and then construct examples of cyclic pairs and co-Higgs bundles over $\mathbb{P}^1$. By appealing to a composite push-pull projection formula, we conjecture an iterated version of spectral correspondence. We prove this conjecture for a particular class of spectral covers of $\mathbb {P}^1$ through Galois-theoretic arguments. The proof relies upon a classification of Galois groups into primitive and imprimitive types. In this context, we revisit a classical theorem of Ritt.
\end{abstract}
\maketitle 
\tableofcontents

\let\thefootnote\relax\footnotetext{2020\textit{ Mathematics Subject Classification.} 14D20, 14H60, 20B35.}\let\thefootnote\relax\footnotetext{\textit{Keywords and phrases.} Spectral correspondence, spectral curve, twisted pair, Higgs bundle, co-Higgs bundle, moduli space, semistability, Hitchin fibration, projective line, elliptic curve, push-pull formula, cartographic group, Galois group, monodromy group.}

\section{Introduction}

The spectral correspondence for Higgs bundles, first identified by Hitchin \cite{Hitch2}, reveals finer geometric and algebraic structure within the moduli space of semistable $G$-Higgs bundles over a complex algebraic curve $X$.  Here, $G$ is a reductive group of finite rank and the so-called ``Higgs field'' of each Higgs bundle is valued in the canonical line bundle $K_X$. At the level of an individual Higgs bundle, the correspondence produces a new curve $\widetilde X$ encoding the spectrum of the Higgs field as a finite-to-one branched cover of $X$ together with a rank-$1$ sheaf on $\widetilde X$ that records the eigenspaces of the Higgs field.  When one pushes back this data to the original curve, the correspondence produces a representation of the original Higgs bundle in which the Higgs field is diagonalized everywhere save for at ramification points.  In this way, the spectral correspondence is a globalization of familiar aspects of the linear algebra of operators on finite-dimensional vector spaces --- in other words, of Higgs bundles over the point. The global spectral correspondence was subsequently expanded by Beauville-Narasimhan-Ramanan\cite{BNR} to the case of $L$-twisted Higgs fields, where $L$ is now an arbitrary line bundle. The corresponding moduli space of semistable $L$-twisted Higgs bundles was constructed by Nitsure \cite{Nitin}, who also gave a proof of properness of a morphism usually known in this context as the \textit{Hitchin morphism}.  The spectral correspondence, both in the original development for $L=K_X$ and for general $L$, leads to a convenient characterization of the generic fiber of the Hitchin morphism as the Jacobian (or Prym variety) of a spectral curve. A spectral correspondence between twisted pairs and vector bundles of higher rank was initiated by Hitchin and Schaposnik \cite{lau} in the setting of Higgs bundles associated to real subgroups of complex Lie groups. They refer to this operation as a ``nonabelianization'' of Higgs bundles. The operation is nonabelian in two related ways, first as the spectral bundle is no longer rank $1$ and in the fact that the fiber of the analogous Hitchin map is no longer an abelian variety. The spectral curve, according to their work, is the underlying reduced curve defined by a non-reduced characteristic polynomial.\\

In this article, we first provide some exposition about the algebro-geometric construction of spectral curves (Equation \ref{Spec}) and of the spectral correspondence for twisted Higgs bundles (also referred to as ``twisted pairs'') over a smooth algebraic curve. This exposition culminates in Theorem \ref{extension} and Corollary \ref{extsemi}), which emerge mostly through the language of linear algebra over unique factorization domains. We treat examples of cyclic Higgs bundles (Section \ref{stp}) and co-Higgs bundles of higher ranks (Section \ref{numcom}) over $\mathbb{P}^1$ in this way. Moreover, this perspective allows us to call upon Galois-theoretic techniques (Proposition \ref{Intermediate}) to elicit a threefold avatar of the spectral correspondence for cyclic pairs (\ref{finalth}).

\section{Overview of the article} 

Some inspiration for the algebraic construction of spectral curves in this article comes from \cite{Gallego,gallego2023higgs}. Recall that a smooth curve $X$ is a Noetherian scheme and each point $x\in X$ admits a Noetherian local ring (of stalks of regular functions). We replace the annihilating polynomials (Definition \ref{anni}) of twisted pairs with their counterparts on locally free stalks (Section \ref{annimor}) and explore linear maps on stalks over the function field of $X$ --- that is, over the stalk of regular functions at the generic point of $X$ --- culminating in theorems \ref{ufd1} and \ref{ufd2}. These linear maps on stalks furnish us with global characteristic polynomials of pairs and their invariant subbundles (Remark \ref{invariant}). We package this discussion ultimately as a sheaf theoretic correspondence between $X$ and a spectral curve $X_s$ that is embedded in the total space of the twisting line bundle (Theorem \ref{extension}). This further extends to a higher categorical correspondence between (semi)stable bundles on the spectral curve and (semi)stable pairs on $X$ (corollaries \ref{extsemi} and \ref{jorho} and Proposition \ref{har}).\\
    
 We investigate in Section \ref{Ite} a composite projection formula (\ref{itpush}) of locally free sheaves under composition of finite morphisms. This begs an immediate question about the factorizability of a smooth spectral covering map. We arrive at an affirmative answer in a foundational, yet ultimately nontrivial, case with the complex projective line as the base curve (Section \ref{compl}). We focus on a class of non-generic spectral curve that we call \textit{cyclic spectral curves}, owing to the fact that their Galois groups are cyclic (Section \ref{compl}). The Fundamental Theorem of Galois Theory and a categorical equivalence between function fields and algebraic curves (Proposition \ref{Intermediate}) are employed to complete the argument. We assemble the consequences of this so-called \textit{iterated spectral correspondence} in Theorem \ref{finalth}. We complete the article by revisiting a theorem of J.F. Ritt (Theorem \ref{Ritt}) that inspired us during the course of this investigation. We also pose a conjecture (\ref{conj}) that may lead to new directions in this research thread.\\

\let\thefootnote\relax\footnotetext{\textit{Keywords and phrases.} Spectral correspondence, spectral curve, twisted pair, Higgs bundle, co-Higgs bundle, moduli space, semistability, Hitchin fibration, projective line, elliptic curve, push-pull formula, cartographic group, Galois group, monodromy group.}

\noindent\textbf{Acknowledgements.} We thank Michael Gr\"ochenig for a thorough reading of a prior version of some of this material that appeared in the first-named author's doctoral dissertation (cf. \cite{Banerjee}), which was written under the supervision of the second-named author. This reading resulted in important recommendations and corrections.  During the preparation of this manuscript, the second-named author was partially supported by a Natural Sciences and Engineering Research Council of Canada (NSERC) Discovery Grant. The first-named author was supported by a Graduate Teaching Fellowship at the University of Saskatchewan that was funded in part by the second-named author’s Discovery Grant. Finally, both authors are grateful to the anonymous referee for their careful read of the manuscript, resulting in additional helpful suggestions and corrections.

\section{Background regarding $L$-twisted pairs on curves}

Let $X$ be an irreducible, nonsingular, projective algebraic curve over $\mathbb{C}$; equivalently, a smooth, compact, connected Riemann surface with genus $g_X\geq 0$. We will use ``curve'' and ``Riemann surface'' (or just \lq surface\rq) interchangeably to refer to such an object. For another convention and without ambiguity, we will use the symbol $1$ to denote the identity morphism from a bundle to itself (with the particular bundle understood by context).  Let $L$ be a holomorphic line bundle on $X$ with projection map $\pi: \text{Tot}(L)\to X$ where $\text{Tot}(L)$ is the total space of $L$. By an $L$-twisted \textit{pair} or \textit{Hitchin pair} on $X$ we mean a pair $(E, \phi)$ in which $E$ is a vector bundle over $X$ of finite rank $r$ and $\phi: E\to E\otimes L$ is a bundle morphism. The bundle morphism $\phi$ can be viewed as an element of $H^0(X, \text{End}(E)\otimes L)$. A \textit{morphism} of $L$-twisted pairs $(E, \phi)$ and $(E', \phi')$ is a commutative diagram as follows:

\begin{equation}
\begin{tikzcd}
E \arrow[r, "\phi"] \arrow[d, "\psi"]
& E\otimes L \arrow[d, "\psi \otimes 1 = \psi' "] \\
E' \arrow[r, "\phi' "]
& E'\otimes L
\end{tikzcd}
\end{equation} in which $\psi: E\to E'$ is a bundle morphism. Here, $1$ denotes the identity morphism on $L$ (consistent with our convention).  The pairs $(E, \phi)$ and $(E', \phi')$ are said to be \textit{isomorphic} if there exists an isomorphism $\psi: E\to E'$ of bundles such that $\phi' = \psi'\circ\phi\circ\psi^{-1}$. 

\begin{definition}
    A subbundle $F$ of an $L$-twisted pair $(E, \phi)$ is said to be $\phi$-\textbf{invariant} if $\phi(F)\subseteq F\otimes L$.
\end{definition}
\begin{definition}
    If $E$ is a vector bundle (over a curve), then its \textbf{slope} is the rational number$$\mu(E) = \displaystyle\frac{\deg(E)}{{\mbox{rank}}(E)}.$$
\end{definition}

\begin{definition}
An $L$-twisted pair $(E, \phi)$ is said to be  a \textbf{stable} (resp. \textbf{semistable}) pair if each nontrivial proper $\phi$-invariant subbundle $F$ satisfies the slope inequality 
\begin{equation}\label{slope}
\mu(F) < (\text{resp.}\leq)~ \mu(E).
\end{equation}
\end{definition}
\begin{remark}
    In the event that $E$ is stable (respectively, semistable), any $L$-twisted pair with underlying bundle $E$ is automatically stable (resp., semistable).
\end{remark}

\begin{proposition}
    Let $(E, \phi)$ be a semistable pair on $X$. Then, there exists a finite filtration of $\phi$-invariant subbundles of increasing ranks
    \begin{equation}
        0 = E_0\subset E_1\subset\dots \subset E_n = E
    \end{equation}
such that, for each $i = 1,\dots ,n$, we have $\mu(\frac{E_i}{E_{i - 1}}) = \mu(E)$ and the quotient pairs$$(\frac{E_i}{E_{i - 1}}, \phi_i:\frac{E_i}{E_{i - 1}}\to \frac{E_i}{E_{i - 1}}\otimes L )$$induced from $\phi$ are stable. The associated graded pair $\frak{gr}(E, \phi) = \bigoplus_{i = 1}^n(\frac{E_i}{E_{i - 1}}, \phi_i)$ is unique up to an isomorphism of $(E, \phi)$ and $\frak{gr}(E, \phi)$ is also semistable.
\end{proposition}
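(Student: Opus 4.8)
The plan is to recognize this statement as the Jordan--Hölder theorem for semistable $L$-twisted pairs and to prove it inside the category of semistable pairs of a fixed slope, which behaves like an abelian category whose simple objects are the stable pairs. For \emph{existence} I would induct on $r = \mathrm{rank}(E)$. If $(E,\phi)$ is already stable, the length-one filtration $0 = E_0 \subset E_1 = E$ works, so assume it is strictly semistable. Among all nonzero proper $\phi$-invariant subbundles $F$ with $\mu(F) = \mu(E)$ --- a nonempty collection, since strict semistability furnishes at least one --- I would choose one, $E_1$, of minimal rank. First I would check that the induced pair on $E_1$ is stable: every $\phi$-invariant subbundle of $E_1$ is also one of $E$, hence has slope $\le \mu(E) = \mu(E_1)$, so $E_1$ is semistable, and were it not stable it would contain a proper $\phi$-invariant subbundle of slope $\mu(E_1)$, contradicting minimality of its rank.

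Next I would verify that the quotient $(E/E_1,\bar\phi)$ is semistable of the same slope. Given a $\phi$-invariant subbundle $\bar Q \subseteq E/E_1$, its saturated preimage $Q \subseteq E$ is $\phi$-invariant with $E_1 \subseteq Q$ and fits into a short exact sequence $0 \to E_1 \to Q \to \bar Q \to 0$ of bundles; writing $\mu = \mu(E)$ and using $\deg E_1 = \mu\,\mathrm{rank}(E_1)$ together with semistability of $E$, additivity of degree and rank gives
\[
\deg \bar Q = \deg Q - \deg E_1 \le \mu\,\mathrm{rank}(Q) - \mu\,\mathrm{rank}(E_1) = \mu\,\mathrm{rank}(\bar Q),
\]
so $\mu(\bar Q) \le \mu$. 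Since $\mathrm{rank}(E/E_1) < r$, the inductive hypothesis supplies a Jordan--Hölder filtration of $(E/E_1,\bar\phi)$, and pulling it back through the quotient map produces the desired filtration of $(E,\phi)$ with stable subquotients all of slope $\mu$; the ranks increase strictly, so the process terminates.

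For \emph{uniqueness} I would pass to the category $\mathcal{C}_\mu$ of semistable pairs of slope $\mu = \mu(E)$ and argue that it is abelian with simple objects exactly the stable pairs of slope $\mu$. The crux is a Schur-type lemma: for a nonzero morphism $f:(E_1,\phi_1)\to(E_2,\phi_2)$ between semistable pairs of the common slope $\mu$, the saturated image is $\phi$-invariant of slope both $\ge \mu$ (as a quotient of $E_1$) and $\le \mu$ (as a subobject of $E_2$), forcing slope exactly $\mu$ and showing that kernels, images, and cokernels remain in $\mathcal{C}_\mu$; in particular a nonzero map between stable pairs of slope $\mu$ is an isomorphism. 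Once $\mathcal{C}_\mu$ is established as abelian with the stable pairs as simple objects, the classical Jordan--Hölder theorem in an abelian category yields that the multiset of subquotients $\{(E_i/E_{i-1},\phi_i)\}$ is independent of the chosen filtration up to isomorphism and reordering, which is exactly the asserted uniqueness of $\mathfrak{gr}(E,\phi)$.

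Finally, for semistability of the graded pair I would observe that each summand is stable of slope $\mu$, and that a direct sum of semistable pairs of a common slope is again semistable of that slope via a short slope estimate on $\phi$-invariant subbundles of the sum. I expect the \emph{main obstacle} to be the Schur-type/abelian-category step underlying uniqueness, together with the attendant saturation bookkeeping. On a smooth curve a torsion-free quotient is automatically locally free, so saturating subsheaves keeps us within genuine subbundles; but one must consistently replace images, kernels, and preimages by their saturations and confirm that $\phi$-invariance and the slope (in)equalities survive these operations before the abstract Jordan--Hölder machinery can be invoked.
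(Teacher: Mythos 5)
The paper does not actually prove this proposition: it is stated as standard background (the Jordan--H\"older theory for semistable twisted pairs, implicitly following Seshadri's classical argument as adapted by Nitsure \cite{Nitin}), so there is no in-paper proof to compare against. Your proposal is a correct and complete rendition of exactly that standard argument, and it is the right one: induction on rank with a minimal-rank $\phi$-invariant subbundle of slope $\mu$ for existence, and the Schur-type lemma making the slope-$\mu$ semistable pairs into an abelian category of finite length (with stable pairs as the simple objects) for uniqueness. The details you flag as the main obstacle do go through on a smooth curve: the kernel of a sheaf map into a locally free sheaf is automatically saturated, and in the fixed-slope category your own slope estimate shows a monomorphism has saturated image (degree of the image equals $\mu\,\mathrm{rank}$ already, and any strict saturation would violate semistability of the target), so monos are genuine $\phi$-invariant subbundle inclusions, proper subobjects strictly drop rank, and the classical Jordan--H\"older theorem applies. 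One small point worth writing out rather than gesturing at: semistability of the direct sum $\mathfrak{gr}(E,\phi)$ needs the standard induction on summands, bounding $\deg F$ for an invariant subsheaf $F$ by $\deg(F\cap G_1)+\deg(\mathrm{im}(F))$ in $0\to G_1\to\bigoplus_i G_i\to\bigoplus_{i\geq 2}G_i\to 0$; with that spelled out, the proof is complete.
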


\begin{remark}\label{Jor-Hol}
    The above filtration is called a \textit{Jordan-H\"{o}lder filtration} of the pair $(E, \phi)$.  We call two $L$-twisted semistable pairs $(E, \phi)$ and $(E', \phi')$ S-\textit{equivalent} if their graded pairs $\frak{gr}(E, \phi)$ and $\frak{gr}(E', \phi')$ are isomorphic.
\end{remark}

Nitsure \cite{Nitin} established a moduli construction for $S$-equivalence classes of $L$-twisted pairs on $X$ through a GIT quotient by an action of $\text{GL}(N,\mathbb{C})$ or $\text{SL}(N, \mathbb{C})$ for sufficiently large $N$. By $\mathcal{M}(r, d, L)$, we denote the quasi-projective coarse moduli scheme of $S$-equivalent classes of $L$-twisted pairs so that the underlying bundle $E$ of each pair admits rank $r$ and degree $d$. This moduli scheme contains the scheme $\mathcal{M}'(r, d, L)$ of stable pairs as an open subscheme. Moreover, they established that the dimension of the Zariski tangent space of $\mathcal{M}'(r, d, L)$ satisfies the formula \begin{equation}
    \dim T_{(E, \phi)} = r^2\deg(L) + 1 + \dim H^1(X, L)
\end{equation} in each of the following cases: $L\cong K_X$; $L^r\ncong K_X^r$ but $\deg(K_X) = \deg(L)$; and finally $\deg(L) > \deg(K_X)$. Going forward, we will restrict to twisting line bundles $L$ of positive degree.

\section{Annihilating polynomials of pairs and the Hitchin morphism}\label{annimor}
In this context, we explore annihilating polynomials associated with a twisted bundle morphism. The characteristic polynomial of a pair is a specific example of an annihilating polynomial. Let $E$ and $L$ be a vector bundle and a line bundle, respectively, on a curve $X$. Now, fix a line bundle $L$ on $X$ and let $s = (s_1,\dots , s_n)\in\bigoplus_{i = 1}^n H^0(X, L^i)$. For $\phi\in H^0(X, \text{End}(E)\otimes L)$, consider$$\phi\otimes 1: E\otimes L^{i - 1}\to (E\otimes L)\otimes L^{i - 1} = E\otimes L^i.$$Then $\phi^i: E\to E\otimes L^i$ is defined by $\phi^i := (\phi\otimes 1)\circ\phi^{i - 1}$, with the convention $\phi^0 = 1$. This definition yields a global section of $\text{End}(E)\otimes L^n$, namely$$\phi^n + \sum_{i = 1}^n s_i\otimes \phi^{n - i}.$$
\begin{definition}\label{anni}
The polynomial $p(\lambda) = \lambda^n + \sum_{i = 1}^n s_i\lambda^{n - i}$, where $(s_1,\dots , s_n)\in \bigoplus_{i = 1}^n H^0(X, L^i)$, is said to be an \textbf{annihilating polynomial} of $\phi$ if 
\begin{equation}
\phi^n + \sum\limits_{i = 1}^n s_i\otimes \phi^{n - i} = 0.
\end{equation}
We will say that $\phi$ \textbf{satisfies} $p$ if $p$ annihilates $\phi$.
\end{definition}
In later sections, we will blend our main reasoning regarding spectral curves with arguments about the stalks of regular functions. In particular, we will relate restrictions of sheaves on open subsets and their germs. Let $\mathcal{O}_X$ denote the sheaf of regular functions on a curve $X$ (alternatively, the sheaf of holomorphic functions on a compact Riemann surface $X$). We elicit a parallel set of constructions of objects in the ambiance of linear algebra. The basic ingredients we use are the sheaf homomorphisms, over the Noetherian local ring of germs $\mathcal{O}_{X,x}$ at a point $x\in X$. Let $\mathcal{L}$ denote the sheaf of sections of $L$. Thus $\mathcal{L}^i$ is invertible for any integer $i$, that is, stalks are free of rank $1$ at each point $x$. Let $\phi\in H^0(X, \mbox{End}(E)\otimes L)$. For any open set $U$ of $X$ we restrict $\phi$ on $U$ as a sheaf homomorphism. Furthermore, choose a trivializing neighbourhood $U$ (assumed to be connected, if necessary) of $L^{-1}$ and let $\Lambda$ be a generator of the restricted sheaf $\mathcal{O}(L^{-1})|_{U}$. That means, for each open subset $V$ of $U$, the sheaf of sections of $L^{-1}$ on $V$ is generated by $\Lambda$. A tensor product yields an element $\phi\otimes\Lambda\in\mathcal{O}(\text{End}(E)\otimes L)(V)\otimes\mathcal{O}(L^{-1})(V)$. The set $\mathcal{O}(\text{End}(E)\otimes L)(V)\otimes\mathcal{O}(L^{-1})(V)$ is a natural subset of  $\mathcal{O}(\text{End}(E))(V)$ by definition and we indicate a local element $\psi = \phi\otimes\Lambda$  in the sheaf $\mathcal{O}(\text{End}(E))|_{U}$. Implementing the $\mathcal{O}_X$-isomorphism $\mathcal{O}(\text{End}(E))\cong \text{End}(\mathcal{O}(E))$ and and considering respective restrictions on open subsets of $U$, we identify $\psi$ as an $\mathcal{O}_X|_U$-endomorphism of $\mathcal{O}(E)|_{U}$. More generally, a global section $s_i$ of $L^i$ contributes natural elements in the sheaf $\mathcal{O}_X|_{U}$, namely, $a_i = s_i\otimes \Lambda^i$. Taking respective germs at $x\in U$, we write $\psi_x = \phi_x\otimes\Lambda_x$ and $a_{i, x} = s_{i, x}\otimes \Lambda_x^i$ simply taking tensor product over $\mathcal{O}_{X, x}$. Passing to the level of stalks $\phi_x\in \mathcal{O}(\text{End}(E)\otimes L)_x$ and $\Lambda_x$ denotes a generator of the $\mathcal{O}_{X, x}$-free module $\mathcal{L}_x^{-1}$. Note that the definition of $a_{i, x}$, as an element of the set $\mathcal{O}_{X, x}$, depends on the choice of $U$.  We view $\psi_x$ as an $\mathcal{O}_{X,x}$-linear endomorphism of the free module $\mathcal{O}(E)_x$. It is a routine to remark that an appropriate definition of an annihilating polynomial for $\psi$ and $\psi_x$ exists.\\

Going forward, we maintain the assumption that $U$ is a trivializing neighbourhood of $L^{-1}$ and the associated polynomials at a point $x$ are defined with respect to $U$. Let $(s_1,\dots , s_n)\in \bigoplus_{i = 1}^n H^0(X, L^i)$ be a chosen tuple and it defines polynomials $\lambda^n + \sum_{i = 1}^n a_i \lambda^{n - i}$ which admits coefficients from the restricted sheaf $\mathcal{O}_{X}|_U$ and $\lambda^n + \sum_{i = 1}^n a_{i,x} \lambda^{n - i}$ coefficients of which are contributed by the ring $\mathcal{O}_{X,x}$. We informally refer to such polynomials over $\mathcal{O}_X|_U$ (and over $\mathcal{O}_{X,x}$) as \textit{the associated polynomials} of $s = (s_1,\dots , s_n)$ on $U$. Here we remark that a polynomial $p(\lambda)$ (of the form $\lambda^n + \sum_{i = 1}^n s_i \lambda^{n - i}$) is an annihilating polynomial of $\phi$ precisely if the associated polynomial of $p$ on each trivializing neighbourhood $U$ of $L^{-1}$ annihilates $\psi$ and precisely if at each point $x$, the associated polynomial $\lambda^n + \sum_{i = 1}^n a_{i, x} \lambda^{n - i}$ annihilates $\psi_x$. We write, for $\phi\in H^0(\text{End}(E)\otimes L)$, its \textit{characteristic coefficients}, the global sections $s_i = (-1)^i\operatorname{tr}(\wedge^i\phi)\in H^0(X, L^i)$. The definition of these sections follows from the definition below. (The trace formulae of $\phi$ are computed pointwise.)
$$\operatorname{tr}(\wedge^i\phi) := \frac{1}{i!}\begin{vmatrix}
    \operatorname{tr}(\phi) & & i-1 & & 0 & & \hdots\\
    \operatorname{tr}(\phi^2) & & \operatorname{tr}(\phi) & & i-2 & & \hdots\\
    \vdots & & \vdots & & \vdots & & \hdots\\
    \operatorname{tr}(\phi^{i-1}) & & \operatorname{tr}(\phi^{i-2}) & & \hdots & & 1\\
    \operatorname{tr}(\phi^i) & & \operatorname{tr}(\phi^{i-1}) & & \hdots & & \operatorname{tr}(\phi)
\end{vmatrix}.$$

The characteristic polynomial of a twisted pair $(E, \phi)$ is $\lambda^r + \sum_{i = 1}^r (-1)^i\operatorname{tr}(\wedge^i\phi)\cdot\lambda^{n - i}$. Replacing the indeterminate symbol $\lambda$ with $\phi$ we obtain a global section of $\text{End}(E)\otimes L^r$, namely $\phi^r + \sum_{i = 1}^r s_i\otimes \phi^{r - i}$. Via the Cayley-Hamilton theorem, we arrive at 
\begin{equation}\label{cal}
    \phi^r + \sum\limits_{i = 1}^r s_i\otimes \phi^{r - i} = 0.
\end{equation} In particular, the characteristic coefficients of the local sheaf homomorphism $\psi$ are encoded here: 

\begin{equation}\label{assoeq}
(-1)^i\operatorname{tr}(\wedge^i\psi) = (-1)^i\operatorname{tr}(\wedge^i\phi)\otimes \Lambda^i;~
\end{equation} The respective coefficients for the stalk-wise homomorphism $\psi_x$ are analogous: 

\begin{equation}\label{assoeqx}
(-1)^i\text{tr}(\wedge^i\psi_x) = (-1)^i\text{tr}(\wedge^i\phi_x)\otimes \Lambda_x^i.
\end{equation}

In restricting our attention to twisted semistable pairs, we access the special role of the characteristic polynomials.  Consider a pair of integers $r>0$ and $d$. The map  
\begin{equation}\label{Hitchmor}
H: \mathcal{M}(r, d, L)\to \bigoplus\limits_{i = 1}^r H^0(X, L^i)
\end{equation}
that maps a semistable pair $(E, \phi)$ to its tuple of characteristic coefficients is a distinguished function --- called the \emph{Hitchin morphism} --- whose properness can be established by appealing to a valuative criterion \cite{Nitin}. The affine codomain of the map is accordingly called the \textit{Hitchin base}. The spectral correspondence that we explore in further sections describes the fibers of $H$. Given a tuple of sections $s = (s_1,\dots , s_r)\in \bigoplus_{i = 1}^r H^0(X, L^i)$, we supply an elementary example of a pair whose characteristic polynomial is defined by $s$. The origin of our example is a basic linear algebra question, which is whether one can find a matrix that realizes a given characteristic polynomial and, furthermore, whether the matrix can be constructed in a uniform way.  The answer is the so-called ``companion matrix''.

\begin{example}\label{semistable}
Let $E = \mathcal{O}\oplus L^{-1}\oplus\dots \oplus L^{-(r - 1)}$. The bundle morphism $\phi$ that is the \textit{companion matrix} of $s = (s_1,\dots , s_r)\in \bigoplus_{i = 1}^r H^0(X, L^i)$ is\begin{equation}
    \phi = \begin{bmatrix}
0 & 0 &\dots &\dots & -s_r\\
1 & 0 &\dots &\dots  & -s_{r - 1}\\
0 & 1 & 0 &\dots  & -s_{r - 2}\\
\vdots & \vdots & \ddots &\ddots &\vdots\\
0 & 0 &\dots & 1 & -s_1
\end{bmatrix}.
\end{equation}
\end{example}

We may rapidly confirm the stability of such a pair in the case $r = 2$. Let $M$ be an invariant sub-line bundle of $E$. Then the holomorphic projection $\pi_1: M\to \mathcal{O}$ is a nonzero bundle map. Thus, $\deg(M^*\otimes L^{-1})\geq 0$ and $\mu(M) < \mu(E)$. For any $r$, stability of such a pair was proved for $g_X\geq 2$ and $L = K_X$ (cf. Remark 3.8 in \cite{Hausel2021VerySH}).

\section{Construction of spectral curves}
The following constructions are adopted from \cite{BNR}. Replacing a complex curve with a smooth, connected projective curve defined over a general field, we present the general construction of the spectral curves. Let $\Bbbk$ be an algebraically closed field of characteristic $0$ and $X$ is a smooth irreducible projective algebraic curve over $\Bbbk$. For any tuple sections $s = (s_1,\dots , s_n)$ as in Section \ref{annimor}, we construct a $1$-dimensional scheme $X_s$ embedded in the total space of line bundle $L$. We describe $X_s$ in two different ways interchangeably. The first definition is according to section 3 in \cite{BNR} which we modify into a convenient form. Recall that the tautological line bundle $\pi^*L$ over $L$ admits a tautological section $\eta$ defined by $\eta(y) = (y, y)\in \pi^*L$ for a point $y$ on any fiber of $L$. More rigorously, $\eta\in \mbox{Hom}_{\mathcal{O}_X}(\mathcal{L}^{-1}, \mbox{Sym}(\mathcal{L}^{-1}))$ is the canonical morphism, equivalently a global section of $\pi^*L$ by the adjunction formula of the pullback and the pushforward operations. The scheme $X_s$ is defined as the intersection of the polynomial $\lambda^n + {\scriptstyle{\sum_{i = 1}^n}} s_i \lambda^{n - i}$ with $\eta$, that is, 
\begin{equation}\label{spe}
X_s =\left\{y\in L: \eta^n(y) + \sum\limits_{i = 1}^n (\pi^*s_i\otimes\eta^{n - i})(y) = 0\right\}.  
\end{equation}
 The second construction is more abstract and decidedly more algebro-geometric in nature. Recall that $\mathcal{L}$ denotes the sheaf of sections of $L$. For each $i = 1,\dots , n$ there is a sheaf homomorphism defined by multiplication of section $s_i:\mathcal{O}(L^{-n}) = \mathcal{L}^{-n}\to \mathcal{L}^{-(n - i)} = \mathcal{O}(L^{-(n - i)})$. For the sake of completeness, we denote with $s_0$ the identity morphism on $\mathcal{L}^{-n}$. Taking the sum of these maps yields a sheaf homomorphism $\bigoplus_{i = 0}^n s_i:\mathcal{L}^{-n}\to \bigoplus_{i = 0}^n\mathcal{L}^{-i}\subset \text{Sym}(\mathcal{L}^{-1})$. Further, $\mathcal{I}$ denotes the ideal sheaf generated by the image of $\bigoplus_{i = 0}^n s_i$. The sheaf $\frac{\text{Sym}(\mathcal{L}^{-1})}{\mathcal{I}}$ over $X$ is a quasi-coherent sheaf of algebra. We define, as in Section 3 in \cite{BNR}:
\begin{equation}\label{Spec}
X_s = \text{Spec}\left(\frac{\text{Sym}(\mathcal{L}^{-1})}{\mathcal{I}}\right).
\end{equation}
In the standard language of the schemes, the definition in \ref{spe} describes the set of closed points of the scheme defined in definition \ref{Spec}. We observe that $X_s\subset \mbox{Spec}\left(\mathcal{L}^{-1}\right) = \mbox{Tot}(L)$ is a closed subscheme.
\begin{definition}
We refer to the scheme $X_s$ as the \textbf{spectral curve} associated to a point $s$ in the Hitchin base $\oplus_{i = 1}^n H^0(X, L^i)$. The restriction of the bundle map $\pi$ on $X_s$ is a finite morphism, called the \textit{spectral covering map} of $X$.
\end{definition}

At times, we also use the more general and dimension-insensitive terminology \emph{spectral cover}.

\begin{remark}
Over $\Bbbk = \mathbb{C}$, the collection of all tuples $s = (s_1,\dots , s_n)\in \bigoplus_{i = 1}^n H^0(X, L^i)$ for which the scheme $X_s$ is smooth is an open dense subset in case $L^n$ has no base points. Note that the set of spectral curves forms a complete linear system of divisors of $\pi^*L^n$ over $L$. By Bertini's theorem smooth divisors form a Zariski open subset in the projective completion of the linear system. On the other hand, the branch points of the finite morphism $\pi$ are given by the resultant of the polynomial $\lambda^n +\sum_{i = 1}^n s_i\lambda^{n - i}$ and its $\lambda$-derivative $n\lambda^{n - 1} +\sum_{i = 1}^{n - 1} (n - i)s_i\lambda^{n - i -1}$ which is a global section of $L^{n(n-1)}$. A point on $X$ is a branch point of $\pi$ if and only if it is a zero of the resultant. Away from such points $\pi$ is \'{e}tale of degree $n$. The set of sections $s = (s_1,\dots , s_n)$ such that underlying resultant section admits distinct zeros is Zariski open. The locus in the Hitchin base consisting of such spectral curves is called the \textit{smooth locus of spectral curves}. More specifically, we call a smooth, integral spectral curve a \textit{generic spectral curve}. In case of an integral, smooth spectral curve, the morphism $\pi: X_s\to X$ is finite (so proper) with the degree $r$. For the rest of the article we will assume that a spectral curve is smooth, thus a reduced scheme.
\end{remark}
    
For a trivializing neighbourhood $U$ of $L^{-1}$, we produce the sheaf of ideals restricted on $U$ as $\mathcal{I}|_U = \left < \sum_{i = 0}^na_i\Lambda^{n - i} \right >$. We recall that $\mbox{Sym}(\mathcal{L}^{-1}) \cong \bigoplus_{i = 0}^\infty \mathcal{L}^{-i}$ and furthermore $$\text{Sym}(\mathcal{L}^{-1})|_U = \mathcal{O}_X|_U[\Lambda] = \left\{\sum_{i = 0}^kf_i\Lambda^{k - i}: f_i\in\mathcal{O}_X|_U;~ k\geq 0\right\}.$$

Now, consider any two trivializing neighbourhoods $U$ and $V$ with local generators $\Lambda$ and $\upmu$ of $\mathcal{L}^{-1}$. Then $\Lambda = g_{VU}\upmu$ for some nonvanishing $\Bbbk^*$ valued function $g_{VU}$ on $U\cap V$. Restrictions of this equality also make sense on open subsets of $U\cap V$. Let us choose and fix a transition data between $\mathcal{I}|_U$ and $\mathcal{I}|_V$ (and between $\mbox{Sym}(\mathcal{L}^{-1})|_U$ and $\mbox{Sym}(\mathcal{L}^{-1})|_V$) such that the division algorithm is a sheaf homomorphism (recall that $\mathcal{O}_X$ is a sheaf of commutative rings with identity and the division algorithm under division by a monic polynomial is available at our service). By gluing the local sheaf homomorphisms we obtain a global sheaf homomorphism.
\begin{equation}\label{aniso}
    \frac{\mbox{Sym}(\mathcal{L}^{-1})}{\mathcal{I}} \cong \mathcal{O}\oplus\mathcal{L}^{-1}\oplus\dots \oplus \mathcal{L}^{-(n-1)}
\end{equation}
Let $\Lambda_x$ be the germ of a generator of the stalk of the sheaf $\mathcal{L}^{-1}$ at $x$. Then we have 

$$\mbox{Sym}(\mathcal{L}^{-1})_x = \left\{\sum\limits_{i = 0}^kf_i\Lambda_x^{k - i}: f_i\in \mathcal{O}_{X, x};~ k\geq 0\right\}.$$ In other words, $\mbox{Sym}(\mathcal{L}^{-1})_x = \mathcal{O}_{X, x}[\Lambda_x]$. Likewise, we have an explicit description of the germ of ideal $\mathcal{I}$ at $x$ as a principal ideal
\begin{equation}\label{imageideal}
    \mathcal{I}_x = \left<\sum\limits_{i = 0}^na_{i, x}\Lambda_x^{n - i}\right>.
\end{equation}

As per the definition of $X_s$, the finite morphism $\pi$ gives a sheaf isomorphism $\pi_*\mathcal{O}_{X_s}\cong \frac{\text{Sym}(\mathcal{L}^{-1})}{\mathcal{I}}$ (cf. p.128, Exercise 5.17 in \cite{Hart}) and the Euler characteristic of $\mathcal{O}_{X_s}$ is given as $$\chi(X_s, \mathcal{O}_{X_s}) = \chi(X, \pi_*\mathcal{O}_{X_s}) = \chi\left(X, \bigoplus\limits_{i = 0}^{n - 1} \mathcal{L}^{-i}\right).$$ From Riemann-Roch theorem this leads to \begin{equation}
    \chi(X_s, \mathcal{O}_{X_s}) = -\frac{n(n-1)}{2}\cdot\deg(L) + n(1 - g_X).
\end{equation}
We compute the genus of $X_s$ by a formula that appears, for instance in Section 3 in \cite{BNR}:
\begin{equation}\label{genus}
   g_{X_s} = 1 - \chi(X_s, \mathcal{O}_{X_s}) = \deg(L)\cdot\frac{n(n-1)}{2} + n(g_X - 1) + 1.
\end{equation}

\section{Spectral correspondence for generic spectral covers}\label{twist}
There is, of course, an intimate connection between the irreducibility of a spectral curve and the factorizability of its underlying defining polynomial. We assume here that a spectral curve is reduced. We observe that the Cauchy product of polynomial sections $\lambda^m +\sum_{i = 1}^m s_i\lambda^{m - i}$ and $\lambda^n +\sum_{j = 1}^n t_j\lambda^{n - j}$ is a polynomial section with coefficients in the Hitchin base $\bigoplus_{i = 1}^{m + n} H^0(X, L^i)$. A reduced spectral curve is an irreducible scheme exactly when its underlying spectral polynomial can not be factored into a product of spectral polynomials of smaller degrees (cf. \cite{Haupau} Lemma 2.4). A local description of the sheaf $\mathcal{I}\subset\mbox{Sym}(\mathcal{L}^{-1})$ by the polynomials in the local generator $\Lambda$ fully captures the features of an integral spectral curve.  We observe that the spectral curve $X_s$ is integral if and only if for each nonempty open subset $V$ of $X$ the ring $\frac{\mbox{Sym}(\mathcal{L}^{-1})}{\mathcal{I}}(V)$ is an integral domain. Furthermore, we restrict our attention to the open subsets of a trivializing neighbourhood: if $V$ is an open subset of a trivializing neighbourhood $U$ then the ring $\mathcal{I}(V)$ which is generated by a polynomial in $\Lambda(V)$ is irreducible over $\mathcal{O}_X(V)$, in case the spectral curve is integral. We pass the description to the level of stalks at individual points and summarize the explanation in the following proposition.
\begin{proposition}\label{integrals}
The scheme $X_s$ is integral if and only if the associated polynomial at each point of $X$ is irreducible over $\mathcal{O}_{X,x}$.  
\end{proposition}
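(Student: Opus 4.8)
The strategy is to show that each side of the asserted equivalence is in turn equivalent to the single statement that the associated polynomial is irreducible over the function field $K=\mathcal{O}_{X,\xi}$ of $X$, where $\xi$ denotes the generic point. Throughout, write $\mathcal{A}=\pi_*\mathcal{O}_{X_s}\cong\operatorname{Sym}(\mathcal{L}^{-1})/\mathcal{I}$ and, for a point $x\in X$, let $p_x(\lambda)=\lambda^n+\sum_{i=1}^n a_{i,x}\lambda^{n-i}\in\mathcal{O}_{X,x}[\lambda]$ be the associated polynomial. I would first note that irreducibility of $p_x$ over $\mathcal{O}_{X,x}$ is independent of the chosen trivialization: two generators $\Lambda,\upmu$ of the stalk $\mathcal{L}_x^{-1}$ satisfy $\Lambda=g\upmu$ for a unit $g\in\mathcal{O}_{X,x}^{*}$, and the two resulting associated polynomials differ by the variable rescaling $\lambda\mapsto g\lambda$ up to a unit factor; since this is a unit multiple of an $\mathcal{O}_{X,x}$-algebra automorphism of $\mathcal{O}_{X,x}[\lambda]$, it preserves irreducibility.

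For the first reduction, I would fix any $x\in X$ and observe that $\mathcal{O}_{X,x}$ is a local unique factorization domain with fraction field $K$ --- a discrete valuation ring when $x$ is a closed point, and $K$ itself when $x=\xi$. Since $p_x$ is monic it is primitive, so \emph{Gauss's Lemma} gives that $p_x$ is irreducible over $\mathcal{O}_{X,x}$ if and only if it is irreducible over $K$. Under the inclusion $\mathcal{O}_{X,x}\hookrightarrow K$ the coefficients $a_{i,x}$ map to the coefficients of the single polynomial $p_\xi\in K[\lambda]$; hence the condition at each individual $x$ is equivalent to irreducibility of $p_\xi$ over $K$. In particular all of the pointwise conditions coincide, so ``$p_x$ is irreducible over $\mathcal{O}_{X,x}$ for every $x\in X$'' is equivalent to ``$p_\xi$ is irreducible over $K$''.

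For the second reduction I would invoke \eqref{aniso}, which presents $\mathcal{A}$ as the locally free sheaf $\bigoplus_{i=0}^{n-1}\mathcal{L}^{-i}$; in particular $\mathcal{A}$ is a torsion-free $\mathcal{O}_X$-module. For any nonempty open $V\subseteq X$ the ring $\mathcal{O}_X(V)$ is an integral domain with fraction field $K$, and $\mathcal{A}(V)$ is a torsion-free $\mathcal{O}_X(V)$-module, so the localization map $\mathcal{A}(V)\hookrightarrow\mathcal{A}(V)\otimes_{\mathcal{O}_X(V)}K=\mathcal{A}_\xi$ is injective; fixing a trivializing neighbourhood identifies $\mathcal{A}_\xi$ with $K[\lambda]/(p_\xi)$. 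If $p_\xi$ is irreducible over $K$, then $K[\lambda]/(p_\xi)$ is a field, so every $\mathcal{A}(V)$ embeds into a field and is therefore an integral domain; by the open-set criterion recorded just before the statement, $X_s$ is integral. Conversely, if $X_s$ is integral then each $\mathcal{A}(V)$ is a domain, and $\mathcal{A}_\xi$, being a localization of the domain $\mathcal{A}(V)$, is again a domain, whence $p_\xi$ is irreducible over $K$. Combining the two reductions proves the proposition.

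The step I expect to be most delicate is the first reduction, since irreducibility is genuinely sensitive to the coefficient ring and the section rings $\mathcal{O}_X(V)$ appearing in the open-set criterion are typically not unique factorization domains; one might fear that demanding irreducibility over the valuation ring $\mathcal{O}_{X,x}$ at \emph{every} closed point is strictly stronger than generic irreducibility. The resolution is that the associated polynomial is monic, hence primitive, so Gauss's Lemma applied over each (local UFD) stalk collapses every one of these conditions to the single field-theoretic statement over the common fraction field $K$.
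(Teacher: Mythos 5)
Your proof is correct, and while it rests on the same skeleton as the paper's argument, it takes a genuinely sharper route through the generic point. The paper's justification is the paragraph preceding the proposition: integrality of $X_s$ is equivalent to every section ring $\bigl(\operatorname{Sym}(\mathcal{L}^{-1})/\mathcal{I}\bigr)(V)$ being a domain, on opens inside a trivializing neighbourhood the ideal is generated by the associated polynomial, and one then ``passes the description to the level of stalks'' --- without spelling out why irreducibility over the rings $\mathcal{O}_X(V)$ (which, as you note, need not be UFDs), over the stalks $\mathcal{O}_{X,x}$, and at every point simultaneously are interchangeable; the relevant UFD facts are only recorded afterwards, as Theorems \ref{ufd1} and \ref{ufd2}, and are deployed later for the discussion of invariant subbundles. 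Your decomposition supplies exactly what that sketch leaves implicit: you pivot both sides of the equivalence through the single condition that $p_\xi$ is irreducible over $K=\mathcal{O}_{X,\xi}$, using Gauss's lemma (monic, hence primitive, over the local UFDs $\mathcal{O}_{X,x}$) to collapse all the pointwise conditions to one field-theoretic statement, and using the local freeness of $\operatorname{Sym}(\mathcal{L}^{-1})/\mathcal{I}\cong\bigoplus_{i=0}^{n-1}\mathcal{L}^{-i}$ from \eqref{aniso} to embed every section ring into the generic stalk $K[\lambda]/(p_\xi)$; this buys a proof in which each implication is checked against a single Artinian $K$-algebra rather than against all opens and all stalks at once. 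Your preliminary observation that irreducibility of $p_x$ is insensitive to the choice of generator (via $p^{\Lambda}(\lambda)=g^{n}\,p^{\upmu}(g^{-1}\lambda)$ for $\Lambda=g\upmu$ with $g$ a unit) also makes the statement of the proposition well posed, a point the paper does not address. One microscopic caveat: the identification $\mathcal{A}(V)\otimes_{\mathcal{O}_X(V)}K\cong\mathcal{A}_\xi$ is literally valid for affine $V$, hence for every proper open subset of the projective curve $X$; for $V=X$ one should first restrict along the injection $\mathcal{A}(X)\hookrightarrow\mathcal{A}(V)$ into a smaller affine open, which the torsion-freeness you invoke already provides, so the argument is unaffected.
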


\begin{remark}
Above Proposition \ref{integrals} supplies an argument for the fact that the Krull dimension of a smooth integral spectral curve is $1$. We observe this fact by computing the Krull dimension of $\frac{\mbox{Sym}(\mathcal{L}_x^{-1})}{\mathcal{I}_x}$ at each point $x\in X$. We denote the local generator (which we have already introduced) with $\Lambda$ and realize that the ring $\mathcal{O}_{X, x}[\Lambda_x]$ has Krull dimension $2$. This statement follows from Hilbert's basis theorem that states that a polynomial ring over a Noetherian ring is Noetherian. The Krull dimension decreases by $1$ after taking quotient by the prime ideal $<\sum_{i = 0}^n a_{i,x}\Lambda_x^{n - i}>$. We finally implement Krull's height theorem which suggests that a principal prime ideal of a Noetherian ring has height $1$.
\end{remark}

Our next explanations, which set the stage for a higher-rank spectral correspondence, will rely on the linear algebra of stalks at individual points. We briefly remind the reader about two relevant properties of unique factorization domains and their quotient fields of fractions. Let $R$ be a unique factorization domain and $F$ be its field of fractions.

\begin{theorem}\label{ufd1}
    Let $f\in R[x]$ be a primitive polynomial and $g\in R[x]$. Then $f$ divides $g$ in $F[x]$ if and only if $f$ divides $g$ in $R[x]$.
\end{theorem}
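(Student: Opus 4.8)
The plan is to prove the nontrivial (forward) implication via the theory of contents and Gauss's Lemma; the reverse implication is immediate, since $R[x]\subseteq F[x]$ means any factorization $g=fh$ with $h\in R[x]$ is \emph{a fortiori} a factorization over $F$. So the whole content of the statement is that divisibility by a \emph{primitive} polynomial cannot be created by passing to the larger ring $F[x]$.

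For the forward direction I would first recall the \emph{content} $c(p)$ of a nonzero $p\in R[x]$, namely a greatest common divisor of its coefficients (well defined up to a unit of $R$ since $R$ is a UFD), and call $p$ primitive precisely when $c(p)$ is a unit. The key structural input is \textbf{Gauss's Lemma}: the product of two primitive polynomials in $R[x]$ is again primitive. I expect this to be the main obstacle of the argument, and I would establish it by reduction modulo a prime. If $fh$ failed to be primitive, some irreducible $p\in R$ would divide every coefficient of $fh$; reducing modulo $p$ into $(R/(p))[x]$ — an integral domain because $p$ is prime in the UFD $R$ — would yield $\bar f\,\bar h=0$ while $\bar f\neq 0\neq\bar h$ by primitivity of $f$ and $h$, contradicting that $(R/(p))[x]$ is a domain.

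With Gauss's Lemma available, I would extend the content to all of $F[x]$: every nonzero $h\in F[x]$ factors as $h=\gamma(h)\,h_0$ with $\gamma(h)\in F^{\times}$ and $h_0\in R[x]$ primitive, where $\gamma(h)$ is determined up to a unit of $R$ (clear denominators, then factor out the content of the resulting polynomial in $R[x]$). Gauss's Lemma upgrades at once to multiplicativity $\gamma(pq)=\gamma(p)\gamma(q)$ on $F[x]$, up to units, and for any $p\in R[x]$ one recovers $\gamma(p)=c(p)\in R$.

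Finally, suppose $f\mid g$ in $F[x]$, say $g=fh$ with $h\in F[x]$. Taking contents gives $\gamma(g)=\gamma(f)\gamma(h)$ up to a unit; since $f$ is primitive, $\gamma(f)$ is a unit, so $\gamma(h)=\gamma(g)=c(g)\in R$, again up to a unit of $R$. Writing $h=\gamma(h)\,h_0$ with $h_0\in R[x]$ then exhibits $h$ as a product of an element of $R$ with a polynomial in $R[x]$, so $h\in R[x]$. Hence $g=fh$ is a factorization already in $R[x]$, and $f\mid g$ in $R[x]$, as desired.
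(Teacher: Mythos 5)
Your proof is correct, and it supplies exactly the standard content/Gauss's Lemma argument that the paper implicitly relies on: the paper states Theorem \ref{ufd1} without proof, as a recalled property of unique factorization domains, and later invokes Gauss's Lemma in precisely this role (``the divisibility over $K$ descends to the divisibility over UFD $\mathcal{O}_{X,x}$''). So your argument matches the paper's intended justification, with all steps --- the reduction-mod-a-prime proof of Gauss's Lemma, the extension of content to $F[x]$, and the conclusion that the cofactor $h$ lies in $R[x]$ --- carried out correctly.
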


\begin{theorem}\label{ufd2}
    Let $f\in R[x]$ be a polynomial of degree $n\geq 1$. Then $f$ is a product of two polynomials in $F[x]$ of degrees $d$ and $e$ respectively with $0< d, e < n$ if and only if there exist polynomials $g, h\in R[x]$ of degrees $d$ and $e$ respectively with $0< d, e < n$ such that $f = g\cdot h$.
\end{theorem}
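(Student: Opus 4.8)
The plan is to prove the nontrivial implication directly from Theorem \ref{ufd1}, reserving Gauss-type content arguments only for the preliminary step of passing from $F[x]$ down to $R[x]$. The reverse direction is immediate: if $f = g\cdot h$ with $g,h\in R[x]$ of degrees $d$ and $e$, then the same identity holds in $F[x]$ since $R[x]\subseteq F[x]$, and the degrees are unchanged. So the real work lies entirely in the forward direction.

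For the forward direction, I would start from a factorization $f = g_0 h_0$ in $F[x]$ with $\deg g_0 = d$ and $\deg h_0 = e$, where $0 < d, e < n$. The first step is to replace $g_0$ by a primitive polynomial over $R$ of the same degree. Since $R$ is a UFD, every nonzero element of $F[x]$ can be written as a scalar from $F^*$ times a primitive polynomial in $R[x]$: I clear denominators by multiplying $g_0$ by a common denominator $b\in R$ to land in $R[x]$, and then factor out the content to obtain $b g_0 = c\,g_1$ with $g_1\in R[x]$ primitive and $c\in R$. Setting $\alpha = c/b\in F^*$ yields $g_0 = \alpha g_1$, and $\deg g_1 = \deg g_0 = d$ because $\alpha$ is a nonzero scalar.

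Next I would observe that $g_1$ already divides $f$ in $F[x]$: from $f = \alpha g_1 h_0 = g_1\,(\alpha h_0)$ with $\alpha h_0\in F[x]$, the primitive polynomial $g_1$ is an $F[x]$-divisor of $f\in R[x]$. This is precisely the hypothesis of Theorem \ref{ufd1}, which upgrades the divisibility to $R[x]$: there exists $h\in R[x]$ with $f = g_1 h$. Taking $g := g_1$ then gives a factorization of $f$ over $R$.

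It remains only to track degrees: since $\deg g = \deg g_1 = d$ and $f = gh$ has degree $n$, we get $\deg h = n - d = e$, and both $d$ and $e$ lie in the prescribed range $0 < d, e < n$ by hypothesis. The main subtlety to handle carefully is the primitive-decomposition step, where one must invoke the UFD structure of $R$ (existence of contents, and Gauss's lemma guaranteeing that the extracted $g_1$ is genuinely primitive); once that is in place, Theorem \ref{ufd1} does all the remaining work and no further computation is needed.
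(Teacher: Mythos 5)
Your proof is correct. Note that the paper states Theorem \ref{ufd2} without proof, presenting it (together with Theorem \ref{ufd1}) as a standard reminder about UFDs, so there is no in-text argument to compare against; your reduction --- extract a primitive part $g_1$ of one $F[x]$-factor via content decomposition, then invoke Theorem \ref{ufd1} to promote the divisibility $g_1 \mid f$ from $F[x]$ to $R[x]$, and match degrees --- is exactly the standard route the paper implicitly relies on. One small attribution quibble: primitivity of the extracted $g_1$ follows directly from the existence of gcds in a UFD (dividing out the content leaves coefficients with unit gcd) and does not itself require Gauss's lemma; Gauss's lemma is rather what underlies Theorem \ref{ufd1}, which you correctly treat as the engine of the argument.
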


Now, let $(E, \phi)$ be an $L$-twisted pair on $X$ that admits $\lambda^r +\sum_{i = 1}^r s_i\lambda^{r - i}$ as its characteristic polynomial, defined by $s$. As usual, the stalks and the germs are defined with respect to a trivializing neighbourhood. The module $\mathcal{O}(E)_x$ is free over $\mathcal{O}_{X,x}$ of rank $r$ and the module homomorphism $\psi_x$ extends linearly on the $K$-vector space (of same rank) $V(x) = K\otimes_{\mathcal{O}_{X,x}} \mathcal{O}(E)_x$, where $K$ is the quotient field of $\mathcal{O}_{X,x}$. Here $K$ is isomorphic to the function field of the nonsingular curve $X$. We view the characteristic polynomials of the module homomorphisms of at the level of stalks same as the respective characteristic polynomials of respective $K$-endomorphisms on $V(x)$. If there is a proper invariant subbundle of $(E, \phi)$ then at each point $x$, the characteristic polynomial of the corresponding germ $\psi_x$ is divisible by the characteristic polynomial of the germ, say $\psi_x'$, contributed by the invariant subbundle where the coefficients of the quotient polynomial are contributed by the quotient field $K$. By the Gauss' lemma, the divisibility over $K$ descends to the divisibility over UFD $\mathcal{O}_{X,x}$. This polynomial $\psi_x'$ has a strictly smaller degree over $\mathcal{O}_{X, x}$. Thus, we arrive at a contradiction that $X_s$ is integral as in Proposition \ref{integrals}. In fact, we can pose a more explicit factorization. In this formulation, the characteristic polynomial of $(E, \phi)$ is written as the product of the characteristic polynomials of $(F, \phi_{|F})$ and the induced quotient pair $(E/F, \phi_{E/F})$. Thus, we have the following conclusion.
\begin{remark}\label{invariant}
If $X_s$ is an integral scheme then a twisted pair $(E, \phi)$ with characteristic polynomial defined by $s$ does not admit any nontrivial proper invariant subbundle $(F, \phi_{|F})$, so automatically stable.
\end{remark}

We are now in the position to prove a categorical equivalence between torsion-free sheaves over $X_s$ and $\mathcal{O}_X$-locally free $\frac{\text{Sym}(\mathcal{L}^{-1})}{\mathcal{I}}$-modules over $X$. This is also framed as a one-to-one correspondence between isomorphism classes of vector bundles over the reduced spectral curve and isomorphism classes of twisted pairs over $X$. Consider a smooth and integral spectral cover $X_s$. We denote the corresponding spectral polynomial by $p_s$, which we will simply refer to as $p$ in case the choice of $s$ is obvious from the context. The associated polynomials are $p', p'_x$ while the latter is irreducible over the UFD $\mathcal{O}_{X, x}$ and thus irreducible over the function field $K$.\\

We begin with a locally free sheaf $M$ of rank $n$ with the multiplication operation by the tautological section $\eta: M\to M\otimes\pi^*L$. This is pushed forward to a $L$-twisted Hitchin pair $(\pi_*M, \pi_*\eta)$ on $X$ using projection formula. On the spectral curve, section $\eta^r + \sum_{i = 1}^r \pi^*s_i\otimes\eta^{r - i}$ vanishes, thus the pair $(\pi_*M, \pi_*\eta)$ of rank $nr$ satisfies $p$ under pushforward operation $\pi_*$ by the \textit{adjoint} operation of pulling back sections (p.33, Proposition 4.2 in \cite{bre}). This is same as giving $\pi_*\mathcal{O}_{X_s}$-structure on $\pi_*M$. So $p$ is an annihilating polynomial of $\pi_*\eta$. This establishes one of the two sides of the correspondence presented in form of Theorem \ref{extension}. Moreover, we obtain $p^n$ as the characteristic polynomial of the pair.\\

At a point $x$, the element $\psi_x$ satisfies $p'_x$. Consider the minimal polynomial $g$ of $\psi_x$ over $K$. Then $g$ divides $p'_x$ over $K$. If $\deg(g)<\deg(p'_x)$, then $p'_x$ is reducible over $K$ which contradicts the case. Thus, $\deg(g) = \deg(p'_x)$ and monic polynomial $p'_x$ is the minimal polynomial over $K$. Recall that the irreducible factors of the minimal polynomial and the characteristic polynomial of an endomorphism (on a finite dimensional vector space) coincide. It follows that $p_x'^n$ is the characteristic polynomial of the $K$-linear map $\psi_x$. (Implicitly, we have that $p_x'^n$ is the characteristic polynomial of $\psi_x$ over $\mathcal{O}_{X, x}$.) Denoting the coefficients of $p^n$ by $S_i$'s and the coefficients of $p_x'^n$ by $A_i$'s, we have $A_{i} = (-1)^i\text{tr}(\wedge^i\psi_x)$. Here, the associated polynomials of $p^n$ are $p'^n$. Thus $A_i = S_{i, x}\otimes\Lambda_x^i$. Appealing to formula \ref{assoeqx}, we have $S_{i, x} = (-1)^i\text{tr}(\wedge^i\phi_x)$ and this holds for any $x\in X$. This leads to the conclusion that $S_i = (-1)^i\text{tr}(\wedge^i\phi)$, so $p^n$ is the characteristic polynomial of $\phi$.\\

On the other hand, let $p$ be an annihilating polynomial of $(E,\phi)$. Invoking the arguments of the previous paragraph, we may conclude that the rank of $E$ is divisible by $r = \deg(p)$. Furthermore, if the rank of $E$ is $nr$ for some $n\geq1$, then the characteristic polynomial of $(E,\phi)$ must be $p^n$. Finally, if $F$ is an invariant subbundle of such a pair $(E,\phi)$, so that $(F,\phi|_F)$ is a twisted pair in its own right, then we note that the characteristic polynomial of $(F,\phi|_F)$ is $p^k$ for some $k\leq n$.\\

Now, let us consider a trivializing neighbourhood $U$ of $L^{-1}$ and an action on $\mathcal{O}(E)|_U$ given as $$\alpha|_U: \mbox{Sym}(\mathcal{L}^{-1})|_U\to \mathcal{O}(\mbox{End}(E))|_U\cong \mbox{End}(\mathcal{O}(E))|_U$$ with $q\mapsto q(\psi)$. Here, for each open subset $V$ of $U$, we treat $\psi$ as a sheaf homomorphism on $\mathcal{O}(E)|_V$. We claim that $\ker(\alpha|_U) = \mathcal{I}|_U$. If $f\in \mathcal{I}|_U(V)$ then we have $f(\psi) = 0$. So $f\in \ker(\alpha|_U)(V)$. Now consider $f\in \ker(\alpha|_U)(V)$. Let $x\in V$ be a point. Taking the germs at $x\in V$ we have $f_x(\psi_x) = 0$. The minimal polynomial of $\psi_x$ is $p'_x$ as we regard $\psi_x$ as a linear map over $K$. Then $f_x$ is divisible by $p'_x$ over $K$ and over $\mathcal{O}_{X, x}$ because $p'_x$ is monic, so primitive. Then we use the division algorithm. We divide $f$ by the restriction of $p'$ defined on $V$, as elements of $\mathcal{O}_X(V)[\Lambda_V]$. There are unique elements $g$ and $h$ such that $f = p'.g + h$ and $\deg(h) < \deg(p')$. But, for each $x\in V$ there is a neighbourhood $W_x$ such that $f$ is divided by the polynomial $p'$ over $W_x$ due to the divisibility of the germs. From the uniqueness feature of the division algorithm over $\mathcal{O}_{X, x}$ we have the germ $h_x = 0$. This is true for each $x\in X$, thus $h(x) = 0$ for all $x\in V$. Thus, we have $h = 0$ on $V$. Thus $f\in \mathcal{I}$ and $\ker(\alpha|_U) = \mathcal{I}|_U$ from the set theoretic equality on each open subset $V$ of $U$. Finally, let $f_1 - f_2\in \ker(\alpha|_U)(V) = \mathcal{I}|_U(V)$. The action of $f_1$ and $f_2$ on $\psi$ is invariant. It confirms that there is a well-defined action $\alpha|_U$ by $\left(\mbox{Sym}(\mathcal{L}^{-1})/\mathcal{I}\right)|_U$ on $\mbox{End}(\mathcal{O}(E))|_U$. The action is compatible over all the trivializing neighbourhoods of $L^{-1}$ over $X$ and defines a global action on $\mathcal{O}(E)$ by $$\alpha: \frac{\text{Sym}(\mathcal{L}^{-1})}{\mathcal{I}}\to \text{End}(\mathcal{O}(E)).$$

The main ingredient of the spectral correspondence is the following categorical isomorphism induced via the pushforward morphism $\pi_*$ (p.128, Exercise 5.17 in \cite{Hart}).
\begin{remark}\label{corres}
   The pushforward operation $\pi_*$ defines a categorical equivalence between the category of quasi-coherent sheaves of $\mathcal O_{X_s}$-modules and the category of $\mathcal O_X$-quasi-coherent sheaves that admit a $\pi_*\mathcal O_{X_s}$-module structure. In particular, the isomorphism classes of quasi-coherent sheaves of $\mathcal O_{X_s}$-modules on $X_s$ are in correspondence with the isomorphism classes of $\pi_*\mathcal O_{X_s}$-modules on $X$.
\end{remark}

We now observe that twisted pairs on $X$ admit $\pi_*\mathcal O_{X_s}$-module structures so that their underlying locally free sheaves $E$ are pushforward sheaves of locally free sheaves on $X_s$. In particular, $\mathcal{O}(E)$ can be written as the pushforward of a sheaf of sections of a vector bundle $M$ over $X_s$ of rank $n$. Suppose that $p$ annihilates $(E_1, \phi_1)$ and $(E_2, \phi_2)$. Then $(E_1, \phi_1)\cong (E_2, \phi_2)$ if and only if $E_1$ and $E_2$ are isomorphic as $\mbox{Sym}(L^{-1})/\mathcal I$-modules. The injectivity of the correspondence that is inherent in \ref{corres} can now be rephrased in the following way: the locally free sheaf $M$ is obtained from the $\pi_*\mathcal{O}_{X_s} = \mbox{Sym}(\mathcal{L}^{-1})/\mathcal{I}$-module structure induced by $\pi_*\eta$ on $\pi_*M$. In the same spirit, we denote by $M$ the associated unique vector bundle over $X_s$.\\

We conclude that $\pi_*(M, \eta) = (E, \phi)$. Let $\mathcal{F} = \mathcal{O}(\pi^*L)$. Then there is a multiplication map $-\eta:\mathcal{F}^{-1}\to \mathcal{O}_{X_s}$ and the sheaf homomorphism $1\oplus (-\eta):\mathcal{F}^{-1}\to \mathcal{F}^{-1}\oplus\mathcal{O}_{X_s}$ defines an ideal $\mathcal{G}\subset \mbox{Sym}(\mathcal{F}^{-1})$. By implementing the division algorithm on the restricted sheaf over a trivializing neighbourhood we obtain $\frac{\mbox{Sym}(\mathcal{F}^{-1})}{\mathcal{G}}\cong \mathcal{O}_{X_s}$. Hence, the $\mathcal{O}_{X_s}$-locally free sheaf structure on $M$ is isomorphic to the structure induced by $\frac{\mbox{Sym}(\mathcal{F}^{-1})}{\mathcal{G}}$ via the algebra morphism $\frac{\mbox{Sym}(\mathcal{F}^{-1})}{\mathcal{G}}\to \mbox{End}(M)$ defined as $q\mapsto q(\eta.I_M)$, where $I_M$ stands for the identity moprhism on $M$. (Indeed, any scalar multiple of the identity morphism is annihilated by a linear polynomial.) That $M\cong M'$ if and only if $(M, \eta) \cong (M', \eta)$ supports the fact that the action by the tautological section $\eta$ on the sheaf $M$ does not change the $\mathcal O_{X_s}$ module structure of $M$. Also, $\eta$ satisfies, as a bundle morphism, Equation \ref{spe} which defines $X_s$. So, a $\frac{\text{Sym}(\mathcal{L}^{-1})}{\mathcal{I}}$-structure on $\mathcal{O}(E)$ is obtained as pushforward of a $\frac{\text{Sym}(\mathcal{F}^{-1})}{\mathcal{G}}$-structure of $M$ --- that is, $\pi_*(M, \eta) = (E, \phi)$. We coalesce this whole discussion into the following theorem.\\

\begin{theorem}\label{extension}
Let $X_s$ be a nonsingular, integral spectral curve over $X$ with finite (so proper) covering map $\pi$. Then there is a one-to-one correspondence between isomorphism classes of vector bundles $M$ of a finite rank over $X_s$ and $L$-twisted Hitchin pairs $(E, \phi)$ over $X$ annihilated by $p_s$. The correspondence is given by $(M, \eta)\mapsto (\pi_*M, \pi_*\eta)$ using the projection formula $$\pi_*(M\otimes \pi^*L)\cong \pi_*M\otimes L.$$
\end{theorem}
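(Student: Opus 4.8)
The plan is to identify an $L$-twisted pair $(E,\phi)$ annihilated by $p_s$ with an $\mathcal{O}_{X_s}$-module structure on its underlying sheaf, and then transport this identification across the categorical equivalence of Remark \ref{corres}. First I would treat the assignment $(M,\eta)\mapsto(\pi_*M,\pi_*\eta)$. Given a vector bundle $M$ on $X_s$, multiplication by the tautological section $\eta$ furnishes a morphism $M\to M\otimes\pi^*L$; pushing forward and applying the projection formula $\pi_*(M\otimes\pi^*L)\cong\pi_*M\otimes L$ produces an endomorphism $\phi:=\pi_*\eta$ of $E:=\pi_*M$ valued in $L$. Because $\eta$ satisfies the defining relation \eqref{spe} of $X_s$ on the nose, and because pushing forward sections is adjoint to pulling them back, the section $\phi^r+\sum_{i=1}^r s_i\otimes\phi^{r-i}$ is the pushforward of a vanishing section and hence vanishes; thus $(E,\phi)$ is annihilated by $p_s$. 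Finiteness of $\pi$ guarantees that $E$ is coherent and locally free, so $(E,\phi)$ is a genuine $L$-twisted pair.

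For the reverse assignment I would start from a pair $(E,\phi)$ with $p_s(\phi)=0$. The action $\alpha\colon \mathrm{Sym}(\mathcal L^{-1})/\mathcal I\to\mathrm{End}(\mathcal O(E))$, $q\mapsto q(\psi)$, constructed above upgrades the $\mathcal O_X$-module $\mathcal O(E)$ to a $\pi_*\mathcal O_{X_s}\cong\mathrm{Sym}(\mathcal L^{-1})/\mathcal I$-module; the computation $\ker(\alpha|_U)=\mathcal I|_U$ is exactly what makes this structure well defined. Remark \ref{corres} then supplies a unique quasi-coherent $\mathcal O_{X_s}$-module $M$ with $\pi_*M\cong E$, and the tautological section recovers the twisting datum. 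To see that $M$ is a vector bundle of finite rank, note that $M$ is coherent because $\pi$ is affine and $E$ is coherent, and that $M$ is torsion-free because any torsion subsheaf would push forward to torsion in the locally free sheaf $E$; since $X_s$ is smooth, hence regular of dimension one, a torsion-free coherent sheaf there is automatically locally free.

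Finally I would assemble the bijection on isomorphism classes. The two constructions are mutually inverse: the $\pi_*\mathcal O_{X_s}$-module structure produced by $\phi$ is, by the injectivity built into Remark \ref{corres}, exactly the one induced by multiplication by $\eta$ on $M$, and conversely $\pi_*\eta=\phi$. Hence $(M,\eta)\cong(M',\eta')$ precisely when $(\pi_*M,\pi_*\eta)\cong(\pi_*M',\pi_*\eta')$, so the correspondence descends to isomorphism classes. I expect the main obstacle to be the faithful dictionary between ``an endomorphism $\phi$ annihilated by $p_s$'' and ``a $\pi_*\mathcal O_{X_s}$-module structure'': concretely, verifying $\ker\alpha=\mathcal I$ at the level of stalks, where one must pass from divisibility of germs over the function field $K$ down to divisibility over the unique factorization domain $\mathcal O_{X,x}$ (Theorems \ref{ufd1} and \ref{ufd2}) and then invoke uniqueness in the division algorithm. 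The integrality of $X_s$, via Proposition \ref{integrals} and Remark \ref{invariant}, is what forces $p_s'$ to be the minimal polynomial of $\psi_x$ at each stalk and thereby pins down the module structure uniquely.
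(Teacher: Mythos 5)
Your proposal is correct and follows essentially the same route as the paper: the forward direction via pushforward, the projection formula, and the adjunction argument showing $\eta$'s vanishing relation descends to $p_s(\pi_*\eta)=0$; the reverse direction via the action $\alpha\colon \mathrm{Sym}(\mathcal{L}^{-1})/\mathcal{I}\to \mathrm{End}(\mathcal{O}(E))$ with the stalkwise verification $\ker\alpha=\mathcal{I}$ (minimality of $p'_x$ from integrality, Gauss's lemma to descend divisibility from $K$ to $\mathcal{O}_{X,x}$, and uniqueness in the division algorithm), all fed through the categorical equivalence of Remark \ref{corres}. The one detail you make explicit that the paper leaves implicit is the local freeness of $M$, which you settle correctly by noting coherence under the affine morphism $\pi$, torsion-freeness (torsion would push forward into the locally free $E$), and regularity of the one-dimensional $X_s$.
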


Let $F\subset E$ be an invariant subbundle. Write $E = \pi_*M$ and $F = \pi_*N$ where $N\subset M$ as a locally free subsheaf. To show that $M/N$ is locally free we use an $\mathcal{O}_X$-isomorphism $\pi_*(M/N)\cong\pi_*M/\pi_*N = E/F$. The latter being locally free, $\pi_*(M/N)$ is locally free so a coherent sheaf $M/N$ is indeed locally free due to the above correspondence. We mention this fact in the following remark.
\begin{remark}\label{subbunpair}
    The correspondence preserves subbundles of bundles on $X_s$ and invariant twisted subbundles of twisted pairs on $X$. 
 \end{remark}  
    
\begin{remark}\label{spec1}
In the classical case of $n = 1$ in Theorem \ref{extension}, $p$ is the characteristic polynomial of the underlying pairs on $X$. 
\end{remark}

As is well known, the generic fiber of the Hitchin morphism (\ref{Hitchmor}) for a fixed rank and degree is the Jacobian of the spectral curve $X_s$. This corresponds with the case $n = 1$ in \ref{extension}. In Hitchin's work, we see that the locally free sheaf $M$, for any value of $n$, is explicitly determined as $\ker(\eta\cdot I-\pi^*\phi)\otimes \mathcal{L}'$ for a fixed invertible sheaf $\mathcal{L}'$, cf. Proposition 5.17 \cite{Hausel2021VerySH}. In case $n\geq 2$ we should not directly use the term ``fiber'' (because we can not duplicate the Hitchin morphism with a morphism which may capture the coefficients of an annihilating polynomial); rather, we observe that the space of S-equivalence classes of $L$-twisted semistable pairs of a given rank and a degree which are annihilated by $p$ is a scheme for a generic choice of $p$.  This scheme is represented, in the case of $g_{X_s} > 1$, by the moduli space of $S$-equivalence classes of the semistable bundles on $X_s$ for a fixed rank and a fixed degree. It has the structure of an irreducible projective algebraic variety that contains the moduli space of isomorphism classes of stable bundles as an open smooth subvariety.

\begin{corollary}\label{extsemi}
In Theorem \ref{extension}, $M$ is a stable (resp. semistable) bundle on $X_s$ if and only if $(E, \phi)$ is a stable (resp. semistable) $L$-twisted pair on $X$.
\end{corollary}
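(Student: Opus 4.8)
The plan is to prove the statement by showing that the correspondence $(M,\eta)\leftrightarrow(\pi_*M,\pi_*\eta)$ of Theorem \ref{extension} induces a slope-preserving bijection between subobjects on the two sides, so that the stability inequality (\ref{slope}) is transported faithfully. The central observation is that, by Remark \ref{subbunpair}, proper subbundles $N\subset M$ on $X_s$ correspond exactly to proper $\phi$-invariant subbundles $F=\pi_*N\subset E=\pi_*M$ on $X$, and this correspondence is inclusion-preserving with $M/N\leftrightarrow E/F$. Thus it suffices to compare $\mu(N)$ with $\mu(M)$ on one side and $\mu(\pi_*N)$ with $\mu(\pi_*M)$ on the other, and to verify that the direction of the slope inequality is preserved under $\pi_*$.

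The key computational step is therefore a comparison of slopes across the finite morphism $\pi\colon X_s\to X$ of degree $r$. First I would compute $\operatorname{rank}(\pi_*N)=r\cdot\operatorname{rank}(N)$, since pushing forward along a degree-$r$ finite map multiplies the rank by $r$ (the generic fiber has $r$ points). Next, using the Grothendieck--Riemann--Roch formula, or equivalently the Euler characteristic identity $\chi(X,\pi_*N)=\chi(X_s,N)$ together with Riemann--Roch on each curve, I would extract the relation
\begin{equation}
\deg(\pi_*N)=\deg(N)+\operatorname{rank}(N)\cdot\bigl(\deg(\pi_*\mathcal{O}_{X_s})\bigr),
\end{equation}
where $\deg(\pi_*\mathcal{O}_{X_s})$ is a fixed constant depending only on $X_s$ and $\pi$ (computable from the decomposition (\ref{aniso}) as $-\tfrac{n(n-1)}{2}\deg(L)$ in the notation of the genus computation, but what matters is that it is independent of $N$). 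Combining these two identities yields
\begin{equation}
\mu(\pi_*N)=\frac{\deg(N)+\operatorname{rank}(N)\cdot c}{r\cdot\operatorname{rank}(N)}=\frac{1}{r}\mu(N)+\frac{c}{r},
\end{equation}
with $c=\deg(\pi_*\mathcal{O}_{X_s})$ a constant. The upshot is that $\mu\circ\pi_*$ is an increasing affine function of $\mu$ on $X_s$, with slope $1/r>0$ and the same additive constant $c/r$ for every subsheaf, including $M$ itself.

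Because this affine transformation $\mu\mapsto \tfrac{1}{r}\mu+\tfrac{c}{r}$ is strictly monotone increasing and applies uniformly to $N$ and to $M$, the inequality $\mu(N)<\mu(M)$ holds if and only if $\mu(\pi_*N)<\mu(\pi_*M)$, and likewise with $\leq$. Quantifying over all proper nontrivial subbundles $N\subset M$ on one side and all proper nontrivial $\phi$-invariant subbundles $F\subset E$ on the other---which are in bijection by Remark \ref{subbunpair}---gives the equivalence of stability (resp.\ semistability) of $M$ and of $(E,\phi)$, completing the proof.

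I expect the main obstacle to be the careful justification of the degree formula for $\pi_*N$, i.e.\ isolating the additive constant $c$ and confirming that it is genuinely independent of $N$. One must be slightly careful that $N$ need not be a line bundle, so the cleanest route is the Euler-characteristic identity $\chi(X_s,N)=\chi(X,\pi_*N)$ (valid since $\pi$ is finite, hence affine, so $R^{i}\pi_*N=0$ for $i>0$) followed by Riemann--Roch on each curve; this bypasses any need to track ramification divisors explicitly and delivers the affine relation between $\deg(\pi_*N)$, $\deg(N)$, and $\operatorname{rank}(N)$ in one stroke. A secondary point to address is that the invertible twist $\mathcal{L}'$ appearing in Hitchin's explicit description of $M$ does not affect the slope comparison, since tensoring by a fixed line bundle shifts all slopes by the same amount and hence preserves both the subbundle correspondence and the direction of the inequalities.
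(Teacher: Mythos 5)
Your proof is correct and takes essentially the same approach as the paper: the paper's own argument combines the pushforward degree formula (\ref{pushdeg}) --- which is precisely your affine relation with $c=\deg(\pi_*\mathcal{O}_{X_s})=(1-g_{X_s})-\deg(\pi)(1-g_X)$, derived just as you propose from $\chi(X_s,N)=\chi(X,\pi_*N)$ and Riemann--Roch --- with the subobject correspondence of Remark \ref{subbunpair}. Your explicit monotone-affine slope computation simply spells out what the paper calls the ``immediate fact'' that a subbundle $N$ of $M$ obeys the slope inequality if and only if $\pi_*N\subset\pi_*M$ does.
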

\begin{proof} In this context we recall that the degree of the pushforward bundle (more generally for push forward of quasi-coherent sheaves) is given as 
\begin{equation}\label{pushdeg}
    \deg(\pi_*M) = \deg(M) + \text{rank}(M)(1 - g_{X_s}) -\deg(\pi)\text{rank}(M)(1 - g_X).
\end{equation}
A proof of this formula is modeled on the proof of Proposition 4.3 in \cite{bre}. The key observation here is that an argument that works for the pushforward of a line bundle will make sense for vector bundles of arbitrary ranks. The rest follows from Remark \ref{subbunpair} and the immediate fact that a subbundle $N$ of $M$ obeys the slope inequality if and only if the subbundle $f_*N$ of $f_*M$ obeys the slope inequality \ref{slope}.
\end{proof}

\begin{corollary}\label{jorho}
     Let $M$ be a semistable bundle on a smooth integral spectral curve $X_s$ of rank $n$. Consider a Jordan-H\"older filtration $0 = M_0\subset M_1\subset\dots \subset M_k = M$ that is, $\mu(M) = \mu(M_i/M_{i - 1})$ and $M_i/M_{i - 1}$ is stable for all $i$. Then a Jordan-H\"older filtration of $\pi_*(M, \eta)$ (as defined in \ref{Jor-Hol}) is given by $0 = \pi_*(M_0, \eta)\subset \pi_*(M_1, \eta)\subset\dots \subset\pi_*(M_k, \eta) = \pi_*(M, \eta)$. Conversely, let $(E, \phi)$ be a semistable pair on $X$ which is annihilated by $p$ and a Jordan-H\"older filtration $0 = (E_0, \phi)\subset\dots \subset (E_k, \phi) = (E, \phi)$ is obtained as the pushforward of a filtration of $M$ such that $\pi_*(M, \eta) = (E, \phi)$. Finally, $\frak{gr}(E, \phi) \cong \pi_*\frak{gr}(M, \eta)$.
\end{corollary}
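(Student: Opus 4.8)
The plan is to leverage the categorical equivalence established in Theorem~\ref{extension} together with the slope-comparison machinery of Corollary~\ref{extsemi}, treating the Jordan--H\"older statement as a formal consequence of the fact that $\pi_*$ is an exact, faithful, degree- and slope-controlling functor. First I would record that since $\pi$ is finite (hence affine), the pushforward $\pi_*$ is exact; therefore applying $\pi_*$ to the short exact sequences $0\to M_{i-1}\to M_i\to M_i/M_{i-1}\to 0$ yields short exact sequences $0\to \pi_*M_{i-1}\to \pi_*M_i\to \pi_*(M_i/M_{i-1})\to 0$ of $\mathcal O_X$-modules, and by Remark~\ref{subbunpair} each $\pi_*M_i$ is an invariant subbundle of $(E,\phi)=\pi_*(M,\eta)$ with locally free successive quotient $\pi_*(M_i/M_{i-1})\cong \pi_*M_i/\pi_*M_{i-1}$. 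This gives the asserted filtration $0=\pi_*(M_0,\eta)\subset\cdots\subset\pi_*(M_k,\eta)=\pi_*(M,\eta)$ as a filtration by $\phi$-invariant subbundles.

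Next I would verify the two defining properties of a Jordan--H\"older filtration of the pair. For the slope equality, I would apply the degree formula~\eqref{pushdeg} and the observation already used in the proof of Corollary~\ref{extsemi}: because $\pi_*$ shifts slopes by a universal affine function of the rank that is constant across all subsheaves of a fixed $X_s$, the equalities $\mu(M_i/M_{i-1})=\mu(M)$ transport to $\mu\bigl(\pi_*(M_i/M_{i-1})\bigr)=\mu(\pi_*M)=\mu(E)$. For stability of each graded piece, I would invoke Corollary~\ref{extsemi} applied to the stable bundle $M_i/M_{i-1}$ on $X_s$: its pushforward pair $\pi_*(M_i/M_{i-1},\eta_i)$ is a stable $L$-twisted pair on $X$, and this pair is precisely the quotient pair $(\pi_*M_i/\pi_*M_{i-1},\phi_i)$ induced from $\phi$, since $\pi_*$ carries the tautological action $\eta$ to the twisted endomorphism $\phi$ compatibly with quotients. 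This establishes that the pushforward filtration is genuinely a Jordan--H\"older filtration of $(E,\phi)$.

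For the converse, given a semistable pair $(E,\phi)$ annihilated by $p$, Theorem~\ref{extension} produces a unique vector bundle $M$ with $\pi_*(M,\eta)=(E,\phi)$, and Corollary~\ref{extsemi} ensures $M$ is semistable. I would then take any Jordan--H\"older filtration of $M$ on $X_s$ and apply the forward direction to realize \emph{a} Jordan--H\"older filtration of $(E,\phi)$ as its pushforward; the point is that every term $(E_i,\phi)$ of this filtration is $\pi_*$ of a corresponding subsheaf $M_i\subset M$, using Remark~\ref{subbunpair} to pull invariant subbundles of the pair back to subbundles on $X_s$. Finally, the graded-object isomorphism $\mathfrak{gr}(E,\phi)\cong\pi_*\mathfrak{gr}(M,\eta)$ follows by taking the direct sum over $i$ of the identifications $(\pi_*M_i/\pi_*M_{i-1},\phi_i)\cong\pi_*(M_i/M_{i-1},\eta_i)$ obtained above, combined with the compatibility of $\pi_*$ with finite direct sums.

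The main obstacle I anticipate is not the exactness or the slope bookkeeping---those follow cleanly from finiteness of $\pi$ and formula~\eqref{pushdeg}---but rather ensuring that the induced quotient twisted endomorphism $\phi_i$ on $\pi_*M_i/\pi_*M_{i-1}$ is \emph{identically} the endomorphism $\pi_*\eta_i$ coming from the tautological action on $M_i/M_{i-1}$, rather than merely being conjugate or agreeing up to the correspondence. This requires checking that the $\pi_*\mathcal O_{X_s}$-module structure (equivalently the $\mathrm{Sym}(\mathcal L^{-1})/\mathcal I$-action induced by $\phi$) descends to the subquotients compatibly with the module structures coming from the $M_i$, which is where Remark~\ref{corres} and the uniqueness clause of Theorem~\ref{extension} must be used carefully to avoid a circular appeal. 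Once this compatibility is nailed down on each trivializing neighbourhood and glued, the uniqueness-up-to-isomorphism of $\mathfrak{gr}$ transfers from $X_s$ to $X$ through the equivalence.
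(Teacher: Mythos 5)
Your proposal is correct and follows essentially the same route as the paper, which justifies the corollary in one line by noting that finiteness of $\pi$ makes $\pi_*$ commute with quotients and direct sums, with slope and stability transfer delegated to Corollary \ref{extsemi} and Remark \ref{subbunpair} exactly as you do. Your expanded treatment---exactness of $\pi_*$ for the affine morphism $\pi$, the affine slope relation from formula \eqref{pushdeg}, and the check that the induced quotient endomorphism agrees with $\pi_*\eta_i$ (which is automatic since every $\mathcal{O}_{X_s}$-submodule is $\eta$-invariant and $\pi_*$ preserves the relevant commutative diagrams)---is simply a careful unpacking of the paper's terse justification.
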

The justification is immediate: since $\pi$ is a sufficiently well-behaved morphism --- in particular, since is finite --- the pushforward operation commutes with quotients and the direct sum of bundles. In what follows, we extend this equivalence to a Harder-Narasimhan filtration of bundles and pairs.

\begin{proposition}\label{har} The following statements are true:

    \textbf{A:} Let $E$ be a vector bundle over a curve $X$. Then, $E$ has a unique increasing filtration by vector subbundles $ 0 = E_0\subset E_1\subset E_2\subset\dots \subset E_k = E$ for which $gr_i = E_i/E_{i - 1}$ satisfies the following conditions:
    
    (i) the quotient $gr_i$ is semistable;
    
    (ii) $\mu(gr_i) > \mu(gr_{i + 1})$ for $i = 1,\dots , k - 1$.
    
    \textbf{B:} Likewise, let $(E, \phi)$ be an $L$-twisted pair on $X$. Then, $E$ has a unique increasing filtration by invariant subbundles $ 0 = E_0\subset E_1\subset E_2\subset\dots \subset E_k = E$ such that the quotient pair $gr_i = (E_i/E_{i - 1}, \phi_i)$ satisfies the following conditions:
    
    (i) the quotient $gr_i$ is a semistable pair;
    
    (ii) $\mu(gr_i) > \mu(gr_{i + 1})$ for $i = 1,\dots , k - 1$.
\end{proposition}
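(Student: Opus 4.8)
The plan is to establish Part A by the classical argument via the maximal destabilizing subbundle, and then to observe that Part B is obtained by running the identical argument inside the additive category of $L$-twisted pairs, with the word ``subbundle'' replaced throughout by ``$\phi$-invariant subbundle.'' (The spectral correspondence of Theorem \ref{extension} is not directly applicable here, since it presumes a fixed integral spectral curve and annihilating polynomial $p_s$, whereas the present statement concerns arbitrary bundles and pairs.) For Part A, the first step is to verify that the set $\{\mu(F) : 0\neq F\subseteq E\}$ of slopes of nonzero subsheaves of $E$ is bounded above, so that $\mu_{\max}(E) := \sup_F \mu(F)$ is a finite rational number. On a smooth projective curve this boundedness is standard: for each fixed rank $r'\leq\operatorname{rank}(E)$ the degrees of rank-$r'$ subsheaves of $E$ are bounded above, and there are only finitely many ranks to consider; since degrees are integers and ranks are bounded, the supremum is attained. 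I then let $E_1$ be a subbundle (saturating the maximizer if necessary, which only increases the degree) realizing $\mu_{\max}(E)$ and having maximal rank among all such subbundles.

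The second step is to show that $E_1$ is the unique maximal destabilizing subbundle. Semistability of $E_1$ is immediate, since a subbundle of $E_1$ of strictly larger slope would exceed $\mu_{\max}(E)$. Uniqueness rests on the elementary inequality coming from the exact sequence $0\to F_1\cap F_2\to F_1\oplus F_2\to F_1+F_2\to 0$, which yields $\deg(F_1\cap F_2)+\deg(F_1+F_2)\geq \deg F_1+\deg F_2$ once the image $F_1+F_2$ is saturated. Applying this to two subbundles of slope $\mu_{\max}(E)$ forces their saturated sum again to have slope $\mu_{\max}(E)$; by maximality of rank this sum must coincide with each of them, so the maximal destabilizing subbundle is unique, and the same inequality shows that every subbundle $F$ with $\mu(F)\geq\mu(E_1)$ is contained in $E_1$.

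The third step builds the full filtration by induction on rank. Applying the construction to $E/E_1$ produces its maximal destabilizing subbundle $G_1$, and I verify the strict inequality $\mu(E_1)>\mu(G_1)$ by examining the preimage $\tilde G$ of $G_1$ in $E$: its slope lies between $\mu(E_1)$ and $\mu(G_1)$, so were $\mu(G_1)\geq\mu(E_1)$ the subbundle $\tilde G$ would contradict either the maximality of $\mu_{\max}(E)$ or the maximal rank of $E_1$. Pulling back the inductively constructed filtration of $E/E_1$ then gives the desired filtration of $E$ with strictly decreasing slopes. For uniqueness of the entire filtration the key tool is the Hom-vanishing lemma: if $F,G$ are semistable with $\mu(F)>\mu(G)$ then $\operatorname{Hom}(F,G)=0$, since the saturated image of any nonzero map is a quotient of $F$ and a subbundle of $G$, forcing $\mu(F)\leq\mu(\operatorname{im})\leq\mu(G)$. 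Given two filtrations, the composite $E_1\to E\to E/E_1'$ then vanishes because $E_1$ is semistable of slope $\mu_{\max}(E)$ while every quotient of $E/E_1'$ has strictly smaller slope; hence $E_1\subseteq E_1'$, and by symmetry $E_1=E_1'$, after which induction on rank concludes.

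Finally, Part B follows by transporting this argument into the category of $L$-twisted pairs. The only facts required are that the intersection, the saturated sum, the image, the kernel, and the cokernel of $\phi$-invariant subbundles (respectively, of morphisms of pairs) are again $\phi$-invariant, each of which is immediate from the defining condition $\phi(F)\subseteq F\otimes L$, together with the pair-theoretic Hom-vanishing statement that no nonzero morphism of pairs can exist between semistable pairs of slopes $\mu(F)>\mu(G)$. With these in place, the maximal destabilizing invariant subbundle, the rank induction, and the uniqueness argument all repeat verbatim, with slopes computed on the underlying bundles. I expect the main obstacle to be precisely the boundedness together with the sum/intersection inequality that underlie the existence and uniqueness of the maximal destabilizing (invariant) subbundle; once that bookkeeping is secured, the remainder is a formal repetition of the classical Harder--Narasimhan induction.
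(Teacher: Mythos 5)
Your proposal is correct and takes essentially the same approach as the paper: the paper offers no independent argument, instead citing the classical Harder--Narasimhan construction for vector bundles (the maximal destabilizing subbundle, boundedness, Hom-vanishing, and uniqueness steps in Daly, with boundedness in Le Potier) and asserting that Part B is proved identically with ``subbundle'' replaced by ``$\phi$-invariant subbundle'' --- which is precisely the argument you reconstruct in full. The one point deserving an explicit line in Part B is that the saturation of a $\phi$-invariant subsheaf is again $\phi$-invariant (the induced map from the saturation to the torsion-free quotient tensored with $L$ kills the subsheaf, hence factors through a torsion sheaf and vanishes), but this is standard and fits your outline as stated.
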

There is a straightforward proof of (B) which is identical to the one with the vector bundle case in (A). We follow a proof given for vector bundles, available in Lemma 5.6 \cite{Daly}, Proposition 5.7 \cite{Daly}, Lemma 5.8 \cite{Daly}, and Proposition 5.9 \cite{Daly}. A detailed discussion on boundedness-related results is also available in the literature (\cite{Potier}).

\begin{corollary}
    The Harder-Narasimhan filtrations of pairs $(E, \phi)$ on $X$ are in one-to-one correspondence with the Harder-Narasimhan filtrations of bundles $M$ on the smooth integral spectral cover $X_s$.
\end{corollary}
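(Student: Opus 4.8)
The plan is to show that the bijection on objects from Theorem \ref{extension} transports the unique Harder--Narasimhan filtration of a pair to that of the corresponding bundle and back. Fix $(E,\phi)=\pi_*(M,\eta)$ and set $r=\deg(\pi)$. First I would recall that, by Proposition \ref{har}, an HN filtration is singled out by three properties: it is a filtration by (invariant) subbundles, each successive quotient $gr_i$ is semistable, and the slopes $\mu(gr_i)$ strictly decrease. The first property is already matched by Remark \ref{subbunpair}, under which subsheaves $N\subset M$ correspond to invariant subbundles $\pi_*N\subset E$ with locally free quotient; the second is matched by Corollary \ref{extsemi}, since semistability is preserved in both directions.

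The decisive ingredient is the behaviour of slopes under $\pi_*$, which I would extract from the degree formula \eqref{pushdeg}. Because $\pi$ is finite of degree $r$, one has $\text{rank}(\pi_*M)=r\,\text{rank}(M)$, so a direct substitution yields
\begin{equation*}
\mu(\pi_*M)=\frac{\deg(M)+\text{rank}(M)(1-g_{X_s})-r\,\text{rank}(M)(1-g_X)}{r\,\text{rank}(M)}=\frac{1}{r}\,\mu(M)+C,
\end{equation*}
where $C=\tfrac{1-g_{X_s}}{r}-(1-g_X)$ depends only on $r$, $g_X$ and $g_{X_s}$, and in particular is independent of $M$. Since $r>0$, the map $\mu\mapsto \mu/r+C$ is strictly increasing, so for any two subbundles $N_1,N_2$ one has $\mu(N_1)>\mu(N_2)$ if and only if $\mu(\pi_*N_1)>\mu(\pi_*N_2)$.

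Combining these observations gives the result. As already used in Corollary \ref{jorho}, $\pi_*$ commutes with quotients, so the quotient $gr_i=M_i/M_{i-1}$ is carried to the quotient pair $\pi_*M_i/\pi_*M_{i-1}$; thus the semistability condition (via Corollary \ref{extsemi}) and the strict slope-decrease condition (via the monotonicity just established) hold for a filtration of $M$ exactly when they hold for its pushforward filtration of $(E,\phi)$. Hence $0=M_0\subset\dots\subset M_k=M$ is the HN filtration of $M$ on $X_s$ if and only if $0=\pi_*M_0\subset\dots\subset\pi_*M_k=E$ is the HN filtration of $(E,\phi)$ on $X$; uniqueness in both settings (Proposition \ref{har}) then promotes this to the asserted bijection. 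The only step demanding care --- and the main, if modest, obstacle --- is verifying that the additive term in \eqref{pushdeg} is proportional to $\text{rank}(M)$, so that after dividing by $\text{rank}(\pi_*M)$ it collapses to the bundle-independent constant $C$; once this is confirmed the monotonicity of slopes, and with it the preservation of the entire HN structure, is immediate.
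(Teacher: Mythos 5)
Your proposal is correct and follows essentially the same route the paper takes (implicitly, since the paper presents this corollary as an immediate consequence): the subbundle correspondence of Remark \ref{subbunpair}, the semistability equivalence of Corollary \ref{extsemi}, and the compatibility of $\pi_*$ with quotients from Corollary \ref{jorho}. Your explicit computation that \eqref{pushdeg} makes $\mu(\pi_*M)=\mu(M)/r+C$ an affine strictly increasing function of $\mu(M)$ is just a spelled-out form of the monotonicity fact the paper invokes in the proof of Corollary \ref{extsemi}, and it correctly handles the strict slope-decrease condition.
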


\section{Stable pairs and spectral curves on $\mathbb{P}^1$}\label{stp}
We shift our focus to specific base curves here in order to make concrete observations about the spectral correspondence. We will focus in particular on $\mathbb{P}^1$ as the base, as many objects here can be written down explicitly and completely. Let $t \geq 2$. We denote by $\pi:\mathcal{O}(t)\to \mathbb{P}^1$ the unique line bundle of degree $t$ admitting a holomorphic transition data $z\mapsto z^t$ on the set of nonzero complex numbers. We will rely repeatedly on the classical algebro-geometric fact that every holomorphic bundle $E$ on $\mathbb{P}^1$ determines a sequence of integers $m_1\geq\dots \geq m_r$, unique up to permutation such that, 
\begin{equation}\label{Gro}
E\cong \mathcal{O}(m_1)\oplus\cdots\oplus\mathcal{O}(m_r).  
\end{equation}
Of course, $r$ here is the rank of $E$.  A holomorphic bundle map $\phi: E\to E\otimes \mathcal{O}(t)$ is said to be a $t$-twisted endomorphism on $\mathbb{P}^1$. Enabled by the so-called \emph{Birkhoff-Grothendieck splitting} in \ref{Gro}, we may adopt a global representation of $\phi$ as an $r \times r$ matrix. The $(i,j)$-th entry is a section $\phi_{i,j}\in H^0(\mathbb{P}^1, \mathcal{O}(m_i - m_j + t))$ that denotes the component wise map between $\mathcal{O}(m_j)\to\mathcal{O}(m_i + t)$. Recalling that the corresponding tautological line bundle $\pi^*\mathcal{O}(t)$ over $\text{Tot}(\mathcal{O}(t))$ admits a canonical section $\eta$, the spectral curve defined by sections $s = (s_1,\dots , s_r)$ of $\mathcal{O}(t),\dots , \mathcal{O}(tr)$ respectively, is the curve  
 \begin{equation}\label{defspec}
 X_s = \left\{y\in\mbox{Tot}(\mathcal{O}(t)): \eta^r(y) + s_1(\pi(y))\eta^{r-1}(y) +\dots  + s_r(\pi(y)) = 0\right\}.
 \end{equation}
 It is the zero scheme of a global section of the line bundle $\pi^*\mathcal{O}(tr)$. A closer look at the space $\text{Tot}(\mathcal{O}(t))$ gives a clearer understanding of spectral curves. We first realize $\mathbb{P}^1$ as the complex space $(\mathbb{C}\bigsqcup \mathbb{C})/\Phi$ where $\Phi$ is a biholomorphism $\Phi:\mathbb{C}^*\to\mathbb{C}^*; \Phi(x) = \frac{1}{x}$. The space $\text{Tot}(\mathcal{O}(t))$ is then realized as $((\mathbb{C} \times \mathbb{C})\bigsqcup (\mathbb{C} \times \mathbb{C}))/\Psi$ through identifying the open subset $\mathbb{C}^* \times \mathbb{C}$ with itself by the biholomorphism $\Psi(x, y) = \left(\frac{1}{x}, \frac{y}{x^t}\right)$ (cf. p.39 in \cite{bre}). The underlying bundle map $\pi$ takes $[(x,y)]$ to $[x]\in\mathbb{P}^1$. The space of global holomorphic sections of a line bundle $\mathcal{O}(ti)$ is characterized by the complex polynomials of degree $\leq ti$. Thus, we write a pair of complex affine curves simplifying the definition of a spectral curve as following 
 \begin{equation}
 \begin{cases}\label{spec}
     y^r + s_1(x)y^{r-1} +\dots + s_r(x) = 0;\\
     \tilde{y}^r + \tilde{s}_1(\tilde{x})\tilde{y}^{r-1} +\dots + \tilde{s}_r(\tilde{x}) = 0
\end{cases}
\end{equation}

We construct the spectral curve $X_s$ by patching together the two complex affine curves defined above along the biholomorphic map $\Psi$.  We caution that a spectral curve is a complex analytic space that may not always be a Riemann surface and which may admit singular points. In any case, the spectral covering morphism $[(x, y)] \mapsto [x]$ is a finite morphism between complex analytic spaces. To analyze a spectral curve and its covering map, it suffices to focus on one of the two affine patches in Equation \ref{spec}.
\begin{lemma}\label{lem1}
A smooth spectral curve over $\mathbb{P}^1$ is integral if and only if one of two affine curves is irreducible (as a complex polynomial in two variables).
\end{lemma}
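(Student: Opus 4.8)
The plan is to reduce the statement to the elementary topology of irreducible spaces, with the single piece of genuine geometric input being that $X_s$ is equidimensional of dimension $1$. Since we assume $X_s$ is smooth, it is reduced, so \emph{integral} is the same as \emph{irreducible}. I work throughout with the two affine charts from \ref{spec}, namely $C_1 = V(f)$ and $C_2 = V(\tilde f)$ where $f(x,y)=y^r+s_1(x)y^{r-1}+\dots+s_r(x)$ and $\tilde f$ is its counterpart; these form an open cover of $X_s$, and the spectral projection $\pi$ restricts on $C_1$ to $(x,y)\mapsto x$ onto the $x$-line (and similarly on $C_2$). Because $X_s$ is reduced, $f$ is square-free, so $C_1$ is irreducible precisely when $f$ is irreducible as a complex polynomial in two variables (for square-free $f$, irreducibility of $V(f)$ forces $f$ itself to be irreducible), and likewise for $C_2$. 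Thus the lemma is exactly the assertion that $X_s$ is irreducible if and only if $C_1$ (equivalently $C_2$) is.

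Next I establish purity of dimension together with the density statements I will need. Since $f$ is monic of degree $r$ in $y$, the ring $\mathbb{C}[x,y]/(f)$ is free of rank $r$ over $\mathbb{C}[x]$, so $C_1\to\mathbb{A}^1_x$ is finite and flat; hence $C_1$ is pure of dimension $1$, with no isolated or embedded points, and likewise for $C_2$ (this is consistent with the local freeness of $\pi_*\mathcal{O}_{X_s}$ recorded in \ref{aniso}). Reading off the gluing by $\Psi$, the overlap is $C_1\cap C_2=\pi^{-1}(\mathbb{C}^*)$, while $X_s\setminus C_1=\pi^{-1}(\infty)$ and $X_s\setminus C_2=\pi^{-1}(0)$ are finite fibres of the degree-$r$ cover. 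By purity these finite sets contain no irreducible component, so every component of $X_s$ meets $C_1$, meets $C_2$, and meets $C_1\cap C_2$; consequently $C_1$, $C_2$ and $C_1\cap C_2$ are each dense in $X_s$, and $C_1\cap C_2$ is dense in each chart.

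The two implications are then immediate. If $X_s$ is irreducible, its nonempty open subset $C_1$ is irreducible, so $f$ is irreducible. Conversely, if $f$ is irreducible then $C_1$ is irreducible; since $C_1$ is dense we get $X_s=\overline{C_1}$, and the closure of an irreducible subspace is irreducible, so $X_s$ is irreducible and, being reduced, integral. Finally, density of $C_1\cap C_2$ in both charts shows that $C_1$ is irreducible if and only if $C_1\cap C_2$ is if and only if $C_2$ is, which is why it suffices to test a single affine patch.

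The one step that is not purely formal is the density of a single chart in $X_s$ — equivalently, ruling out a spurious zero-dimensional component sitting over the point omitted by that chart ($\infty$ for $C_1$, $0$ for $C_2$). This is exactly where the monic-in-the-fibre form of the spectral polynomial, i.e. the finiteness and flatness of $\pi$ reflected in the local freeness in \ref{aniso}, is indispensable; without equidimensionality the closure of an irreducible chart need not fill out $X_s$. Everything else is the standard interaction of open dense subsets and closures with irreducibility.
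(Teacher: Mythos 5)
Your proof is correct, but it takes a genuinely different route from the paper's. The paper argues algebraically on the defining polynomials: given a monic factorization $f = (y^{r_1}+u_1y^{r_1-1}+\dots+u_{r_1})(y^{r_2}+\dots+v_{r_2})$ over $\mathbb{C}[x]$ on one chart, it applies the coordinate change $\Psi$ and observes that the transformed factors a priori have coefficients only in $\mathbb{C}(\tilde x)$; since the transformed spectral polynomial is monic over the UFD $\mathbb{C}[\tilde x]$, Gauss's lemma forces the factors into $\mathbb{C}[\tilde x]$, which is equivalent to the degree bounds $\deg(u_i)\leq ti$ and $\deg(v_j)\leq tj$. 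This does slightly more than transfer irreducibility between the two charts: it shows that the coefficients of any chart-level factorization are global sections of the bundles $\mathcal{O}(ti)$, i.e.\ that any factorization is induced by a factorization into spectral polynomials of smaller degree, in line with the discussion opening Section \ref{twist}; integrality then follows because the two affine coordinate rings are domains. You instead replace this computation with topology: monicity in $y$ gives finiteness (indeed freeness of rank $r$ over $\mathbb{C}[x]$) of $C_1\to\mathbb{A}^1$, hence purity of dimension $1$, hence density of each chart and of the overlap $\pi^{-1}(\mathbb{C}^*)$, after which irreducibility passes freely among $C_1$, $C_2$, $C_1\cap C_2$, and $X_s=\overline{C_1}$ by the standard interaction of dense open subsets and closures, with reducedness (from smoothness) converting irreducibility of $V(f)$ into irreducibility of $f$ and irreducibility of $X_s$ into integrality. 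Your argument is softer and more portable --- it never touches the transition function $\Psi$, so it works verbatim for spectral curves over any base curve and any trivializing cover --- whereas the paper's computation buys the explicit global factorization data (coefficients landing in $H^0(\mathbb{P}^1,\mathcal{O}(ti))$) that matters elsewhere in the spectral-correspondence context. Your identification of the single non-formal input (ruling out a component sitting over the point omitted by a chart) is exactly right; note that purity is available even more cheaply than via flatness, since $C_1$ and $C_2$ are hypersurfaces in $\mathbb{C}^2$ and Krull's principal ideal theorem already gives equidimensionality.
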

\begin{proof} If the spectral curve $X_s$ is integral, then the affine curves are irreducible simultaneously on the standard open neighbourhoods. If not, we would then obtain a factorization of the associated polynomials. On the other hand, suppose that $$y^r + s_1y^{r - 1} +\dots + s_r = (y^{r_1} + u_1y^{r_1 - 1} +\dots + u_{r_1})(y^{r_2} + v_1y^{r_2 - 1} +\dots + v_{r_2})$$ on one of two affine coordinate charts, meaning that all the coefficients $u_i$ and $v_j$ are elements of $\mathbb{C}[x]$. Here $0 < r_1, r_2 < r$. It is enough to show that $\deg(u_i)\leq ti$ and $\deg(v_j)\leq tj$ to show that polynomial on the other chart is reducible. Changing the coordinates with $\Psi$, we obtain a factorization $$\tilde{y}^r + \tilde{s}_1\tilde{y}^{r - 1} +\dots + \tilde{s}_r = (\tilde{y}^{r_1} + \tilde{u}_1\tilde{y}^{r_1 - 1} +\dots + \tilde{u}_{r_1})(\tilde{y}^{r_2} + \tilde{v}_1\tilde{y}^{r_2 - 1} +\dots + \tilde{v}_{r_2}).$$ Note that the left hand side is a monic polynomial with coefficients over UFD $\mathbb{C}[\tilde{x}]$ and the coefficients $\tilde{u}_i$'s and $\tilde{v}_j$'s are elements of $\mathbb{C}(\tilde{x})$. This is possible precisely when $\tilde{u}_i$'s and $\tilde{v}_j$'s are elements of $\mathbb{C}[\tilde{x}]$ that is, $\deg(u_i)\leq ti$ and $\deg(v_j)\leq tj$ for all $i, j$. The spectral curve $X_s$ is both reduced and irreducible since the irreducible affine algebraic curves are, because the quotient rings (obtained by quotienting with the ideals generated by these polynomials) are integral domains.
\end{proof}

We highlight a specific class of examples of non-generic points, in the sense that their discriminant sections do not necessarily admit distinct zeros, on the affine base. Let $s\in H^0(\mathbb{P}^1, \mathcal{O}(tr))$ be a section with distinct zeros over $\mathbb{P}^1$. The set of such sections is a Zariski open subset of the affine space $H^0(\mathbb{P}^1, \mathcal{O}(tr))$ so we call such elements as the \textit{generic sections} of $\mathcal{O}(tr)$.

\begin{definition}\label{cyd}
Let $s\in H^0(\mathbb{P}^1, \mathcal{O}(tr))$. We call a $t$-twisted pair $(E, \phi)$ \textbf{cyclic} if it admits characteristic polynomial $\lambda^r - s$. In case $s$ is a generic element that is, admits simple roots we call $(E, \phi)$ a \textbf{generic cyclic pair} and corresponding spectral curve a \textbf{generic cyclic spectral curve}. 
\end{definition}
\begin{remark}
    A partial justification for above the definition will be given in section \ref{compl}.
\end{remark}
Our Definition \ref{cyd} generalizes the ordinary cyclic Higgs bundles associated to cyclic quivers (cf. \cite{Ste2}). Observe that the spectral polynomial $\lambda^r - s$ is not in the smooth locus for $r > 2$. It is only for $r = 2$ that we have a spectral curve belonging to the smooth locus. This is due to the fact that its discriminant is a nonzero constant multiple of $s^{r - 1}$. Smooth, integral cyclic spectral covers are characterized in Remark 3.1 \cite{BNR} and Remark 3.5 \cite{BNR}. The generic cyclic spectral covers are integral. Choose a generic section $s$ of the line bundle $L^r$. So, we can not write this divisor in form of $m\cdot D$ for some divisor $D$ on $X$ with $m > 1$ dividing $r$. Moreover, these generic cyclic spectral covers are smooth due to the Jacobian criterion of smoothness. We mention another algebraic proof of the same fact in this context.

\begin{lemma}\label{irred}
     A generic cyclic spectral curve is integral on $\mathbb{P}^1$, and so a generic cyclic pair is stable for all $r\geq 2$.
\end{lemma}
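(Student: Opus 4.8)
The plan is to establish integrality first and then deduce stability as an immediate consequence of the earlier machinery. For integrality, I would appeal to Lemma \ref{lem1}, which reduces the question to the irreducibility of the affine polynomial $y^r - s(x) \in \mathbb{C}[x][y]$ on a single chart, where $s\in H^0(\mathbb{P}^1,\mathcal{O}(tr))$ is a generic section, hence a polynomial with simple roots. Since $\mathbb{C}[x]$ is a UFD with fraction field $\mathbb{C}(x)$, Theorem \ref{ufd2} lets me pass to the question of irreducibility of $y^r - s$ over the field $\mathbb{C}(x)$. The standard criterion for binomials applies here: $y^r - a$ is irreducible over a field $\kappa$ (of characteristic $0$, so all roots of unity are available) precisely when $a$ is not a $p$-th power in $\kappa$ for any prime $p \mid r$, and, when $4 \mid r$, the additional condition that $a \notin -4\kappa^4$. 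So the crux is to verify that $s$ is not a $p$-th power in $\mathbb{C}(x)$ for any prime divisor $p$ of $r$.

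The key step is therefore to rule out $s = h^p$ for $h\in\mathbb{C}(x)$. Here is where genericity enters decisively: because $s$ has \emph{simple} zeros (as a section of $\mathcal{O}(tr)$ with distinct roots), its factorization into linear factors over $\mathbb{C}[x]$ has every exponent equal to $1$. If $s$ were a $p$-th power $h^p$ with $p\geq 2$, then unique factorization in $\mathbb{C}[x]$ would force every zero of $s$ to occur with multiplicity divisible by $p$, contradicting simplicity (one must also account for the behaviour at the point at infinity, which is controlled by the degree $tr$ and the compatibility condition $\deg(\tilde u_i)\le ti$ already recorded in the proof of Lemma \ref{lem1}; the divisor of $s$ as a global section being reduced is the coordinate-free way to say this). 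The same reducedness of the divisor of $s$ rules out the exceptional $4\mid r$ case, since membership in $-4\mathbb{C}(x)^4$ again forces multiplicities divisible by $4$. Thus $y^r - s$ is irreducible over $\mathbb{C}(x)$, hence by Theorem \ref{ufd2} it does not factor over $\mathbb{C}[x]$, and Lemma \ref{lem1} yields that $X_s$ is integral.

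Having established that $X_s$ is smooth (noted already via the Jacobian criterion) and integral, stability is then automatic: by Remark \ref{invariant}, a twisted pair whose characteristic polynomial is defined by an $s$ with $X_s$ integral admits no nontrivial proper invariant subbundle, and is therefore vacuously stable. I expect the main obstacle to be the bookkeeping at the point at infinity on $\mathbb{P}^1$ — i.e.\ ensuring that ``simple zeros'' genuinely means the divisor of $s$ is reduced as a divisor on all of $\mathbb{P}^1$ and not merely on one affine chart — since the two-chart gluing in Lemma \ref{lem1} is exactly what guarantees the degree constraints transfer correctly. Once reducedness of the full divisor is in hand, the $p$-th-power obstruction argument is clean and the irreducibility criterion for binomials closes the argument.
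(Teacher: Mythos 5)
Your proof is correct, but it takes a genuinely different route from the paper's. The paper's proof is a two-line application of Eisenstein's criterion: pick a root $x_0$ of $s$, note that $s$ lies in the prime ideal $P = \langle x - x_0\rangle$ of $\mathbb{C}[x]$ but not in $P^2$ (since $x_0$ is simple), and conclude that $y^r - s(x)$ is irreducible; stability then follows exactly as you say, from Remark \ref{invariant}. You instead invoke the Vahlen--Capelli criterion for binomials $y^r - a$ over a field and rule out $s$ being a $p$-th power in $\mathbb{C}(x)$ via unique factorization (a $p$-th power rational function whose $p$-th root has a pole or nonconstant numerator would force every zero of $s$ to have multiplicity divisible by $p$). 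Both arguments are sound, and both in fact use less than full genericity: Eisenstein needs only \emph{one} simple zero of $s$, while your criterion shows $y^r - s$ is irreducible precisely when $s$ is not a perfect $p$-th power for any prime $p \mid r$, which is an exact characterization and hence slightly more informative. Two small streamlinings of your version: the exceptional case $4 \mid r$, $a \in -4\kappa^4$, is vacuous here since $-4 = (1+i)^4$ is itself a fourth power in $\mathbb{C} \subset \mathbb{C}(x)$, so it collapses into the fourth-power case you already excluded; and your worry about bookkeeping at infinity is unnecessary, because Lemma \ref{lem1} already reduces the whole question to a single affine chart, on which $s$ is a nonconstant polynomial with simple roots --- no reducedness of the divisor at $\infty$ is ever needed. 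The trade-off, in short: the paper's Eisenstein argument is shorter and more elementary; yours is heavier machinery but isolates the true obstruction (perfect-power discriminant sections) to irreducibility of cyclic spectral polynomials.
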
   
\begin{proof} It suffices to prove that the polynomial $y^r - s(x)$ is irreducible over the unique factorization domain $\mathbb C[x]$. Let $x_0$ be a root of $s$. Then $s(x)$ lies inside the prime ideal $P=\langle x - x_0\rangle$ but not inside $P^2$ since $x_0$ is not a repeated root of $s$. It follows from Eisenstein's criterion that $y^r - s(x)$ is irreducible.\end{proof}

The next theorem restricts the Grothendieck numbers (i.e. the degrees of the summand line bundles of the underlying bundle over $\mathbb{P}^1$) of the semistable Hitchin pairs on $\mathbb{P}^1$.
 
\begin{theorem}\label{stgro}
 Let $E \cong \mathcal{O}(m_1)\oplus\dots \oplus \mathcal{O}(m_r)$ be a vector bundle over $\mathbb{P}^1$ such that the integers $m_i$ satisfy $m_1\geq\dots \geq m_r$. Let $t\geq0$ be an integer. If $E$ is the underlying bundle of a $t$-twisted semistable Hitchin pair then, for $1\leq i\leq r - 1$, we have
\begin{equation}\label{inq}
m_{i}\leq m_{i + 1} + t.\\
\end{equation}
Let us suppose that $E$ obeys inequality \ref{inq} and take $s\in H^0(\mathbb{P}^1, \mathcal{O}(tr))$. Then, there exists a cyclic pair $(E, \phi_s)$ with characteristic coefficients $(0,..,0, s)$.
\end{theorem}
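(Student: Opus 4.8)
The statement has two independent halves, and I would treat them separately.

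\emph{First half (semistability forces the inequality).} The plan is to argue by contraposition: assuming $(E,\phi)$ is a semistable $t$-twisted pair, I suppose $m_i > m_{i+1}+t$ for some $i$ and exhibit a $\phi$-invariant subbundle violating the slope inequality \ref{slope}; in fact the subbundle will be invariant for \emph{any} $\phi$, since the relevant components of the Higgs field are forced to vanish. Set $F = \mathcal{O}(m_1)\oplus\dots\oplus\mathcal{O}(m_i)$, the subbundle spanned by the first $i$ summands. The only components of $\phi$ that could carry $F$ outside $F\otimes\mathcal{O}(t)$ are the entries $\phi_{j,k}\in H^0(\mathbb{P}^1,\mathcal{O}(m_j-m_k+t))$ with source index $k\le i$ and target index $j>i$. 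For such indices $m_k\ge m_i$ and $m_j\le m_{i+1}$, so $m_j-m_k+t\le m_{i+1}-m_i+t<0$; since a line bundle of negative degree on $\mathbb{P}^1$ has no nonzero sections, all these entries vanish identically and $F$ is automatically $\phi$-invariant. It then remains to compare slopes: because each of the first $i$ Grothendieck numbers is $\ge m_i$ while each of the remaining $r-i$ is $\le m_{i+1}<m_i$, one gets $\mu(F)\ge m_i > m_{i+1}\ge \mu(E/F)$, and since $\mu(E)$ is the convex combination $\tfrac{1}{r}\bigl(i\,\mu(F)+(r-i)\mu(E/F)\bigr)$ this forces $\mu(F)>\mu(E)$, contradicting semistability. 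The one point worth checking carefully is that the vanishing of the lower-left block is genuinely automatic---a feature of $\mathbb{P}^1$ and the ordering $m_1\ge\dots\ge m_r$---rather than something requiring a clever choice of destabilizer.

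\emph{Second half (construction of a cyclic pair).} Assuming $m_i\le m_{i+1}+t$ for all $i$, I would build $\phi_s$ as a ``cyclic shift'' in the spirit of the companion matrix of Example \ref{semistable}, but with its weight distributed around the cycle rather than concentrated in one corner. Concretely, let $\phi_s$ send $\mathcal{O}(m_k)\to\mathcal{O}(m_{k+1})\otimes\mathcal{O}(t)$ for $1\le k\le r-1$ and $\mathcal{O}(m_r)\to\mathcal{O}(m_1)\otimes\mathcal{O}(t)$, so that the only nonzero entries are the subdiagonal ones $\sigma_k=\phi_{k+1,k}\in H^0(\mathcal{O}(d_k))$ with $d_k=m_{k+1}-m_k+t$ for $k<r$, together with the corner $\sigma_r=\phi_{1,r}\in H^0(\mathcal{O}(d_r))$ with $d_r=m_1-m_r+t$. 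The inequality gives $d_k\ge 0$ for every $k$, and a telescoping computation shows $\sum_{k=1}^r d_k=rt=\deg s$. The key step is to realize $s$ as the product $\sigma_1\cdots\sigma_r$: since $\mathbb{C}$ is algebraically closed, $s$ is a homogeneous form of degree $rt$ in the two homogeneous coordinates and hence factors into $rt$ linear forms, and I would distribute these $rt$ factors into groups of sizes $d_1,\dots,d_r$, letting $\sigma_k$ be the product of the $k$-th group so that $\sigma_k\in H^0(\mathcal{O}(d_k))$ and $\prod_k\sigma_k=s$.

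To finish, I would compute the characteristic polynomial of $\phi_s$ directly. Expanding $\det(\lambda\cdot I-\phi_s)$ along the first row leaves only two surviving minors---an upper one equal to $\lambda^{r-1}$ and a corner one equal to $(-1)^{r-1}\sigma_1\cdots\sigma_{r-1}$---and collecting signs yields exactly $\lambda^r-\sigma_1\cdots\sigma_r=\lambda^r-s$, so the characteristic coefficients are $(0,\dots,0,s)$ as required; the iterated-endomorphism description of Section \ref{annimor} gives the same conclusion, since the wrap-around forces $\phi_s^{\,r}=s\cdot\mathrm{id}_E$. I expect the main obstacle to be precisely the factorization step of the second half: a naive single-corner companion matrix would force one entry in $H^0(\mathcal{O}(m_1-m_r+t))$ to carry all of $s$, which generally fails when the Grothendieck numbers are close together, so the essential insight is that the inequality $m_i\le m_{i+1}+t$ is exactly what permits $s$ to be spread as a product of sections of the nonnegative-degree line bundles $\mathcal{O}(d_k)$ around the cycle.
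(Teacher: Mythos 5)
Your proposal is correct, and on the constructive half it is in essence the paper's own strategy, carried out more completely. Note first that the paper does not prove the necessity of inequality \ref{inq} at all: it cites theorems 3.1 and 6.1 of \cite{Rayan,Ste1}. You instead supply the standard direct argument --- when $m_i > m_{i+1}+t$, every entry $\phi_{j,k}$ with $k\leq i<j$ lies in $H^0(\mathbb{P}^1,\mathcal{O}(m_j-m_k+t))$ with $m_j-m_k+t\leq m_{i+1}-m_i+t<0$, so $F=\mathcal{O}(m_1)\oplus\dots\oplus\mathcal{O}(m_i)$ is invariant under \emph{any} twisted endomorphism and destabilizes by the convex-combination comparison of slopes --- and this is exactly the argument of the cited sources, so your first half is sound and self-contained where the paper defers. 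For the sufficiency, the paper works only the two extreme cases (all subdiagonal degrees $0$, giving the companion matrix with $\pm s$ in the corner; all subdiagonal degrees $t$, giving the cyclic shift with $s=u_1\cdots u_r$) and then gestures at the general case with ``factor the determinant $s$ and carefully regroup.'' Your uniform construction --- entries $\sigma_k\in H^0(\mathbb{P}^1,\mathcal{O}(d_k))$ with $d_k=m_{k+1}-m_k+t\geq 0$ for $k<r$ and $d_r=m_1-m_r+t\geq 0$, the telescoping identity $\sum_{k=1}^r d_k=rt$, and the distribution of the $rt$ linear factors of the binary form $s$ into groups of sizes $d_1,\dots,d_r$ --- is precisely what that gesture amounts to, made explicit, and it subsumes both of the paper's extremes (when $d_k=0$ the entry $\sigma_k$ is a nonzero constant, which may be scaled to $1$). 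Your determinant expansion yielding $\lambda^r-\sigma_1\cdots\sigma_r$ is also correct. Two trivial caveats: the factorization step tacitly assumes $s\neq 0$ (for $s=0$ simply take $\sigma_r=0$), and your final sign bookkeeping is loose --- under the convention of Definition \ref{anni}, characteristic coefficients $(0,\dots,0,s)$ correspond to $\lambda^r+s$, so one should replace $\sigma_r$ by $-\sigma_r$; but the paper itself is equally loose here (``adjusting signs as necessary''), so neither point is a substantive gap.
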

\begin{proof} The proof of necessary condition \ref{inq} previously appeared in theorems 3.1 and 6.1 of \cite{Rayan,Ste1}, respectively. As such, we prove the latter part of the theorem and, within that, we focus on certain extreme cases, suggesting a strategy that applies to all cases. Since the integers $m_1,\dots ,m_r$ satisfy the simultaneous inequalities$$0\leq m_2 - m_1 + t \leq t,\;\dots ,\;0\leq m_r - m_{r - 1} + t \leq t$$as well as the identity$$(m_2 - m_1 + t) +\cdots + (m_r - m_{r - 1}  + t) + (m_1 - m_r + t) = tr,$$we arrive at$$t\leq m_1 - m_r + t \leq tr.$$Now, choose a section $s\in H^0(\mathbb{P}^1, \mathcal{O}(tr))$. In the case that$$m_2 - m_1 + t =\cdots = m_r - m_{r - 1} + t = 0,$$we have the equality $m_1 - m_r + t = tr$. This leads us to the construction of $\phi_s$ as 
$$\phi_s = \begin{bmatrix}
0 & 0 &\dots &\dots & \pm s\\
1 & 0 &\dots &\dots  & 0\\
0 & 1 & 0 &\dots  & 0\\
\vdots & \vdots & \ddots &\ddots &\vdots\\
0 & 0 &\dots & 1 & 0
\end{bmatrix},$$
\noindent where we have adjusted signs as necessary. At the other extreme, we have $$m_2 - m_1 + t =\cdots = m_r - m_{r - 1} + t = t$$and $m_r - m_1 + t = t$.  We can of course represent $s$ on an affine chart by a complex polynomial of degree at most $tr$. We exploit the Fundamental Theorem of Algebra to distribute its roots in a strategic way, wherever permitted in the global components. We write $s = u_1\dots u_r$ with each factor having degree at most $t$ (and where one or more $u_i$ may be $1$). Then, we construct $\phi_s$ --- again, adjusting signs as necessary --- in the following way:
$$\phi_s = \begin{bmatrix}
0 & 0 &\dots &\dots & \pm u_r\\
u_1 & 0 &\dots &\dots  & 0\\
0 & u_2 & 0 &\dots  & 0\\
\vdots & \vdots & \ddots &\ddots &\vdots\\
0 & 0 &\dots & u_{r - 1} & 0
\end{bmatrix}.$$ To prove the statement in the remaining cases, the general strategy is suggested by the latter extreme case: we factor the determinant $s$ and carefully regroup the irreducible factors as needed and distribute them in the matrix.
\end{proof}
It may be worth noting that cyclic pairs play a special role in the geometry of twisted Higgs bundle moduli spaces because they can be used to define, in the case of $t$-twisted pairs, analogues of the so-called ``Hitchin section'' (cf. \cite{Ste2} for example) for usual Higgs bundle moduli spaces. Moreover, the space of cyclic chains is acted upon by an $(r - 1)$-real-dimensional compact group as follows. Let $E$ be a bundle on $\mathbb{P}^1$ and $(u_1,\dots , u_r)$ be sections on $\mathbb{P}^1$ as in our last proof. If $u_i = 0$ for some $1\leq i\leq r-1$ then $E_i\cong\mathcal{O}(m_1)\oplus\cdots\oplus\mathcal{O}(m_i)$ is invariant and $\mu(E_i)\geq \mu(E)$. To obey the stability property, we restrict $u_i\neq 0$ for $1\leq i\leq r-1$. On the other hand, let $u_i\neq 0$ for $1\leq i\leq r-1$ and $u_r\neq 0$. Then, there is no nonzero proper invariant subbundle and stability is automatic. If $u_r = 0$, then a nonzero proper invariant subbundle is either of $\mathcal{O}(m_r);\dots ;~\mathcal{O}(m_r)\oplus\dots \oplus\mathcal{O}(m_1)$, so that semistability of this pair is respected. Finally, we restrict $\deg(E)$ and $\text{rank}(E)$ to be mutually prime to confirm stability of each semistable cyclic pair. Now, we may define an natural action of the $(r-1)$-fold product  $\mathbb{S}^1\times\dots \times\mathbb{S}^1$  on $(1,\dots ,1)$-cyclic chains: \begin{align*}
(\lambda_1,\dots , \lambda_{r - 1}).\begin{bmatrix}
0 & 0 &\dots &\dots & \phi_r\\
\phi_1 & 0 &\dots &\dots  & 0\\
0 & \phi_2 & 0 &\dots  & 0\\
\vdots & \vdots & \ddots &\ddots &\vdots\\
0 & 0 &\dots & \phi_{r - 1} & 0
\end{bmatrix}\\ 
= \begin{bmatrix}
0 & 0 &\dots &\dots & \lambda_1^{-1}\dots \lambda_{r - 1}^{-1}\phi_r\\
\lambda_1\phi_1 & 0 &\dots &\dots  & 0\\
0 & \lambda_2\phi_2 & 0 &\dots  & 0\\
\vdots & \vdots & \ddots &\ddots &\vdots\\
0 & 0 &\dots & \lambda_{r - 1}\phi_{r - 1} & 0
\end{bmatrix}.
\end{align*}
If two such chains are equivalent under the group action, they are isomorphic as pairs. Indeed $(E, (\lambda_1,\dots ,\lambda_{r - 1}).\phi) \cong (E, \phi)$ since $(\lambda_1,\dots ,\lambda_{r - 1}).\phi = \psi\phi\psi^{-1}$ where $\psi$ denotes a diagonal matrix with $i$-th diagonal entry $\lambda_1\dots \lambda_{i-1}$. We denote the underlying orbit space on $\mathbb{P}^1$ by $\mathcal{M}(m_1,\dots , m_r, t)$ keeping in mind that $\sum m_i$ is co-prime to $r$ and describe the moduli as the quotient $$\frac{\prod\limits_{i = 1}^{r - 1} \mathbb{C}^{m_{i + 1} - m_i + t + 1}\backslash\{0\}\times \mathbb{C}^{m_1 - m_r + t + 1}}{(\mathbb{S}^1)^{r - 1}}.$$ We observe that this is a proper group action by a compact Hausdorff group. Thus, this quotient is a Hausdorff space, and the orbits are closed real submanifolds of the parent space.

\section{Numerical computation of the co-Higgs sheaves}\label{numcom}
On $\mathbb{P}^1$, an $\mathcal{O}(2)$-twisted pair is normally referred to as a \textit{co-Higgs bundle} (\cite{Rayan,Ste1}). In this case, the generic spectral curve is an elliptic curve (cf. Equation \ref{genus}) embedded in the total space of $\mathcal{O}(2)$. The spectral correspondence for semistable rank $2$ co-Higgs bundles was described completely, at the level of an algebraic equation for each Hitchin fiber including the singular ones, thereby producing in turn a specific algebraic realization of the entire moduli space as a specific quasiprojective variety residing in a given ambient space \cite{Ste1}. Consistent with the theme and goals of the present article, we explore the spectral correspondence for co-Higgs bundles of higher rank. We recall some results (\cite{Ati}) at this stage in order to compute pushforward bundles of vector bundles on an elliptic curve. Examples of a similar computation for rank $1$, utilizing the push-pull projection formula, can be found in \cite{Ste1}. In the following computations, we work with a complex elliptic curve $X$ realized as a $2:1$ branched covering map $f:X\to\mathbb{P}^1$ and $E$ will denote an indecomposable bundle of rank $n$ over $X$.\\

\textbf{Case I:} Let $\deg(E) = 0$. Then, the degree of the bundle $f_*E$ on $\mathbb{P}^1$ is $\deg(f_*E) = -2n$. Appealing to the Birkhoff-Grothendieck decomposition, we will write$$f_*E\cong \mathcal{O}(a_1)\oplus\cdots \oplus \mathcal{O}(a_{2n}),$$where$$a_1 +\cdots + a_{2n} = -2n.$$Here $\dim H^0(X,E) = 0$ or $1$ as per Lemma 15 in \cite{Ati}.\\

(a) If $\dim H^0(X,E) = 0$ then $a_i < 0$ for all values of $i$. Otherwise, we have $\dim H^0(X,E) \geq 1$. Thus, $a_i \leq - 1$ holds $\forall i$ and $\sum_{i = 1}^{2n} a_i \leq - 2n$. In fact, equality holds if and only if $a_i = - 1$, for all values of $i$. Thus, 
\begin{equation}
f_*E\cong \mathcal{O}(-1)\oplus\cdots \oplus \mathcal{O}(-1).
\end{equation}

(b) If $\dim H^0(X,E) = 1$ then there is an $i$ such that $a_i\geq 0$. If there are $i, j$ with $a_i\geq 0$ and $a_j\geq 0$, then $\dim H^0(X,E)\geq 2$. So, $a_i\geq 0$ for only one value of $i$ say $i = 1$. On the other hand, $a_1 > 0\implies \dim H^0(X,E) > 1$, so $a_1 = 0$. Setting $a_1 = 0$, we have
$$a_2 +\dots + a_{2n} = -2n$$with $a_2 < 0,\dots , a_{2n} < 0$. From $a_2\leq -1,\dots , a_{2n}\leq -1$ obtain that$$a_2 +\dots + a_{2n}\leq -2n + 1$$and finally there is exactly one value $i\geq 2$ --- say, $i = 2$ such that $a_2 = -2$. Thus, 
\begin{equation}
f_*E \cong \mathcal{O}\oplus \mathcal{O}(-2)\oplus \mathcal{O}(-1)\oplus\dots \oplus \mathcal{O}(-1).
\end{equation}

$\textbf{Case II:}$ Let $\deg(E) = 1$. From Lemma 15 \cite{Ati}, $\dim H^0(X,E) = 1$. Let suppose$$f_*E \cong \mathcal{O}(a_1)\oplus\dots \oplus\mathcal{O}(a_{2n}).$$ From the degree computation of the pushforward bundle, we have $\deg(f_*E) = 1 - 2n$. From earlier reasoning $a_1 = 0$ and thus $a_2 +\dots + a_{2n} = 1 - 2n$. On the other hand,$$a_2\leq -1,\dots , a_{2n}\leq -1.$$This leads us to $ a_2 =\dots = a_{2n} = -1$. Finally, 
\begin{equation}
f_*E \cong \mathcal{O}\oplus \mathcal{O}(-1)\oplus\cdots\oplus \mathcal{O}(-1).
\end{equation}

\begin{lemma}
Let $E$ be an indecomposable vector bundle over an elliptic curve $X$ such that $\deg(E) < 0$. Then $H^0(X,E)$ is trivial.
\end{lemma}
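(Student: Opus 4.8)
The plan is to argue by contradiction, exploiting the fact that indecomposable bundles on an elliptic curve are semistable. Suppose $H^0(X,E) \neq 0$ and choose a nonzero section, which we regard as a sheaf injection $\mathcal{O}_X \hookrightarrow E$. Passing to the saturation produces a sub-line-bundle $L \subset E$ with $E/L$ locally free and $\mathcal{O}_X \subseteq L$, so that $\deg(L) \geq \deg(\mathcal{O}_X) = 0$. Since $X$ has genus $1$, the slope of $E$ is $\mu(E) = \deg(E)/\mathrm{rank}(E) < 0$, whereas $\mu(L) = \deg(L) \geq 0 > \mu(E)$. Once semistability of $E$ is in hand, this violates the defining slope inequality and the lemma follows.

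It therefore remains to justify that an indecomposable $E$ on $X$ is semistable; this is the crux of the argument. I would deduce it from two features peculiar to the elliptic case. First, the canonical bundle is trivial, $K_X \cong \mathcal{O}_X$, so Serre duality reads $\mathrm{Ext}^1(A,B) \cong \mathrm{Hom}(B,A)^\ast$ for bundles $A, B$. Second, for semistable bundles $A, B$ with $\mu(A) > \mu(B)$ one has $\mathrm{Hom}(A,B) = 0$, since a nonzero map would force a subsheaf of $B$ of slope at least $\mu(A) > \mu(B)$. Granting these, suppose $E$ were not semistable and let $0 \subset E_1 \subset E$ be the first step of its Harder--Narasimhan filtration, so that $E_1$ and $E/E_1$ are semistable with $\mu(E_1) > \mu(E/E_1)$. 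The class of the extension $0 \to E_1 \to E \to E/E_1 \to 0$ lives in $\mathrm{Ext}^1(E/E_1, E_1) \cong \mathrm{Hom}(E_1, E/E_1)^\ast = 0$, whence the sequence splits, contradicting indecomposability. Alternatively, this semistability statement may simply be imported from Atiyah's classification \cite{Ati}.

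The main obstacle is not the final slope comparison, which is immediate, but rather assembling the semistability input cleanly; in particular one must ensure that the Harder--Narasimhan quotients are genuinely semistable of strictly decreasing slope, so that the vanishing $\mathrm{Hom}(E_1, E/E_1) = 0$ actually applies. A secondary technical point is the passage to the saturation $L$: one should confirm that the saturated subsheaf generated by a nonzero section is locally free of rank one with nonnegative degree, which holds because a torsion-free sheaf of rank one on a smooth curve is a line bundle and saturating can only increase the degree. With these in place, the contradiction with $\mu(E) < 0$ closes the proof.
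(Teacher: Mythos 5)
Your proof is correct, but it runs along a genuinely different track from the paper's. The paper argues numerically: it dualizes, applies Lemma 15 of \cite{Ati} to the indecomposable bundle $E^*$ of positive degree to get $\dim H^0(X,E^*) = -d$, converts this to $\dim H^1(X,E) = -d$ via Serre duality with $K_X \cong \mathcal{O}_X$, and concludes $\dim H^0(X,E) = 0$ from Riemann--Roch on a genus-$1$ curve. You instead route everything through semistability: indecomposable bundles on an elliptic curve are semistable, and a semistable bundle of negative slope admits no nonzero section since the saturation of $\mathcal{O}_X \hookrightarrow E$ would be a destabilizing line subbundle of degree $\geq 0$. Your approach needs the extra input ``indecomposable $\Rightarrow$ semistable,'' which the paper itself uses elsewhere in the same section (citing \cite{Tu}), so importing it is legitimate; your self-contained derivation via the splitting of the extension $0 \to E_1 \to E \to E/E_1 \to 0$ is also sound, and the technical point you flag closes easily: even when $E/E_1$ is not semistable, every Harder--Narasimhan factor of $E/E_1$ has slope strictly below $\mu(E_1)$, so $\operatorname{Hom}(E_1, E/E_1) = 0$ follows by testing a putative map against those factors, and Serre duality then kills $\operatorname{Ext}^1(E/E_1, E_1)$. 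What the two approaches buy is different: the paper's computation is shorter and leans on the precise content of Atiyah's Lemma 15, which is special to elliptic curves; your argument is more structural and strictly more general, showing that \emph{any} semistable bundle of negative slope on \emph{any} smooth curve has no sections, with the elliptic hypothesis entering only through the semistability of indecomposables.
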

\begin{proof} Suppose that $\deg(E) = d <0$. As $E$ is indecomposable, $E^*$ is also indecomposable, and so we may apply Lemma 15 of \cite{Ati} to obtain$$\dim H^0(X, E^*) = \deg(E^*) = -d.$$By Serre duality in combination with the triviality of $K_X$, we have $\dim H^1(X, E) = -d.$ Finally, by Riemann-Roch, we have$$\dim H^0(X, E) - \dim H^1(X, E) =\deg(E) = d\;\implies\;\dim H^0(X, E) = 0.$$
\end{proof}
\textbf{Case III:} Let $\deg(E) = -1$. Then, $\dim H^0(X, E) = 0$. On the other hand, $\deg(f_*E) = -1 - nr$. Write $f_*E \cong \mathcal{O}(a_1) \oplus\dots \oplus
\mathcal{O}(a_{2n})$ with $a_1\leq -1,\dots , a_{2n}\leq -1$. From the argument we have used in previous cases, we have unique $i$, say $1$, such that $a_1 = -2$ and others $a_2,.., a_{2n}$ are $-1$. Thus, we produce 
\begin{equation}
f_*E \cong \mathcal{O}(-2)\oplus\mathcal{O}(-1)\oplus\dots \oplus \mathcal{O}(-1).
\end{equation}

There is no general computational strategy immediately available to us if $E$ is a bundle of rank larger than $1$ and if $d$ is a number outside of $0, 1, -1$. We find it useful to consider rank $2$ semistable bundles in this context as well as a covering map $f$ of degree $2$. A semistable bundle can be either indecomposable or decomposable. An indcomposable bundle on an elliptic curve is semistable (cf. \cite{Tu}) and a decomposable bundle is of the form $E\cong L_1 \oplus L_2$ in which $L_1$ and $L_2$ share the same degree. If $E$ is indecomposable, then it suffices to treat the cases where $E$ admits degree in $\{-1, 0, 1, 2\}$.  Should $E$ admit any other degree, we simply adjust our computation by a push-pull projection. We are now left with the only case where $E$ has degree $2$ and we recall some relevant definitions and techniques from \cite{Ati} for this purpose:

\begin{definition}
    Let $X$ be a smooth projective algebraic curve; $E$, a vector bundle of rank $r$ on $X$; and$$0 = E_0 \subsetneq E_1 \subsetneq \cdots \subsetneq E_r = E,$$a filtration of subbundles. Defining $L_i = E_i /E_{i-1}$ for $i = 1,\dots,r$, the list $(L_1 , \dots , L_r )$ is called a \textbf{splitting} of $E$. A splitting $(L_1,\dots, L_r)$ of $E$ is said to be \textbf{maximal} if, for each $i$ in $1\leq i\leq r$, we have that $L_i$ is a line subbundle of maximal degree in the successive quotient of $E$ by the line bundles $L_0,\dots,L_{i-1}$ (in that order).
\end{definition}

To be clear about the successive quotient, we mean that $L_1$ must be maximal in $E/L_0\cong E$, that $L_2$ must be maximal in $((E/L_0))/L_1$, that $L_3$ must be maximal in $(((E/L_0)/L_1)/L2)$, and so on.

\begin{lemma}\label{maxl}
    \emph{(Lemma 11 \cite{Ati})} Let $E$ be an indecomposable vector bundle of rank $r$ and degree $r$ over an elliptic curve $X$. Then $E$ has a maximal splitting $(L,\dots ,L)$ with $\deg(L) = 1$.
\end{lemma}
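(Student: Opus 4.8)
The plan is to argue by induction on the rank $r$, combining Atiyah's structure theory for indecomposable bundles on an elliptic curve with the semistability fact quoted above. The base case $r=1$ is immediate: $E$ is a line bundle of degree $1$, and $(E)$ is its (trivially maximal) splitting. For the inductive step, the first move is to pin down the maximal possible degree of a line subbundle. Since $E$ is indecomposable it is semistable, and $\mu(E) = \deg(E)/\mathrm{rank}(E) = r/r = 1$; hence every line subbundle $\ell \subset E$ satisfies $\deg(\ell) = \mu(\ell) \le \mu(E) = 1$. Thus a line subbundle of maximal degree has degree at most $1$, and what remains is to exhibit one of degree exactly $1$ together with a well-behaved quotient.

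The key structural input I would use is Atiyah's classification of indecomposable bundles of rank $r$ and degree $0$: each such bundle is isomorphic to $F_r \otimes \lambda$ for a unique $\lambda \in \mathrm{Pic}^0(X)$, where $F_r$ is Atiyah's distinguished bundle built from iterated nonsplit self-extensions of $\mathcal{O}_X$, sitting in $0 \to \mathcal{O}_X \to F_r \to F_{r-1} \to 0$ with $F_1 = \mathcal{O}_X$. Tensoring $E$ with a fixed line bundle of degree $-1$ yields an indecomposable bundle of rank $r$ and degree $0$, so after untwisting we obtain $E \cong F_r \otimes L$ for some line bundle $L$ with $\deg(L) = 1$.

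Granting this presentation, the filtration essentially writes itself. The canonical filtration $0 = V_0 \subset V_1 \subset \cdots \subset V_r = F_r$ with $V_i \cong F_i$ and $V_i/V_{i-1} \cong \mathcal{O}_X$ tensors up to a filtration $E_i := V_i \otimes L$ of $E$ whose successive quotients are all isomorphic to $\mathcal{O}_X \otimes L = L$, a line bundle of degree $1$; this is the candidate splitting $(L,\dots,L)$. To verify maximality I would check the defining condition one stage at a time: at the $i$-th step the relevant successive quotient is $E/E_{i-1} \cong (F_r/V_{i-1}) \otimes L \cong F_{r-i+1} \otimes L$, which is again indecomposable of rank and degree both equal to $r-i+1$, hence semistable of slope $1$; so each of its line subbundles has degree $\le 1$, and the copy of $L$ arising as $F_1 \otimes L$ realizes this maximum. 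Iterating through all $r$ stages produces exactly the maximal splitting $(L,\dots,L)$ with $\deg(L)=1$.

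The step that I expect to carry the real weight is the identification $E \cong F_r \otimes L$, i.e. invoking Atiyah's classification of degree-$0$ indecomposables; once that presentation is secured, the filtration and the verification of maximality are formal consequences of semistability and the behaviour of the $F_r$-filtration under tensoring by $L$. A self-contained alternative avoiding the full classification would instead produce the degree-$1$ line subbundle directly---for instance as the saturation of the essentially unique global section of $E$ vanishing at the point $p$ with $L \cong \mathcal{O}_X(p)$---and then argue that the quotient remains indecomposable of rank $r-1$ and degree $r-1$ so the induction proceeds; proving indecomposability of the quotient is the delicate point on that route, which is precisely why leaning on Atiyah's structure theorem is the cleaner path.
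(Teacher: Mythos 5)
Your proof is correct, but a point of comparison worth knowing: the paper itself gives no proof of this statement at all --- it is quoted verbatim as Lemma 11 of Atiyah's paper --- so the only real benchmark is Atiyah's original argument, which runs quite differently. Atiyah proves the lemma with the maximal-splitting calculus developed in Part I of his paper (existence of maximal splittings and degree estimates relating the $\deg L_i$), \emph{before} and as an ingredient for the classification $\mathcal{E}(r,0)=\{F_r\otimes\lambda:\lambda\in\operatorname{Pic}^0(X)\}$ that you invoke; your route inverts that logical order. As a standalone argument this is perfectly sound, since the degree-zero classification and the semistability of indecomposables (which the present paper takes from Tu) admit independent modern proofs, and your derivation is then cleaner and shorter than Atiyah's --- but one should be aware that spliced into Atiyah's own development it would be circular. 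Two small points in your write-up deserve a line rather than an assertion: first, the identification $F_r/V_{i-1}\cong F_{r-i+1}$ is most painlessly arranged by \emph{defining} $V_i$ as the kernel of the iterated surjection $F_r\to F_{r-1}\to\cdots\to F_{r-i}$, which simultaneously gives $V_i/V_{i-1}\cong\ker(F_{r-i+1}\to F_{r-i})\cong\mathcal{O}_X$ and avoids having to prove $V_i\cong F_i$ via nonsplitness of the intermediate extensions; second, to apply the definition of maximal splitting in the bundle (not sheaf) sense you should note that $E_i/E_{i-1}\cong L$ is saturated in $E/E_{i-1}$, which is immediate because the quotient $(E/E_{i-1})/(E_i/E_{i-1})\cong F_{r-i}\otimes L$ is locally free. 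Finally, the induction on $r$ announced in your opening is never actually used --- once $E\cong F_r\otimes L$ is granted, the filtration argument is direct --- so that framing can simply be dropped.
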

Now consider an indecomposable vector bundle $E$ of rank $2$ and degree $2$. It admits a subbundle $L$ of rank $1$ with $\deg(L) = 1$ and $L \cong E/L$. We recall that the pushforward operation of bundles commutes with the quotients. Also we mention from our previous computation that $f_*L \cong \mathcal{O}\oplus \mathcal{O}(-1)$ while $L$ is a line bundle of degree $1$. Using these we have $\mathcal{O} \oplus \mathcal{O}(-1) \cong f_*E/(\mathcal{O} \oplus \mathcal{O}(-1))$. Thus, we can write the transition data of $f_*E$ directly as
\begin{equation}\label{tran}
g_{\alpha\beta}(z) = \begin{bmatrix}
  \begin{matrix}
  1 & 0 \\
  0 & \frac{1}{z}
  \end{matrix}
  & & h_{\alpha\beta}(z) \\

  \mathbf{0} & &
  \begin{matrix}
  1 & 0 \\
  0 & \frac{1}{z}
  \end{matrix}
\end{bmatrix}
\end{equation}
The function $h_{\alpha\beta}(z)$ uniquely corresponds to an element in $H^1(\mathbb{P}^1, \text{Hom}(E/F, F))$, which is$$H^1(\mathbb{P}^1, \text{End}(\mathcal{O}\oplus \mathcal{O}(-1)))$$in \ref{tran}. On the other hand, by applying aforementioned properties of the endomorphism bundle in combination with Serre duality, we obtain $\dim_{\mathbb{C}} H^1(\mathbb{P}^1, \text{End}(\mathcal{O}\oplus \mathcal{O}(-1))) = 0$. The zero element in the vector space uniquely corresponds to the splitting of $E$ as $F\oplus E/F$ (cf. Proposition 2 in \cite{Ati2}). Taking all of this together, we may write \ref{tran} as $$g_{\alpha\beta}(z) = \begin{bmatrix}
  \begin{matrix}
  1 & 0 \\
  0 & \frac{1}{z}
  \end{matrix}
  & & \mathbf{0} \\

  \mathbf{0} & &
  \begin{matrix}
  1 & 0 \\
  0 & \frac{1}{z}
  \end{matrix}
\end{bmatrix},$$and finally
\begin{equation}
f_*E\cong \mathcal{O} \oplus \mathcal{O} \oplus \mathcal{O}(-1) \oplus \mathcal{O}(-1).
\end{equation}
However, if $F$ is a subbundle of $E$ then $H^1(X, \text{Hom}(E/F, F))$ is not necessarily trivial. For example, one can choose $E \cong \mathcal{O}(2)  \oplus \mathcal{O}(4) \oplus \mathcal{O}(6) \oplus \mathcal{O}(6)$ and $F \cong \mathcal{O}(4) \oplus \mathcal{O}(6)$.\\

We end this section with a straightforward lemma:

\begin{lemma}\label{semi}
Let $X$ be an algebraic curve. If $M_1$ and $M_2$ are line bundles with same degree over $X$ then $M_1\oplus M_2$ is semistable.\end{lemma}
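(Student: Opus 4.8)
The plan is to reduce semistability of the rank-$2$ bundle $E = M_1\oplus M_2$ to a degree inequality for line subbundles, and then to deduce that inequality from the elementary fact that a nonzero morphism of line bundles on a curve cannot increase degree. Set $d = \deg(M_1) = \deg(M_2)$, so that $\deg(E) = 2d$ and $\mu(E) = d$. Since $E$ has rank $2$, every nontrivial proper subbundle $F\subset E$ has rank $1$; hence it suffices to verify the slope inequality $\mu(F)\leq \mu(E)$ for an arbitrary line subbundle $L = F$ of $E$.

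First I would record the key sublemma: if $L$ and $M$ are line bundles on a projective curve and there is a nonzero bundle morphism $\varphi\colon L\to M$, then $\deg(L)\leq\deg(M)$. This is because $\varphi$ is a nonzero global section of $\operatorname{Hom}(L,M)\cong L^{-1}\otimes M$, and any line bundle admitting a nonzero global section has nonnegative degree; thus $\deg(M) - \deg(L) = \deg(L^{-1}\otimes M)\geq 0$.

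Next I would apply this to the two projections. Let $L\hookrightarrow M_1\oplus M_2$ be a line subbundle, and let $p_1\colon L\to M_1$ and $p_2\colon L\to M_2$ be the compositions of the inclusion with the two projections of $E$. Since the inclusion $L\hookrightarrow M_1\oplus M_2$ is nonzero, at least one of $p_1, p_2$ is a nonzero morphism of line bundles. If $p_j\neq 0$, the sublemma gives $\deg(L)\leq\deg(M_j) = d$. In either case $\mu(L) = \deg(L)\leq d = \mu(E)$, so the slope inequality holds for every line subbundle, and $E = M_1\oplus M_2$ is semistable.

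There is no genuine obstacle here; the only points requiring mild care are purely formal. One should note that a line subbundle is automatically saturated, so the projection $p_j$ need not itself be a subbundle inclusion, but this is immaterial since the sublemma applies to \emph{any} nonzero morphism of line bundles. One should also observe that the argument yields only the non-strict inequality $\mu(L)\leq\mu(E)$ (equality occurring precisely when $L$ maps isomorphically onto a factor after twisting), which is exactly semistability and explains why $M_1\oplus M_2$ need not be stable.
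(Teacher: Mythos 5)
Your proof is correct and follows essentially the same route as the paper's: compose the inclusion of a line subbundle with the two projections, note at least one composite is nonzero, and deduce the degree inequality from the nonvanishing of a global section of $\operatorname{Hom}(L, M_j)$. Your explicit statement of the sublemma and the remark on why only the non-strict inequality (hence semistability, not stability) results are minor elaborations of the same argument.
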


This result is true in arbitrary ranks, but we restrict ourselves to rank $1$.

\begin{proof} Let us consider a sub-line bundle $M$ of $M_1\oplus M_2$. Then, one of the following bundle morphisms must be nonzero: $M\xrightarrow[]{i} M_1\oplus M_2\xrightarrow[]{\pi_1} M_1;~M\xrightarrow[]{i} M_1\oplus M_2\xrightarrow[]{\pi_2} M_2$, where $\pi_1$ and $\pi_2$ are the bundle projection maps on $M_1$ and $M_2$. This leads to one of $H^0(X, \text{Hom}(M, M_1))$ or $H^0(X, \text{Hom}(M, M_2))$ being nontrivial. Thus, $\deg(M)\leq \deg(M_1) = \deg(M_2)$. Thus $\mu(M)\leq\mu(M_1\oplus M_2)$.
\end{proof}

Now, let us choose a smooth $\mathcal{O}(2)$-twisted spectral elliptic curve and its degree $2$ spectral covering map $\pi$ on $\mathbb{P}^1$. We denote the respective spectral polynomial by $p$. The spectral correspondence suggests that a rank $4$ semistable co-Higgs sheaf which $p$ annihilates must be the pushforward of a semistable rank $2$ bundle $M$ on the spectral curve. If $M$ is indecomposable, it is difficult to capture the matrix form of the co-Higgs pair $(\pi_*M, \pi_*\eta)$, although we have course already characterized the vector bundles which qualify as underlying bundles of such a pairs. We choose now a decomposable semistable bundle $M = L_1\oplus L_2$. It suffices to consider these two cases:\\ 

\begin{itemize}
\item $\deg(L_1) = \deg(L_2) = 0$; and\\
\item $\deg(L_1) = \deg(L_2) = 1$.\\
\end{itemize}

These degree choices lead directly to the following bundles $\pi_*(L_1 \oplus L_2)$:\\

\begin{itemize}
\item $\mathcal{O}\oplus\mathcal{O}(-1)\oplus \mathcal{O}(-1)\oplus\mathcal{O}(-2)$;\\
\item $\mathcal{O}\oplus\mathcal{O}\oplus \mathcal{O}(-2)\oplus\mathcal{O}(-2)$;\\
\item $\mathcal{O}(-1)\oplus\mathcal{O}(-1)\oplus \mathcal{O}(-1)\oplus\mathcal{O}(-1)$; and\\
\item $\mathcal{O}\oplus\mathcal{O}\oplus \mathcal{O}(-1)\oplus\mathcal{O}(-1)$.\\
\end{itemize}

In the light of the spectral correspondence, $\pi_*(L_1 \oplus L_2, \eta) = (E_1, \phi_1)\oplus (E_2, \phi_2)$ is semistable and $E_1$ and $E_2$ have the same degree. The characteristic polynomial of $(E_i, \phi_i)$ is $p$ and so $(E_i, \phi_i)$ is stable. Furthermore, each $(E_i, \phi_i)$ fits into a Jordan-H\"older filtration $0\subsetneq (E_i, \phi_i) \subsetneq (E, \phi)$.

\begin{remark}
In the above construction, a Jordan-H\"older series of co-Higgs sheaves is obtained immediately. The pairs $(E_i, \phi_i)$ are stable for $i = 1, 2$ and each of them induces a Jordan-H\"older co-Higgs subsheaf within their parent pair. \end{remark}

In any case, a moduli theoretic description of the nonabelian `fiber' is given by a result in \cite{Tu}:

\begin{theorem}
   \emph{(Theorem 1 \cite{Tu})} The moduli space $\mathcal{M}_{n,d}(C)$ of S-equivalence classes of semistable bundles of rank $n$, degree $d$ over an elliptic curve $C$ is isomorphic to the $h$-th symmetric product, $S^hC$, of the curve $C$, where $h=\gcd(n,d)$. 
\end{theorem}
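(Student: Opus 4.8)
The plan is to reduce the classification of S-equivalence classes to a Jordan--Hölder analysis and then to identify the resulting combinatorial datum with a symmetric product. Writing the slope in lowest terms as $\mu = d/n = d'/n'$, so that $n' = n/h$ and $d' = d/h$, I would first recall that two semistable bundles are S-equivalent exactly when their associated graded objects $\mathfrak{gr}$ agree, and that each such graded object is a direct sum of stable bundles, all of slope $\mu$ (this is the bundle analogue of Remark \ref{Jor-Hol}). Thus the set underlying $\mathcal{M}_{n,d}(C)$ is the set of unordered collections of stable bundles of slope $\mu$ whose ranks sum to $n$ and whose degrees sum to $d$.

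The second step is to pin down the stable building blocks. Using Atiyah's classification \cite{Ati}, I would argue that on an elliptic curve a stable bundle necessarily has coprime rank and degree; equivalently, every stable bundle of slope $\mu$ has rank exactly $n'$ and degree exactly $d'$. The underlying point is that an indecomposable bundle whose rank and degree share a common factor $g>1$ carries a canonical slope-preserving subbundle, hence fails stability, which one extracts from the structure of Atiyah's bundles $F_r$ and the tensoring operation $E \mapsto E \otimes F_r$. Since $\gcd(n',d')=1$, semistability and stability coincide in rank $n'$, so a graded object as above is a direct sum of exactly $h = n/n'$ stable bundles, each a point of the coprime moduli space $\mathcal{M}_{n',d'}(C)$.

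The third step is the coprime case $\mathcal{M}_{n',d'}(C) \cong C$. Here I would invoke Atiyah's theorem that the determinant morphism $E \mapsto \det E$ is a bijection onto $\operatorname{Pic}^{d'}(C)$: for each line bundle of degree $d'$ there is a unique stable bundle of rank $n'$ realizing it as determinant. Composing with the Abel--Jacobi identification $\operatorname{Pic}^{d'}(C)\cong C$ then yields $\mathcal{M}_{n',d'}(C)\cong C$. Assembling the pieces, an S-equivalence class of semistable $(n,d)$-bundles is precisely an unordered $h$-tuple of points of $\mathcal{M}_{n',d'}(C)\cong C$, that is, a point of $S^hC$; this furnishes the desired set-theoretic bijection.

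The main obstacle, and the step requiring the most care, is twofold. First, the rigidity input in the second step --- that a common factor in $(\operatorname{rank},\deg)$ forces a slope-preserving destabilizing subbundle --- must be derived cleanly from Atiyah's indecomposables rather than merely asserted. Second, the bijection must be promoted to an isomorphism of varieties. For the latter I would construct a flat family of semistable bundles over $S^hC$ whose fibers realize each graded object, and then invoke the universal property of the coarse moduli scheme $\mathcal{M}_{n,d}(C)$ together with the normality of $S^hC$ to conclude that the resulting bijective morphism is an isomorphism.
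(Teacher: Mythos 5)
The paper does not prove this statement at all --- it is imported verbatim as Theorem 1 of \cite{Tu} --- so the relevant comparison is with Tu's original argument, which your proposal reconstructs in essentially the standard way. Your first three steps are sound and are exactly the Atiyah--Tu route: S-equivalence classes are classified by their graded objects (the bundle analogue of Remark \ref{Jor-Hol}, with uniqueness of the multiset of stable summands coming from Krull--Schmidt); stability on an elliptic curve forces coprime rank and degree, since an indecomposable $E$ with $\gcd(\operatorname{rank},\deg)=g>1$ is of the form $E'\otimes F_g$ in Atiyah's classification \cite{Ati}, and the inclusion $\mathcal{O}\hookrightarrow F_g$ yields the equal-slope subbundle $E'\hookrightarrow E$ you allude to; and the coprime case $\mathcal{M}_{n',d'}(C)\cong\operatorname{Pic}^{d'}(C)\cong C$ via the determinant is Atiyah's theorem. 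This correctly identifies the underlying set with $S^hC$.

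The genuine gap is in your final step, and it is a directional error in the appeal to Zariski's Main Theorem. Your flat family (built over $C^h$ from a Poincar\'e bundle in the coprime case, then descended through the finite quotient $C^h\to S^hC$ since the classifying morphism is symmetric --- note there is no tautological family over $S^hC$ itself) produces, via the coarse moduli property, a bijective morphism $S^hC\to\mathcal{M}_{n,d}(C)$. To conclude that a bijective morphism in characteristic $0$ is an isomorphism, you need normality of the \emph{target}, not the source: the normalization of a cuspidal curve is a bijective morphism from a smooth variety that is not an isomorphism, so the smoothness of $S^hC$ that you invoke buys nothing here. The repair is either to prove that $\mathcal{M}_{n,d}(C)$ is normal --- which holds because it is a GIT quotient of the smooth locus of semistable points in a Quot scheme, and quotients of normal varieties by reductive groups are normal --- or to do what Tu effectively does and construct the morphism in the opposite direction, $\mathcal{M}_{n,d}(C)\to S^hC$, so that the known smoothness of the symmetric product of a smooth curve can legitimately be applied to the target. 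With that one correction your outline becomes a complete proof along the same lines as \cite{Tu}.
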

\begin{remark}
    The moduli space of stable bundles of rank $n$ and degree $d$ is isomorphic to $C$ when $n$ and $d$ are relatively prime and empty otherwise.
\end{remark}
\section{A Composite projection formula}\label{Ite}

In the rest of the article, we illustrate another side of the classical spectral correspondence. In category theory, it is often asked if a morphism between two objects can be decomposed into intermediate morphisms between other objects. It is a perfectly natural question to ask if a multifold categorical correspondence can be established from the spectral viewpoint. We boil this question down to an investigation of the decomposition of nonconstant holomorphic maps in the category of compact Riemann surfaces. The morphisms here are spectral covering maps for such surfaces. Finally, a composite version of the holomorphic projection formula lifts our study to the category of vector bundles and twisted pairs. We focus primarily on the case of the projective line as the base. As a result of our prior arguments and computations over $\mathbb P^1$, we may embed the Jacobian of a generic cyclic spectral curve into the open subscheme of stable Hitchin pairs on an intermediate curve that is branched over $\mathbb P^1$ and which itself has, as a branched cover, the initial spectral curve. We first prove a composite version of the relevant ``push-pull' projection formula. Many of the relevant tools and perspectives that we rely upon follow from \cite{gun}. The following theorem is well known, but we include a straightforward proof for completeness.
\begin{theorem}\label{func}
Let $f: (X, \mathcal{O}_X) \to (Y, \mathcal{O}_Y)$ and $g: (Y, \mathcal{O}_Y) \to (Z, \mathcal{O}_Z)$ be morphisms of complex manifolds. Consider a sheaf $\mathcal{F}$ of $\mathcal{O}_X$-modules and a sheaf $\mathcal{G}$ of $\mathcal{O}_Z$-modules. Then, \emph{(i)} $(g\circ f)^*\mathcal{G} \cong f^*g^*\mathcal{G}$; and \emph{(ii)} $(g\circ f)_*\mathcal{F} \cong g_*f_*\mathcal{F}$.
\end{theorem}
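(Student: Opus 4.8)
The plan is to dispatch (ii) first by a direct computation at the level of presheaves and then to obtain (i) formally from (ii) by invoking the adjunction between pullback and pushforward. For (ii), I would take an open set $W\subseteq Z$ and simply unwind the definition of the direct image. On one hand,
$$(g\circ f)_*\mathcal{F}(W) = \mathcal{F}\big((g\circ f)^{-1}(W)\big) = \mathcal{F}\big(f^{-1}(g^{-1}(W))\big),$$
while on the other hand
$$(g_*f_*\mathcal{F})(W) = (f_*\mathcal{F})(g^{-1}(W)) = \mathcal{F}\big(f^{-1}(g^{-1}(W))\big).$$
These agree as abelian groups, and in fact as $\mathcal{O}_Z(W)$-modules, since in both cases the module structure is induced by the composite ring map $\mathcal{O}_Z(W)\to\mathcal{O}_Y(g^{-1}(W))\to\mathcal{O}_X(f^{-1}(g^{-1}(W)))$. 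The identification is manifestly compatible with restriction maps, so the two presheaves coincide on the nose; hence their associated sheaves are equal, giving the natural isomorphism of (ii).

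For (i), I would use the fact that $f^*$ is left adjoint to $f_*$ and that $g^*$ is left adjoint to $g_*$, as functors on the respective categories of sheaves of modules. Composing the two adjunctions yields, for every $\mathcal{O}_X$-module $\mathcal{F}$ and every $\mathcal{O}_Z$-module $\mathcal{G}$, natural isomorphisms
$$\operatorname{Hom}_{\mathcal{O}_X}(f^*g^*\mathcal{G}, \mathcal{F}) \cong \operatorname{Hom}_{\mathcal{O}_Y}(g^*\mathcal{G}, f_*\mathcal{F}) \cong \operatorname{Hom}_{\mathcal{O}_Z}(\mathcal{G}, g_*f_*\mathcal{F}).$$
By part (ii) the right-hand term is naturally isomorphic to $\operatorname{Hom}_{\mathcal{O}_Z}(\mathcal{G}, (g\circ f)_*\mathcal{F})$, so $f^*g^*$ and $(g\circ f)^*$ are both left adjoint to the same functor $(g\circ f)_*$. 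Uniqueness of adjoints then furnishes the desired natural isomorphism $(g\circ f)^*\mathcal{G}\cong f^*g^*\mathcal{G}$.

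Should a more hands-on argument be preferred, I would instead verify (i) by comparing stalks. Writing $f^*\mathcal{G} = f^{-1}\mathcal{G}\otimes_{f^{-1}\mathcal{O}_Y}\mathcal{O}_X$ and using $(f^{-1}\mathcal{G})_x\cong\mathcal{G}_{f(x)}$, one computes
$$\big((g\circ f)^*\mathcal{G}\big)_x \cong \mathcal{G}_{g(f(x))}\otimes_{\mathcal{O}_{Z,g(f(x))}}\mathcal{O}_{X,x},$$
while on the other side
$$\big(f^*g^*\mathcal{G}\big)_x \cong \Big(\mathcal{G}_{g(f(x))}\otimes_{\mathcal{O}_{Z,g(f(x))}}\mathcal{O}_{Y,f(x)}\Big)\otimes_{\mathcal{O}_{Y,f(x)}}\mathcal{O}_{X,x},$$
and associativity of the tensor product (change of rings) identifies the two as $\mathcal{O}_{X,x}$-modules. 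The main obstacle in this second route is the bookkeeping around the inverse-image functor $f^{-1}$, which is defined by sheafifying a presheaf of direct limits: one must check that the stalk identifications are natural in $\mathcal{G}$ and that the change-of-rings isomorphism glues to a genuine isomorphism of sheaves rather than a merely stalk-wise one. This is exactly the subtlety that the adjunction argument sidesteps, which is why I would present the adjunction proof as the primary one and relegate the stalk computation to a remark.
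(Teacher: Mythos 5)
Your proposal is correct, but for part (i) it takes a genuinely different route from the paper. Your proof of (ii) is the same direct unwinding the paper gives, with one small simplification available to you: no passage to associated sheaves is needed, since the direct image of a sheaf is already a sheaf, so the two presheaves literally coincide. For (i), the paper argues concretely: it writes $f^*g^*\mathcal{G} = f^{-1}\bigl(g^{-1}\mathcal{G}\otimes_{g^{-1}\mathcal{O}_Z}\mathcal{O}_Y\bigr)\otimes_{f^{-1}\mathcal{O}_Y}\mathcal{O}_X$, invokes the canonical identifications $(g\circ f)^{-1}\mathcal{G}\cong f^{-1}g^{-1}\mathcal{G}$ and the compatibility of $f^{-1}$ with tensor products, and collapses the expression by associativity of the tensor product, verifying each canonical map on stalks via $(f^{-1}g^{-1}\mathcal{G})_x\cong \mathcal{G}_{g(f(x))}$. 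This is essentially the computation you relegate to a remark --- except that the paper handles the subtlety you correctly flag in the standard way: it works with canonical \emph{morphisms} of sheaves throughout and only \emph{checks} that they are isomorphisms stalkwise, so there is no gluing of merely pointwise isomorphisms. Your primary argument instead derives (i) from (ii) by composing the adjunctions $f^*\dashv f_*$ and $g^*\dashv g_*$ and appealing to uniqueness of left adjoints; this is sound, since both $f^*g^*$ and $(g\circ f)^*$ are left adjoint to $(g\circ f)_*\cong g_*f_*$ and your chain of $\operatorname{Hom}$-isomorphisms is natural in both $\mathcal{F}$ and $\mathcal{G}$, which is exactly what uniqueness requires. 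What the adjunction route buys is automatic naturality and a clean bypass of the bookkeeping around the sheafification implicit in $f^{-1}$; what it costs is explicitness --- the paper's chain exhibits the isomorphism concretely, which is convenient for the local manipulations that follow (the projection formula and its composite version) --- and it presupposes the pullback--pushforward adjunction for ringed spaces, a fact of roughly the same depth as the statement being proved, so the overall logical economy of the two arguments is comparable.
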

\begin{proof} To prove (i) we use sheaf isomorphisms of $\mathcal{O}_X$-modules given as $(g \circ 
 f)^{-1}\mathcal{G} \cong f^{-1}g^{-1}\mathcal{G}$ and $$f^{-1}(g^{-1}\mathcal{G} \bigotimes\limits_{g^{-1}\mathcal{O}_Z} \mathcal{O}_Y) \cong f^{-1}g^{-1}\mathcal{G} \bigotimes\limits_{f^{-1}g^{-1}\mathcal{O}_Z} f^{-1}\mathcal{O}_Y.$$
Recall that a given morphism of sheaves is an isomorphism of sheaves if and only if it boils down to an isomorphism of the respective stalks at points. These sheaf isomorphisms can be verified by the isomorphisms of the stalks at points. At each point $y\in Y$
$(g^{-1}\mathcal{G})_y \cong \mathcal{G}_{g(y)}$. Thus $(f^{-1}g^{-1}\mathcal{G})_x \cong (g^{-1}\mathcal{G})_{f(x)} \cong \mathcal{G}_{g(f(x))}$ at each point $x\in X$. Keeping a bimodule structure of sheaves and stalks of sheaves in mind, we directly write the sheaf isomorphisms 
\begin{align*}
&f^*g^*\mathcal{G} = f^*(g^{-1}\mathcal{G} \bigotimes\limits_{g^{-1}\mathcal{O}_Z} \mathcal{O}_Y)\\
&= f^{-1}(g^{-1}\mathcal{G} \bigotimes\limits_{g^{-1}\mathcal{O}_Z} \mathcal{O}_Y)\bigotimes\limits_{f^{-1}\mathcal{O}_Z} \mathcal{O}_X \\
&\cong (f^{-1}g^{-1}\mathcal{G} \bigotimes\limits_{f^{-1}g^{-1}\mathcal{O}_Z} f^{-1}\mathcal{O}_Y)\bigotimes\limits_{f^{-1}\mathcal{O}_Y} \mathcal{O}_X\\
&\cong f^{-1}g^{-1}\mathcal{G} \bigotimes\limits_{f^{-1}g^{-1}\mathcal{O}_Z} (f^{-1}\mathcal{O}_Y\bigotimes\limits_{f^{-1}\mathcal{O}_Y} \mathcal{O}_X)\\
&\cong f^{-1}g^{-1}\mathcal{G} \bigotimes\limits_{f^{-1}g^{-1}\mathcal{O}_Z} \mathcal{O}_X\\
&\cong (g \circ f)^{-1}\mathcal{G} \bigotimes\limits_{(g\circ f)^{-1}\mathcal{O}_Z} \mathcal{O}_X = (g \circ f)^*\mathcal{G}.
\end{align*}
Proof of (ii) follows from $(g \circ f)_*\mathcal{F}(U) = g^{-1}f^{-1}(U) = g^{-1}(f_*\mathcal{F}(U)) = (g_*f_*\mathcal{F})(U)$.
\end{proof}

\begin{theorem}\label{push}
    If $f: (X, \mathcal{O}_X) \to (Y, \mathcal{O}_Y)$ is a morphism of ringed spaces and if $\mathcal{F}$ is a sheaf of $\mathcal{O}_X$-modules and $\mathcal{E}$ is a locally free sheaf of $\mathcal{O}_Y$-modules of finite rank then there is a natural isomorphism of sheaves of $\mathcal{O}_Y$-modules $f_*(\mathcal{F}\bigotimes\limits_{\mathcal{O}_X} f^*\mathcal{E}) \cong f_*\mathcal{F}\bigotimes\limits_{\mathcal{O}_Y} \mathcal{E}$.
\end{theorem}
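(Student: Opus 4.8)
The plan is to first construct, for an arbitrary $\mathcal{O}_Y$-module $\mathcal{E}$, a natural $\mathcal{O}_Y$-linear morphism $\Theta:\, f_*\mathcal{F}\otimes_{\mathcal{O}_Y}\mathcal{E}\to f_*(\mathcal{F}\otimes_{\mathcal{O}_X}f^*\mathcal{E})$, and then to verify that $\Theta$ is an isomorphism under the hypothesis that $\mathcal{E}$ is locally free of finite rank. To build $\Theta$ I would invoke the adjunction $(f^*,f_*)$ together with the two facts extracted from the proof of Theorem \ref{func}: that $f^*$ commutes with tensor products and that $f^*\mathcal{O}_Y\cong\mathcal{O}_X$. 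By adjunction, giving an $\mathcal{O}_Y$-linear map into $f_*(\mathcal{F}\otimes f^*\mathcal{E})$ is the same as giving an $\mathcal{O}_X$-linear map $f^*(f_*\mathcal{F}\otimes_{\mathcal{O}_Y}\mathcal{E})\to\mathcal{F}\otimes_{\mathcal{O}_X}f^*\mathcal{E}$; since $f^*(f_*\mathcal{F}\otimes\mathcal{E})\cong f^*f_*\mathcal{F}\otimes f^*\mathcal{E}$, one obtains such a map by tensoring the adjunction counit $f^*f_*\mathcal{F}\to\mathcal{F}$ with $\mathrm{id}_{f^*\mathcal{E}}$. Transporting back across the adjunction yields $\Theta$, which by construction is natural in both $\mathcal{F}$ and $\mathcal{E}$.

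Second, because being an isomorphism of sheaves may be checked locally on $Y$ (indeed stalk by stalk), and because $\mathcal{E}$ is locally free of finite rank, I would pass to an open cover $\{U_\alpha\}$ of $Y$ over which $\mathcal{E}|_{U_\alpha}\cong\mathcal{O}_{U_\alpha}^{\oplus n}$. Restricting $\mathcal{F}$ to $f^{-1}(U_\alpha)$ and using that $f^*$, $f_*$, and $-\otimes-$ all commute with finite direct sums, both sides collapse: the left-hand side becomes $(f_*\mathcal{F})^{\oplus n}|_{U_\alpha}$, while the right-hand side becomes $f_*\!\left(\mathcal{F}\otimes_{\mathcal{O}_X} f^*(\mathcal{O}_{U_\alpha}^{\oplus n})\right)\cong f_*(\mathcal{F}^{\oplus n})\cong (f_*\mathcal{F})^{\oplus n}$, where I use $f^*\mathcal{O}_Y\cong\mathcal{O}_X$ once more. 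One then checks that under these identifications $\Theta$ restricts to the evident identity map, so it is an isomorphism over each $U_\alpha$ and hence globally.

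The genuinely delicate point is not the local computation but the bookkeeping that the locally defined identifications are all induced by the single global morphism $\Theta$ — in other words, that the trivialization used in the second step is compatible with the canonical map built in the first step and is independent of the chosen local frame of $\mathcal{E}$. This is precisely where the naturality of $\Theta$ through the counit does the work: since $\Theta$ is defined without reference to any frame, its restriction to $U_\alpha$ is forced to agree with the direct-sum identification, so no gluing ambiguity can arise. Finiteness of the rank is essential here, as it is exactly what permits commuting $f_*$ past the direct sum; for infinite rank the morphism $\Theta$ need not be an isomorphism.
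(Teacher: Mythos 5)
Your proposal is correct, but there is nothing in the paper to compare it against: the paper states Theorem \ref{push} without proof, treating the projection formula as a known fact (it is, e.g., Exercise II.5.1(d) in Hartshorne, which the paper cites elsewhere) and passing immediately to Corollary \ref{itpush}. What you have written is the canonical argument that the paper leaves implicit: construct the natural comparison map $\Theta\colon f_*\mathcal{F}\otimes_{\mathcal{O}_Y}\mathcal{E}\to f_*(\mathcal{F}\otimes_{\mathcal{O}_X}f^*\mathcal{E})$ by adjunction from the counit $f^*f_*\mathcal{F}\to\mathcal{F}$ tensored with $\mathrm{id}_{f^*\mathcal{E}}$, then check it is an isomorphism after restricting to a trivializing cover of $\mathcal{E}$, using that $f^*$, $f_*$, and the tensor product all commute with \emph{finite} direct sums (for $f_*$ this is just additivity, since finite direct sums are biproducts). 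Your emphasis on the naturality of $\Theta$ in $\mathcal{E}$ is exactly the right point: it is what guarantees that the frame-dependent local identifications are all conjugates of the single global map, so the local isomorphisms assemble without ambiguity; the only step you elide is verifying that $\Theta$ in the base case $\mathcal{E}=\mathcal{O}_Y$ is the composite of the canonical isomorphisms $f_*\mathcal{F}\otimes\mathcal{O}_Y\cong f_*\mathcal{F}\cong f_*(\mathcal{F}\otimes\mathcal{O}_X)\cong f_*(\mathcal{F}\otimes f^*\mathcal{O}_Y)$, a one-line unwinding of the counit. Your closing remark is also accurate and worth keeping: finiteness of the rank is genuinely needed, since $f_*$ does not commute with infinite direct sums, and this is consistent with how the paper uses the result (only for the line bundles $g^*\mathcal{O}(t)$ and their finite-rank relatives in Corollary \ref{itpush} and diagram \ref{mmoc}).
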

The following corollary is immediate from Theorem \ref{push}.
\begin{corollary}\label{itpush}
Let $f: (X, \mathcal{O}_X) \to (Y, \mathcal{O}_Y);~ g: (Y, \mathcal{O}_Y) \to (Z, \mathcal{O}_Z)$ and $h: (X, \mathcal{O}_X)\to (Z, \mathcal{O}_Z)$ be morphisms of complex analytic spaces satisfying $h = g\circ f$. If $\mathcal{F}$ is a sheaf of  $\mathcal{O}_X$-modules and $\mathcal{E}$ is a locally free sheaf of $\mathcal{O}_Z$ modules of finite rank then there is an isomorphism of $\mathcal{O}_Y$-modules $f_*(\mathcal{F}\bigotimes\limits_{\mathcal{O}_X} h^*\mathcal{E}) \cong f_*\mathcal{F}\bigotimes\limits_{\mathcal{O}_Y} g^*\mathcal{E}$.
\end{corollary}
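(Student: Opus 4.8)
The plan is to deduce the statement directly from the single-morphism projection formula of Theorem \ref{push} together with the functoriality of pullback recorded in Theorem \ref{func}. The key point is that Theorem \ref{push} already does all the heavy lifting; what remains is to rewrite $h^*\mathcal{E}$ as an iterated pullback and to check that the intermediate sheaf produced on $Y$ satisfies the hypotheses of Theorem \ref{push}.

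First I would invoke the hypothesis $h = g\circ f$ and part (i) of Theorem \ref{func} to obtain a natural isomorphism $h^*\mathcal{E} \cong f^*g^*\mathcal{E}$ of $\mathcal{O}_X$-modules. Tensoring with $\mathcal{F}$ over $\mathcal{O}_X$ and applying the functor $f_*$, this gives
\begin{equation*}
f_*\left(\mathcal{F}\bigotimes\limits_{\mathcal{O}_X} h^*\mathcal{E}\right) \cong f_*\left(\mathcal{F}\bigotimes\limits_{\mathcal{O}_X} f^*(g^*\mathcal{E})\right).
\end{equation*}

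Next I would observe that $g^*\mathcal{E}$ is itself a locally free sheaf of $\mathcal{O}_Y$-modules of finite rank: the pullback of a locally free sheaf of rank $k$ is again locally free of rank $k$, since on a trivializing open set $\mathcal{E}$ is a finite direct sum of copies of $\mathcal{O}_Z$, one has $g^*\mathcal{O}_Z \cong \mathcal{O}_Y$, and pullback commutes with finite direct sums. Consequently $g^*\mathcal{E}$ meets exactly the requirement imposed on the locally free factor in Theorem \ref{push}. Applying that theorem to the morphism $f$, with $g^*\mathcal{E}$ playing the role of the locally free sheaf on the target, yields
\begin{equation*}
f_*\left(\mathcal{F}\bigotimes\limits_{\mathcal{O}_X} f^*(g^*\mathcal{E})\right) \cong f_*\mathcal{F}\bigotimes\limits_{\mathcal{O}_Y} g^*\mathcal{E}.
\end{equation*}
Chaining the two displayed isomorphisms produces the desired isomorphism of $\mathcal{O}_Y$-modules.

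There is essentially no obstacle here, as the corollary is formal. The only point demanding genuine care is the verification that $g^*\mathcal{E}$ is locally free of finite rank, since Theorem \ref{push} applies only to a locally free factor; once this is secured, the result follows by substitution. A secondary bookkeeping check is that the isomorphism $h^*\mathcal{E}\cong f^*g^*\mathcal{E}$ coming from Theorem \ref{func} is natural, so that it is compatible with tensoring by $\mathcal{F}$ and with the application of $f_*$, which in turn guarantees that the composite isomorphism is natural in $\mathcal{F}$ and $\mathcal{E}$.
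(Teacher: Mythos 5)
Your proposal is correct and matches the paper's intent exactly: the paper declares the corollary ``immediate from Theorem \ref{push},'' and the implicit argument is precisely yours --- rewrite $h^*\mathcal{E}\cong f^*g^*\mathcal{E}$ via Theorem \ref{func}(i), note $g^*\mathcal{E}$ is locally free of finite rank on $Y$, and apply the projection formula to $f$. Your added care about the local freeness of $g^*\mathcal{E}$ and the naturality of the isomorphisms is sound but fills in only routine details the paper leaves unstated.
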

The isomorphism in Corollary \ref{itpush} is our \textit{composite projection formula}. The next remark is the key information that we will explore in the rest of this article.
\begin{remark}\label{midpush}
Under the assumptions in \ref{itpush}, the pushforward along $f$ of an $h^*\mathcal{E}$-twisted pair $(E, \phi)$ is a $g^*\mathcal{E}$-twisted pair on $Y$.
\end{remark}
In particular, we choose compact Riemann surfaces $X, Y, Z$ with nonconstant holomorphic maps $f: X\to Y,~ g: Y\to Z$ and $h:X\to Z$ satisfying $h = g\circ f$. We fix holomorphic vector bundles $F$ on $X$ and $E$ on $Z$. There is an isomorphism of vector bundles from Corollary \ref{itpush} directly $f_*(F \otimes h^*E)\cong f_*F \otimes g^*E$. On the other hand, denote by $\mathcal{O}(\mbox{GL}(r,\mathbb{C}))$ the (multiplicative) sheaf of holomorphic maps, on a compact Riemann surface, valued in nonsingular matrices of order $r$. Recall that a holomorphic vector bundle $E$ on $Z$ is an element $\{g_{\alpha\beta}\} \in H^1(Z, \mathcal{O}(\mbox{GL}(r,\mathbb{C})))$ where $r = \mbox{rank}(E)$. Thus, the pullback bundles $g^*E$ and $f^*g^*E$ have representatives $\{(g_{\alpha\beta}\circ g)\} \in H^1(Y, \mathcal{O}(\mbox{GL}(r,\mathbb{C})))$ and $\{(g_{\alpha\beta}\circ g\circ f)\} \in H^1(X, \mathcal{O}(\mbox{GL}(r,\mathbb{C})))$ respectively. We can, in the same spirit, define twisted bundle pairs on compact Riemann surfaces and interpret Remark \ref{midpush} in the context of holomorphic vector bundles: Let $L$ be a line bundle on $Z$ and $(E, \phi: E\to E\otimes h^*L)$ be a bundle pair on $X$. Then $(f_*E, f_*\phi: f_*E\to f_*E\otimes g^*L)$ appears as a ($g^*L$-twisted) bundle pair on $Y$. We organize this discussion in the following commutative diagram:

\vspace{5pt}

\begin{equation}\label{mmoc}
\begin{tikzcd}
 (E, h^*L, \phi) \to X \arrow[dr]{}[swap]{h} \arrow{rr}{f}[swap]{\scalebox{3.5}{$\circlearrowright$}} &&  Y\leftarrow (f_*E, g_*L, f_*\phi) \arrow{dl}{g} \\
                                & (h_*E, L, h_*\phi)\to Z
\end{tikzcd}
\end{equation}

So it is very much clear that the success in establishing a composite or a multifold (or an \textit{iterated}) spectral correspondence relies on the existence of a factorization of a spectral covering map.
\section{Imprimitive subgroups of a permutation group and monodromy groups}
The construction of the iterated spectral covers that we have addressed relates to a study of the Galois groups of the holomorphic covers. To serve this purpose, we briefly set the stage for a problem based on the modern aspect of computational group theory and enumerative algebraic geometry. The topological branched covers of the oriented $2$-manifolds constitute a research area overlapping with the computational theory of the permutation groups, graph theory, combinatorics, partition theory (\cite{Lan,perova}) and the theory of dessins d'enfants (\cite{wolf}). The classification of subgroups of the finite permutation groups into the \textit{primitive} and the \textit{imprimitive} categories opens up an angle to study the branched covers of $2$-surfaces. About 100 years ago, Ritt (\cite{ritt}) initiated techniques for studying the $2$-sphere, which lead to the modern study of the \textit{cartographic groups} or the \textit{monodromy groups} via the \textit{generalized constellations}. Ritt's theorem suggests that such groups are indispensable in illustrating factorization of the branched covers in to intermediate covers. A standard source for the details of the \textit{oriented hypermaps} and the cartographic groups is Chapter 1 \cite{Lan}. A stronger form of Ritt's theorem plays a decisive role in the Hurwitz problem as well (constructing branched maps between oriented surfaces with a prescribed branch data) (Lemma 5.2 \cite{perova} and Corollary 5.3 \cite{perova}). Interestingly, the cartographic groups are closely related to the Galois groups of holomorphic maps (cf. p.689 \cite{Joe}).\\

Let $X$ and $Z$ be compact connected Riemann surfaces. Given a nonconstant holomorphic map $\pi: X\to Z$ there is a unique degree $d$ such that fiber of each point $z\in Z$ contains $d$ points in $X$ counting up to multiplicities. There is a finite subset $B\subset Z$ such that $\pi: X' = X\backslash \pi^{-1}(B)\to Z\backslash B$ is a topological covering map of degree $d$ and is thus a local homeomorphism for each point $x\in X' =  X\backslash \pi^{-1}(B)$. Fix a point $z_0\in Z'= Z\backslash B$. Given a loop $\gamma$ based at $z_0$, lifts of $\gamma$ produce $d$ paths in $Z$ permuting points in the fiber of $z_0$. Collection of all such permutations forms a transitive subgroup of $S_d$ (recall that a subgroup $H$ of permutation group of $d$ symbols $S_d$  is said to be a \textit{transitive subgroup} if for each pair of symbols $a_i, a_j$ there is an element $\sigma\in H$ such that $\sigma(a_i) = a_j$) which can be realized as the image of a group homomorphism (cited as the \textit{monodromy representation}) $\rho: \pi_1(Z', z_0)\to S_d$. This transitive subgroup is defined to be the \textit{cartographic group} or the \textit{monodromy group} of $\pi$ at $z_0$. It is important to note that a group homomorphism $\rho: \pi_1(Z', z_0)\to S_d$ with a transitive image gives a compact Riemann surface $X$ with a nonconstant holomorphic map $\pi$ which has branch points in $B$ (p.91 \cite{Ric}).\\

The problem of computing the monodromy groups is, of course, a complicated one in general. It necessitates the application of several numerical techniques simultaneously. One can see this at play in, for example, sections 2.2, 3.1, and 3.2 of \cite{duff}. A closely related problem is the determination of the Galois group of a branched holomorphic cover. A nonconstant branched holomorphic map $\pi: X\to Z$ is not necessarily a Galois cover in the sense that the field extension $\mathcal{M}(X)/\pi^*\mathcal{M}(Z)$ is not Galois. We consider the Galois closure $\mathcal{M}(X)^{\mbox{Gal}}$ of $\mathcal{M}(X)$ and the Galois group $\mathcal{M}(X)^\text{Gal}/\pi^*\mathcal{M}(Z)$. Now, a theorem by Harris \cite{Joe} states that fixing a generic base point $z_0\in Z'$ we can embed the group $\mbox{Gal}(\mathcal{M}(X)^{\mbox{Gal}}/\pi^*\mathcal{M}(Z))$ in $S_d$ and the image of this embedding is same as the monodromy group. This result opens up a direction of computation for monodromy groups at a generic point --- particularly when we can determine the Galois groups from extensions defined by certain algebraic equations. We refer the reader to Chapter 1 of \cite{Lan} as well as to \cite{Alg} for some background here. 

\begin{definition}
Let $r\in \mathbb{N}$. A group of permutations $G$ over $r$ symbols $\{a_1,.., a_r\}$ is said to be \textbf{imprimitive} if there is a partition $\{B_1,\dots , B_k\}$ of $\{a_1,.., a_r\}$, each of size $l$ with $r >l > 1$, such that for each element $g\in G$ we have $g(B_i) = B_j$; that is, if the image of each block is a block again.
\end{definition}

\begin{example}\label{cyclic1}
    Any cyclic group generated by an $r$-cycle in the permutation group of $r$ symbols is transitive and in case $r$ is composite, it is an imprimitive subgroup. 
\end{example}
\begin{proof} Let $\{a_1,\dots , a_r\}$ be $r$ distinct symbols. It suffices to prove the statement only for $<\sigma = (a_1\dots a_r)>$. Choose $i, j$, then $\sigma^{i - 1}(a_1) = a_i$ and $\sigma^{j - 1}(a_1) = a_j$. Thus $\sigma^{j-1}\circ~\sigma^{1-i}(a_i) = a_j$. Let $u\geq 2$ be a divisor of $r$ and $r = uv$. Organize $v$ blocks namely $B_1 = [a_1, a_{u + 1},\dots , a_{(v - 1)u + 1}],~ B_2 = [a_2, a_{u + 2},\dots , a_{(v - 1)u + 2}],\hdots,~B_u = [a_p, a_{2p},\dots , a_r]$. It suffices to prove that for each $B_i$, $\sigma(B_i) = B_j$ for some $j$ as $\sigma^2(B_i) = \sigma(B_j) = B_k$ and so on. Finally, the last statement is a direct consequence of $\sigma(B_i) = B_{i+1}$ for $i < u$ and $\sigma(B_u) = B_1$. This finishes our argument.
\end{proof}

\begin{example}\label{cyclic2}
    Let $r$ be a composite number. Consider a subgroup of order $r$ of $S_r$ generated by an element $\sigma = \sigma_1\sigma_2$ where $\sigma_1$ and $\sigma_2$ are mutually disjoint cycles. This subgroup is imprimitive. 
\end{example}
\begin{proof} It suffices to find blocks of the same length so that the length divides $r$. Let the lengths of $\sigma_1$ and $\sigma_2$ be $l_1$ and $l_2$ respectively. So, $l_1, l_2 \geq 2$. Let $q = \gcd(l_1, l_2)$ and suppose that $q>1$ (that is, that $l_1$ and $l_2$ are \emph{not} coprime). We make blocks of length $q$ out of $\sigma_1$ and $\sigma_2$ as per the procedure laid out in Example \ref{cyclic1}. Among the remaining $r - (l_1 + l_2)$ elements we pick up identity blocks of length $q$ because $q$ divides $r - (l_1 + l_2)$. Otherwise, let $q = 1$. We assume without loss of generality that $l_1 > l_2$ because $l_1 = l_2$ is not an option. Also we have $l_1.l_2 = \mbox{lcm}\{l_1, l_2\} = r$. As $l_2\geq 2$ we have $r = l_1.l_2\geq 2l_1$. Among the remaining $r - (l_1 + l_2)$ elements choose $l_1 - l_2$ elements (they are mapped to themselves by $\sigma$) and attach to $\sigma_2$ to make a block $\sigma_2'$ of length $l_1$. Thus $\sigma_1$ gives a block of length $l_1$ say $B_{\sigma_1}$, $\sigma_2'$ gives a block of length $l_1$, say $B_{\sigma_2}$ and among the remaining $r - 2l_1$ elements, each mapping to itself by $\sigma$, we choose $\frac{r - 2l_1}{l_1}$ many blocks each of length $l_1$, denoting each of them as $B_i$. Now the subgroup generated by $\sigma$ is imprimitive because the action of $\sigma$ on $S_r$ preserves the blocks due to $\sigma(B_{\sigma_1}) = B_{\sigma_1}$ and $\sigma(B_{\sigma_2'}) = B_{\sigma_2'}$, and so finally $\sigma(B_i) = B_i$.
\end{proof}

\begin{remark}\label{cycl}
If a cyclic subgroup $H$ of order $r$ in $S_r$ is transitive, then it is generated by an $r$-cycle. 
\end{remark}
\begin{proof} Suppose that $H$ is generated by an element $\sigma=\sigma_1\cdots\sigma_k$ written in terms of disjoint cycles $\sigma_1,\dots,\sigma_k$. To begin, suppose that $k\geq2$. Let $b_1$ and $b_2$ be distinct symbols such that $b_1$ appears in $\sigma_1$ and $b_2$ appears in $\sigma_2$. Since $H$ is transitive, there exists an integer $i$ such that $\sigma^i$ maps $b_1$ to $b_2$. Since $\sigma^i = \sigma_1^i\cdots\sigma_k^i$, we observe that $\sigma^i(b_1)$ is a symbol that appears in $\sigma_1^i$ while but $b_2$ does not appear in $\sigma_1^i$. This is contradictory, and so we must have $k=1$. Thus, $\sigma$ is an $r$-cycle, from which the remark now follows.\end{proof}

As an example of a transitive subgroup, we can present the monodromy or cartographic group of an $r : 1$ branched cover. The monodromy group acts transitively on the fiber of an unramified point. If the 3monodromy group is a cyclic group of order $r$ then it is an imprimitive subgroup (cf. Example \ref{cyclic1}).

\begin{definition}
    A map $\pi: Y\to X$ between compact Riemann surfaces is said to be \textbf{factorizable} if there exists a compact Riemann surface $Z$ and nonconstant holomorphic maps $f: Y\to Z$ and $g: Z\to X$, both of degree $> 1$ such that $\pi = g\circ f$.
\end{definition}

For completeness, we develop an abridged yet nonetheless explicit proof of Ritt's theorem that states that a branched cover of surfaces decomposes into two intermediate branched covers precisely when the monodromy or Galois group is imprimitive. A topological argument for Ritt's theorem for connected oriented $2$-manifolds, as in in Theorem 1.7.6 of \cite{Lan}, is achieved via a purposeful engineering of monodromy groups. A similar proof can be achieved for compact Riemann surfaces. Keeping this in mind, we highlight a prominent example of the application of Ritt's theorem:
\begin{corollary}
    If the monodromy/Galois group of an $r:1$ holomorphic cover $\pi$ of Riemann surfaces is a cyclic group of order $r$, then $\pi$ is factorizable if and only if $r$ is composite. 
\end{corollary}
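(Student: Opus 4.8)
The plan is to reduce the statement entirely to the criterion furnished by Ritt's theorem, which asserts that $\pi$ factorizes through a nonconstant intermediate cover (with both constituent maps of degree $>1$) precisely when its monodromy/Galois group is imprimitive. Once this reduction is in place, both directions of the corollary become a purely group-theoretic equivalence, so the core of the argument is to show that, for the cyclic group of order $r$ at hand, imprimitivity is equivalent to compositeness of $r$.

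First I would record the structural shape of the group. The monodromy group of an $r:1$ cover acts transitively on the $r$-point generic fiber, so it is a transitive subgroup of $S_r$; by hypothesis it is cyclic of order $r$. By Remark \ref{cycl}, a transitive cyclic subgroup of order $r$ in $S_r$ is generated by a single $r$-cycle $\sigma$. (By Harris's theorem cited above, the Galois group may be used interchangeably with the monodromy group, so the hypothesis is unambiguous, and it suffices to analyze $\langle\sigma\rangle$.)

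Next I would dispatch the two implications. For the direction ``$r$ composite $\Rightarrow$ factorizable,'' Example \ref{cyclic1} already exhibits, for composite $r = uv$ with $u\geq 2$, an explicit block system preserved by $\langle\sigma\rangle$; this shows $\langle\sigma\rangle$ is imprimitive, and Ritt's theorem then produces the desired factorization into covers of degrees $v$ and $u$. For the converse, suppose $\pi$ is factorizable; Ritt's theorem forces the monodromy group to be imprimitive, so there is a partition of the $r$ symbols into $k$ blocks of a common size $l$ with $1<l<r$ that is permuted by the group. Because these blocks partition the $r$ symbols we have $kl = r$, so $l$ is a proper divisor of $r$ strictly between $1$ and $r$; hence $r$ is composite.

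I do not anticipate a genuine obstacle, as the statement is essentially the specialization of Ritt's theorem to a cyclic monodromy group combined with the elementary fact that a nontrivial block of a transitive group has size properly dividing the degree. The only point demanding care is the bookkeeping that the common block size $l$ with $1<l<r$ is exactly what both Ritt's criterion and the definition of an imprimitive group require, and that the resulting intermediate covers have degrees $l$ and $r/l$, both exceeding $1$, so that the factorization is nontrivial in the sense of the definition of factorizability.
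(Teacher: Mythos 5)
Your argument is correct, and every ingredient you invoke is available in the paper: Remark \ref{cycl} reduces the transitive cyclic group to $\langle\sigma\rangle$ for an $r$-cycle, Example \ref{cyclic1} supplies the block system when $r$ is composite, and your converse is the standard count that equal-size blocks of a transitive group satisfy $kl=r$ with $1<l<r$. However, this is not the proof the paper adopts: immediately after the statement, the authors acknowledge that the corollary ``can be inferred from Example \ref{cyclic1} and Remark \ref{cycl}'' together with Theorem \ref{Ritt} --- precisely your route --- and then declare a preference for ``an alternative argument involving function fields over $\mathbb{C}$.'' That argument runs through Proposition \ref{Intermediate}: $\pi$ is factorizable if and only if there exists a proper intermediate subfield $\pi^*\mathcal{M}(X)\subset K\subset\mathcal{M}(Y)$; since the Galois group is cyclic of order $r$ equal to the degree of the cover, the extension itself is Galois, so the Fundamental Theorem of Galois Theory puts intermediate subfields in bijection with subgroups of $\mathbb{Z}/r\mathbb{Z}$, and a proper nontrivial subgroup exists exactly when $r$ is composite (this is carried out concretely for cyclic spectral covers in Section \ref{compl}, where each divisor $m$ of $r$ yields the unique subgroup of order $m$ and hence an intermediate curve). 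The trade-off is this: your route is short and purely combinatorial, but it inherits its force from Ritt's theorem, which the paper itself only outlines (and whose converse direction, as sketched there, passes through the same Galois-theoretic apparatus anyway); the paper's preferred route bypasses imprimitivity and the Galois closure entirely, and reuses machinery --- Proposition \ref{Intermediate} and the FTGT --- that the authors need again to construct the explicit intermediate covers in Theorem \ref{finalth}. The one genuinely delicate point in your write-up, namely that Theorem \ref{Ritt} is stated for the Galois group of the Galois closure so the cyclic hypothesis must be identified with the monodromy action on a generic fiber, you handled correctly by citing Harris's identification of the two groups.
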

This corollary is a statement included implicitly within Proposition 2.17 of \cite{duff}, and one can infer it from Example \ref{cyclic1} and Remark \ref{cycl}. However, we prefer an alternative argument involving function fields over $\mathbb{C}$. The result \ref{Intermediate} that we develop in the following section is possibly already known; however, we are not aware of a specific reference including a proof.

\section{Factorization through Galois groups of covers}
A map $\pi:Y\to X$ of Riemann surfaces gives a set theoretic inclusion $\pi^*\mathcal{M}(X)\subset\mathcal{M}(Y)$ where $\pi^*\mathcal{M}(X)$ denotes the field of meromorphic functions on $Y$ of the form $\pi\circ f$ with $f\in\mathcal{M}(X)$. If $\pi$ is factorizable there exists an intermediate subfield $\pi^*\mathcal{M}(X)\subset K\subset \mathcal{M}(Y)$. The converse statement is also true: to each intermediate subfield of this field extension, we may assign an intermediate compact Riemann surface and a pair of branched holomorphic covering maps whose composition recovers $\pi$. We will provide a rigorous construction of such an intermediate Riemann surface. The techniques of the construction will connect the three categories-- the compact connected Riemann surfaces, the function fields over $\mathbb{C}$ and, the complex smooth irreducible projective algebraic curves.  
\begin{proposition}\label{Intermediate}
    A nonconstant branched holomorphic map $\pi:Y \to X$ of compact connected Riemann surfaces is factorizable if and only if there exists a proper intermediate subfield $\pi^*\mathcal{M}(X)\subset K\subset \mathcal{M}(Y)$. 
 \end{proposition}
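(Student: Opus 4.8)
The plan is to leverage the contravariant equivalence between the category of compact connected Riemann surfaces with nonconstant holomorphic maps and the category of function fields over $\mathbb{C}$ with $\mathbb{C}$-algebra embeddings, under which a nonconstant holomorphic map $Y\to Z$ of degree $d$ corresponds precisely to a field extension $\mathcal{M}(Z)\hookrightarrow\mathcal{M}(Y)$ with $[\mathcal{M}(Y):\mathcal{M}(Z)]=d$. The forward direction is then essentially formal. Assuming a factorization $\pi=g\circ f$ with $f:Y\to Z$ and $g:Z\to X$ of degrees $>1$, functoriality of pullback gives $\pi^*=f^*\circ g^*$, so that $\pi^*\mathcal{M}(X)=f^*\bigl(g^*\mathcal{M}(X)\bigr)\subseteq f^*\mathcal{M}(Z)\subseteq\mathcal{M}(Y)$. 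I would set $K=f^*\mathcal{M}(Z)$. Since $f^*$ is an injective homomorphism of fields, it restricts to an isomorphism $\mathcal{M}(Z)\cong K$ carrying $g^*\mathcal{M}(X)$ onto $\pi^*\mathcal{M}(X)$; hence $[K:\pi^*\mathcal{M}(X)]=\deg g>1$ and $[\mathcal{M}(Y):K]=\deg f>1$, exhibiting $K$ as a \emph{proper} intermediate field.

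The substance lies in the converse, where the task is to reconstruct the intermediate surface $Z$ from an abstract intermediate field $K$. First I would verify that $K$ is itself a function field over $\mathbb{C}$: it contains $\pi^*\mathcal{M}(X)\cong\mathcal{M}(X)$, which has transcendence degree $1$ over $\mathbb{C}$, and it sits inside $\mathcal{M}(Y)$, which is finite over $\pi^*\mathcal{M}(X)$; thus $K$ is a finitely generated extension of $\mathbb{C}$ of transcendence degree $1$. To realize $K$ geometrically I would pick a transcendental element $t\in K$ and, invoking the primitive element theorem in characteristic $0$, write $K=\mathbb{C}(t)(\alpha)$; the minimal polynomial of $\alpha$ over $\mathbb{C}(t)$ cuts out an irreducible affine plane curve whose normalization and projective completion is a smooth, irreducible, projective curve, equivalently a compact connected Riemann surface $Z$ with $\mathcal{M}(Z)\cong K$.

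With $Z$ in hand, the equivalence of categories supplies the two covering maps directly from the two field inclusions: the inclusion $K=\mathcal{M}(Z)\hookrightarrow\mathcal{M}(Y)$ yields a nonconstant holomorphic map $f:Y\to Z$ with $f^*\mathcal{M}(Z)=K$, and the inclusion $\pi^*\mathcal{M}(X)\hookrightarrow K=\mathcal{M}(Z)$ yields a nonconstant holomorphic map $g:Z\to X$. Functoriality gives $(g\circ f)^*=f^*\circ g^*=\pi^*$, and because the contravariant functor is faithful this forces $g\circ f=\pi$. The properness of the two field inclusions translates into $\deg f=[\mathcal{M}(Y):K]>1$ and $\deg g=[K:\pi^*\mathcal{M}(X)]>1$, so $\pi$ is factorizable.

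The main obstacle is not any single calculation but the careful deployment of the three-way correspondence among Riemann surfaces, function fields, and smooth projective curves---in particular, constructing $Z$ from $K$ and confirming that the induced maps are genuine nonconstant branched covers whose composite is $\pi$. Once the degree-versus-extension-degree dictionary is in place, both implications reduce to the bookkeeping of intermediate fields, so I expect the write-up to hinge on a clean statement and invocation of that equivalence rather than on delicate analysis.
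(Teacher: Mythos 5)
Your proof is correct, and it rests on the same underlying dictionary the paper uses---the contravariant correspondence between compact connected Riemann surfaces and function fields of one variable over $\mathbb{C}$---but you deploy it at a different level of abstraction. The paper proves the proposition formally only for $X=\mathbb{P}^1$: it takes a primitive element $\alpha$ with $E=\pi^*\mathcal{M}(\mathbb{P}^1)(\alpha)$, clears denominators in its minimal polynomial to obtain an irreducible $F(x,y)\in\mathbb{C}[x,y]$, builds the smooth affine locus $C^x$ of the plane curve and compactifies it to $X^F$, and then extends the map $y\mapsto(\pi(y),\alpha(y))$ across finitely many punctures using the quoted extension results (Corollary \ref{extens}), identifying $f^*\mathbf{x}=\pi$ and $f^*\mathbf{y}=\alpha$ via the identity theorem; the intermediate map $g$ is literally the coordinate projection $\mathbf{x}$, and higher genus is only sketched via Theorem \ref{polsol}. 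You instead invoke the anti-equivalence wholesale: construct $Z$ with $\mathcal{M}(Z)\cong K$ by the primitive element theorem plus normalization and projective completion, obtain $f$ and $g$ from the two field inclusions by fullness of the functor, and conclude $g\circ f=\pi$ from $(g\circ f)^*=f^*\circ g^*=\pi^*$ together with faithfulness (which holds because meromorphic functions separate points on a compact Riemann surface, so a map is determined by its pullback). Your route buys uniformity in the genus of $X$ and a shorter argument; its cost is that the fullness statement---every $\mathbb{C}$-algebra embedding of function fields is induced by a nonconstant holomorphic map---is precisely the nontrivial content that the paper reconstructs by hand with its puncture-and-extend construction, so your proof is only as self-contained as the equivalence you cite. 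Your forward direction, taking $K=f^*\mathcal{M}(Z)$, agrees with the paper's, which treats it as immediate.
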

 Let $\pi: Y\to X$ be a nonconstant holomorphic map of finite degree $n$. Then $\pi^*\mathcal{M}(X)\subset\mathcal{M}(Y)$ is a finite field extension of degree $n$. Let $\pi^*\mathcal{M}(X)\subseteq E\subseteq\mathcal{M}(Y)$ is an intermediate field, we will show there exist a compact Riemann surface $Z$, finite covering maps $f: Y\to Z$ and $g: Z\to X$ such that $g\circ f = \pi$ and $f^*\mathcal{M}(Z) = E$. We use following results from p.64 in \cite{Giro}:

 \begin{theorem}
        Let $X_1$ and $X_2$ be compact Riemann surfaces and $\Sigma_1, \Sigma_2$ be finite subsets of $X_1, X_2$ respectively. Assume that $X_1^* = X_1\backslash \Sigma_1$ and $X_2^* = X_2\backslash \Sigma_2$ are isomorphic. Then $X_1$ and $X_2$ are isomorphic.
    \end{theorem}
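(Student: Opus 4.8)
The plan is to promote a given isomorphism $\varphi\colon X_1^*\to X_2^*$ to a biholomorphism $\bar\varphi\colon X_1\to X_2$ by showing that $\varphi$ extends holomorphically across each removed point; the content is entirely local near the punctures. First I would fix a point $p\in\Sigma_1$ together with a holomorphic coordinate chart $U\cong\Delta$ centered at $p$ with $U\cap\Sigma_1=\{p\}$, so that $U\setminus\{p\}\cong\Delta^*$ is an open subset of $X_1^*$. The first claim is that $\varphi(z)$ must approach the deleted set $\Sigma_2$ as $z\to p$. This follows from properness of the homeomorphism $\varphi$: for any compact $K\subset X_2^*$ the preimage $\varphi^{-1}(K)$ is a compact subset of $X_1^*$, hence a closed subset of $X_1$ not containing $p$, so some punctured neighbourhood of $p$ is carried entirely outside $K$. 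Since $X_2$ is compact, every sequence $z_n\to p$ then admits a subsequence along which $\varphi(z_n)$ converges to a point of $\Sigma_2$.

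The key step is to upgrade this to the existence of a single limit. Here I would invoke a cluster-set argument: the cluster set $C(\varphi,p)=\bigcap_{\varepsilon>0}\overline{\varphi(\{0<|z|<\varepsilon\})}$ is a nested intersection of compact connected sets (each punctured disc $\{0<|z|<\varepsilon\}$ is connected and $\varphi$ is continuous), hence itself compact and connected, and by the previous paragraph it is contained in the finite, therefore discrete, set $\Sigma_2$. A connected subset of a discrete set is a single point, so $C(\varphi,p)=\{q\}$ for a unique $q\in\Sigma_2$; equivalently $\lim_{z\to p}\varphi(z)=q$. Reading $\varphi$ in a coordinate chart $V\cong\Delta$ about $q$, it becomes a bounded holomorphic function on $\Delta^*$, so Riemann's removable-singularity theorem furnishes a holomorphic extension across $p$ with value $q$.

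Carrying this out at every point of $\Sigma_1$ yields a holomorphic map $\bar\varphi\colon X_1\to X_2$ extending $\varphi$, and the identical argument applied to $\varphi^{-1}$ produces a holomorphic extension $\overline{\varphi^{-1}}\colon X_2\to X_1$. Since the two composites agree with the identity on the dense open subsets $X_1^*$ and $X_2^*$, they equal the identity everywhere by continuity; hence $\bar\varphi$ is a biholomorphism and $X_1\cong X_2$. As a byproduct, $\bar\varphi$ restricts to a bijection $\Sigma_1\to\Sigma_2$, so in particular the two puncture sets share the same cardinality. I expect the only genuine subtlety to be the passage from ``accumulation in $\Sigma_2$'' to ``a unique limit'': without the connectivity-plus-discreteness observation one cannot a priori exclude essential-singularity behaviour in which $\varphi$ oscillates among several punctures, and it is precisely the finiteness of $\Sigma_2$ that excludes this and renders the removable-singularity theorem applicable.
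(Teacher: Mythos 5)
Your proof is correct. Note that the paper does not actually prove this statement: it is quoted verbatim from \cite{Giro} (p.~64) as a known auxiliary result used to build intermediate covers, so there is no internal proof to compare against. Your argument is the standard one that such references contain, and it is complete: properness of the homeomorphism $\varphi$ (automatic, since $\varphi^{-1}$ is continuous and compact subsets of the Hausdorff space $X_1$ are closed) forces $\varphi(z)$ to leave every compact subset of $X_2^*$ as $z\to p$; the cluster set at the puncture is a nonempty, compact, connected, nested intersection contained in the finite set $\Sigma_2$, hence a single point $q$, which converts subsequential accumulation into a genuine limit; and Riemann's removable singularity theorem in charts about $p$ and $q$ then extends $\varphi$ holomorphically. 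Running the same extension on $\varphi^{-1}$ and invoking density of $X_1^*$ and $X_2^*$ to identify the composites with the identity yields the biholomorphism, so you never need to verify injectivity of the extension directly. You correctly isolate the one genuine subtlety --- ruling out oscillation among several punctures --- and your connectedness-plus-discreteness resolution is exactly right; the only facts used silently (nonemptiness of a nested intersection of nonempty compacta, and connectedness of such an intersection in a Hausdorff space) are standard and valid here.
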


    \begin{theorem}
        Let $Y$ be a compact Riemann surface, $\Sigma\subset Y$ be a finite set. $f^*:X^*\to Y^*$ is an unramified holomorphic covering of finite degree. Then there exists a unique compact Riemann surface $(X^*\subset) X$ such that $f^*$ extends a unique morphism $f: X\to Y$. Moreover $X\backslash X^*$ is a finite set.
    \end{theorem}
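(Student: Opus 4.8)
The plan is to construct $X$ by filling in the punctures of $X^*$ one point at a time, using the classification of finite coverings of the punctured disk, and then to read off compactness from properness and uniqueness from the preceding theorem together with the identity theorem.

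First I would localize at each point of $\Sigma$. Since $\Sigma$ is finite, I can choose for each $p\in\Sigma$ a coordinate chart $(U_p,z)$ with $z(p)=0$, with $z(U_p)$ the unit disk $\mathbb{D}$, and with $U_p\cap\Sigma=\{p\}$; then $U_p^*:=U_p\setminus\{p\}$ is biholomorphic to the punctured disk $\mathbb{D}^*$, and $U_p^*\subset Y^*$. Because $f^*$ is an unramified covering of finite degree $n$, the preimage $(f^*)^{-1}(U_p^*)$ is a finite disjoint union of connected components $V_{p,1},\dots,V_{p,m_p}$, each mapping to $U_p^*$ as a connected finite covering. Invoking the fact that a connected finite covering of $\mathbb{D}^*$ is classified by the index of the corresponding subgroup of $\pi_1(\mathbb{D}^*)\cong\mathbb{Z}$, each $V_{p,j}\to U_p^*$ is biholomorphic to the power map $\mathbb{D}^*\to\mathbb{D}^*$, $w\mapsto w^{k_{p,j}}$, for some integer $k_{p,j}\geq 1$, with $\sum_j k_{p,j}=n$.

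The key step is then to fill each $V_{p,j}$: since it is biholomorphic to a punctured disk, I adjoin a single point $q_{p,j}$ to obtain $\overline{V_{p,j}}\cong\mathbb{D}$, and I extend the covering map across $q_{p,j}$ by the branched model $w\mapsto w^{k_{p,j}}$, which sends the added center to $p$. Setting $X:=X^*\sqcup\{q_{p,j}\}$ and charting $X$ by gluing the charts of $X^*$ with the filled disks $\overline{V_{p,j}}$, I obtain a one-dimensional complex manifold containing $X^*$ as a dense open subset with $X\setminus X^*$ finite (at most $n$ added points over each of the finitely many $p\in\Sigma$). The extended map $f:X\to Y$ is holomorphic, being holomorphic on $X^*$ and equal to the holomorphic model $w\mapsto w^{k_{p,j}}$ on each filled disk; Hausdorffness of $X$ follows from the disjointness of the filling disks together with the Hausdorffness of $X^*$ and $Y$.

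For compactness I would show $f:X\to Y$ is proper: over $Y^*$ it is a finite covering, hence proper, and over each $U_p$ it is, on every component, the proper power map $w\mapsto w^{k_{p,j}}$. Properness is local on the base, so $f$ is proper, and since $Y$ is compact it follows that $X=f^{-1}(Y)$ is compact. For uniqueness, any two holomorphic extensions $f,f':X\to Y$ agree on the dense subset $X^*$ and hence coincide by the identity theorem, while uniqueness of $X$ itself as a compact Riemann surface containing $X^*$ up to the finite added set is exactly the content of the preceding theorem applied with the identity $X^*\to X^*$ (the underlying reason being that the complex structure at each added point is forced by requiring $f$ to be holomorphic of the prescribed local degree). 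I expect the main obstacle to be the local classification step, namely confirming that every component of the preimage of a punctured disk is again a punctured disk carrying the explicit power-map model; once that is in hand, properness, the identity theorem, and the cited uniqueness theorem make the remainder routine.
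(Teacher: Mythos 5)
Your proof is correct, but there is no in-paper proof to compare it against: the paper quotes this theorem verbatim, without proof, from p.~64 of \cite{Giro}, using it (via Corollary \ref{extens}) as an ingredient in the proof of Proposition \ref{Intermediate}. Your puncture-filling argument is precisely the standard proof found in that reference (and in Forster's \emph{Lectures on Riemann Surfaces}, \S 8): localize at each point of $\Sigma$, classify the components of the preimage of a punctured coordinate disk, fill the punctures with the branched model, deduce compactness from properness, and deduce uniqueness from the identity theorem together with the preceding theorem. The one step you flag as the main obstacle does go through: the topological classification of connected degree-$k$ coverings of $\mathbb{D}^*$ by the subgroups $k\mathbb{Z}\leq\pi_1(\mathbb{D}^*)\cong\mathbb{Z}$ produces a covering equivalence of $V_{p,j}\to U_p^*$ with $w\mapsto w^{k_{p,j}}$, and this equivalence is automatically biholomorphic because both covering projections are local biholomorphisms.
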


    \begin{corollary}\label{extens}
        Let $X, Y$ be compact Riemann surfaces and $\Sigma_1\subset X, \Sigma_2\subset Y$ be finite subsets. An unramified holomorphic covering of a finite degree $f^*: X\backslash \Sigma_1\to Y\backslash \Sigma_2$ extends to a morphism (i.e. a nonconstant holomorphic map) $f:X\to Y$.
    \end{corollary}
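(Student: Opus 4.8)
The plan is to combine the two theorems quoted just above the corollary: the second (call it the \emph{extension theorem}) produces an abstract completion of the covering space on which $f^*$ extends to a morphism, while the first (the \emph{uniqueness-of-completion theorem}) identifies that abstract completion with the given surface $X$. The only real work is to glue these two inputs together compatibly.

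First I would apply the extension theorem to the unramified finite covering $f^*\colon X\setminus\Sigma_1\to Y\setminus\Sigma_2$, playing the roles ``$Y$''$\,=Y$, ``$\Sigma$''$\,=\Sigma_2$, ``$Y^*$''$\,=Y\setminus\Sigma_2$, and ``$X^*$''$\,=X\setminus\Sigma_1$ (note that $X\setminus\Sigma_1$ is connected, being a connected compact surface with finitely many punctures). This yields a compact Riemann surface $\widehat X$ that contains $X\setminus\Sigma_1$ as the complement of a finite set, together with a nonconstant holomorphic morphism $\widehat f\colon\widehat X\to Y$ whose restriction to $X\setminus\Sigma_1$ equals $f^*$.

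Next I would observe that the given surface $X$ is \emph{also} a compact Riemann surface containing $X\setminus\Sigma_1$ as the complement of the finite set $\Sigma_1$. Thus $\widehat X$ and $X$ are two compactifications of one and the same punctured surface: the punctured surfaces $\widehat X\setminus\bigl(\widehat X\setminus(X\setminus\Sigma_1)\bigr)$ and $X\setminus\Sigma_1$ literally coincide, hence are trivially isomorphic, and the uniqueness-of-completion theorem then supplies an isomorphism $\Phi\colon X\to\widehat X$. Setting $f:=\widehat f\circ\Phi\colon X\to Y$ gives a morphism, and on $X\setminus\Sigma_1$ it equals $\widehat f\circ\mathrm{id}=f^*$, so $f$ extends $f^*$; nonconstancy is inherited from $\widehat f$.

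The main obstacle is precisely the compatibility of $\Phi$ with the common open part: the uniqueness-of-completion theorem as stated only asserts $X\cong\widehat X$, so one must argue that the isomorphism it produces genuinely restricts to the identity on the dense open subset $X\setminus\Sigma_1$, rather than being some unrelated automorphism of the completion. This is where the removable-singularity (identity) theorem enters, since a holomorphic isomorphism defined off a finite set extends uniquely, and the extension must therefore agree with the given map on the shared open part. Once this compatibility is secured, $f=\widehat f\circ\Phi$ is forced to restrict to $f^*$ and the corollary follows.
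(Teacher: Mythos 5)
Your proposal is correct and takes essentially the route the paper intends: the paper offers no written proof, presenting the corollary as an immediate consequence of the two quoted theorems from \cite{Giro}, and your argument (apply the extension theorem to get a compactification $\widehat X$ with morphism $\widehat f$ extending $f^*$, then identify $\widehat X$ with $X$ via uniqueness of compactification) is exactly that derivation made explicit. Your flagged subtlety --- that the isomorphism $\Phi$ must extend the identity on $X\setminus\Sigma_1$ rather than be an arbitrary identification, secured by the removable-singularity argument (properness of the identity between the punctured surfaces rules out essential singularities at the punctures) --- is a genuine point that the paper glosses over, and you resolve it correctly.
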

We prove Proposition \ref{Intermediate} formally in the case of $X = \mathbb{P}^1$ as we can rely directly upon the existence of polynomial equations in two variables.

\begin{proof} Let $\pi: Y\to \mathbb{P}^1$ have degree $n \geq 1$. Let us consider, as in the statement of the proposition, a proper intermediate field $E$. Then, $E = \pi^*\mathcal{M}(\mathbb{P}^1)(\alpha)$, and $\{1, \alpha,\dots , \alpha^{r-1}\}$ is a basis of $E$ over $\pi^*\mathcal{M}(\mathbb{P}^1)$. As such, as a field extension, $E$ is of degree $r$ with $1 \leq r \leq n$ and $E = \mathbb{C}(\pi, \alpha)$. We produce an irreducible polynomial $F(x, y) \in \mathbb{C}[x, y]$ such that $F(\pi, \alpha) = 0$ and a compact connected Riemann surface $X^F$ compactifying the zero locus of $F$. Moreover, $\mathcal{M}(X^F) = \mathbb{C}(\mathbf{x}, \mathbf{y})$ where $\mathbf{x}, \mathbf{y}$ denote the holomorphic projection of coordinates.

A standard construction of $F$ is available at p.68 in \cite{Giro} and pp.22--24 in \cite{wolf}. Let the irreducible minimal polynomial of $\alpha$ over $\mathcal{M}(\mathbb{P}^1)$ be$$M(T) = T^r + \pi^*a_1 T^{r - 1} +\dots + \pi^*a_r.$$Let $\psi$ denote the polynomial over $\pi^*M(\mathbb P^1)$ whose coefficients are the symmetric functions of $\alpha$. As $\alpha$ is annihilated by $\psi$, the minimal polynomial divides $\psi$ over $\pi^*\mathcal{M}(\mathbb{P}^1)$. We clear the denominators of $a_1,\dots , a_r$ by multiplying by their least common multiple to obtain a complex irreducible polynomial $F(x, y)$ out of $M$.

Now, let $\phi:Y\to \mathbb{P}^1\times\mathbb{P}^1$ the holomorphic map given by $\phi(y) = (\pi(y), \alpha(y))$. We restrict the map to image $\phi(Y)$, a compact connected analytic variety. Moreover $\phi$ is proper --- that is, preimage of a compact set is compact. Writing$$F(x, y) = p_0(x)y^r + p_1(x)y^{r-1}+\dots +p_r(x)$$with $p_0(x)$ a nonzero polynomial in $x$, we define a connected smooth Riemann surface $$C^x := \{(x_0, y_0)\in Z(F): F_x(x_0, y_0)\neq 0; p_0(x_0)\neq 0\}$$ and a holomorphic projection map $\mathbf{x}: C^x\to \mathbb{P}^1$ (cf. p.69 in \cite{Giro}). The charts of $C^x$ are furnished by the Implicit Function Theorem. We complete $C^x$ to $X^F$ by adding finitely-many points. We remove a finite set of points which are \textit{not} valued in $\mathbb{C}^2$ so that the image lies inside $Z(F)$. Moreover, the irreducible polynomial $F$ intersects $F_x$ at finitely many points and the common solutions of $p_0$ and $F$ are finite. Thus, we remove finitely many points from $Z(F)$ and fibers of these points (under $\phi$) and define a map $\phi: Y''\to C^x$. This map is proper and holomorphic. So, it has a finite degree. Finally, we see that $\phi$ has finitely many ramification points in $Y''$ because $\pi$ has finitely many ramification points in $Y$. We remove finitely many branch points from $C^x$ and ramification points from $Y''$ to get a unbranched map $\phi:Y'\to C^{x'}$. The restricted map of $\phi$ rewritten $f': Y'\to X'$ is an unramified holomorphic covering of finite degree. In combination with Corollary \ref{extens}, we are led to the existence of a holomorphic map$$f:Y\to X^F.$$From the fact that $\mathcal{M}(X^F) = \mathbb{C}(\mathbf{x},\mathbf{y})$ (cf. p.74, Corollary 1.93 \cite{Giro}) we obtain that $f^*\mathbf{x} = \pi$ and $f^*\mathbf{y} = \alpha$, for which we invoked the identity theorem. Thus $E = f^*\mathcal{M}(X^F)$. The map $g: X^F\to \mathbb{P}^1$ is the map $\mathbf{x}$. We obtain that $g(f(y)) = \pi(y)$ on $Y$ punctured at finitely many points; thus, $\pi = g\circ f$ is indeed true.
\end{proof}

\begin{remark}
As side note, we mention that $[E:\pi^*\mathcal{M}(\mathbb{P}^1)] = \deg(g) = r$ and $[\mathcal{M}(Y): E] =\deg(f) = n/r = m$.
\end{remark}

We generalize the above argument to curves of higher genus. We simply choose a compact Riemann surface defined by an irreducible polynomial over the function field of $X$. This is equivalent to analytically continuing germs of local solutions of an algebraic equation. The reader may compare to p.53, Theorem 8.9 in \cite{bre1}.\\ 

We include, for completeness, standard theorem that explains a construction of a compact Riemann surface out of an algebraic equation. This theorem confirms that our construction of an intermediate cover for the genus-$0$ case can be generalized to any genus.
\begin{theorem}\label{polsol}
    Suppose that $X$ is a compact Riemann surface and that$$P(T) = T^r + c_1T^{r - 1} +\dots + c_r\in \mathcal{M}(X)[T]$$is an irreducible polynomial of degree $r$. Then, there exists a compact Riemann surface $Y$ that can be realized as a branched holomorphic $r:1$ cover $\pi: Y\to X$ as well as a meromorphic function $F\in\mathcal{M}(Y)$ satisfying $(\pi^*P)(F) = 0$. The triple $(Y, \pi, F)$ is determined uniquely: if $(Z, \tau, G)$ satisfies the same properties, then there exists exactly one biholomorphic mapping $\zeta: Z\to Y$ such that $G = \zeta^*F$.
\end{theorem}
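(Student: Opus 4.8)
The plan is to realize $Y$ as the Riemann surface of the algebraic function defined by $P$, following the same three-category strategy used in the proof of Proposition \ref{Intermediate}. First I would isolate the finite \emph{bad set} $B\subset X$ consisting of the poles of the coefficients $c_1,\dots,c_r$ together with the zeros of the discriminant of $P$. Over $X^*=X\setminus B$ the coefficients are holomorphic and $P$ has $r$ distinct simple roots at every point, so the Implicit Function Theorem furnishes local holomorphic sections and organizes the root data into an unramified $r$-sheeted holomorphic covering $p\colon Y^*\to X^*$. The crucial role of the hypothesis that $P$ is \emph{irreducible} is to guarantee connectedness: the monodromy representation $\pi_1(X^*,x_0)\to S_r$ attached to this covering has transitive image precisely because $P$ does not factor over $\mathcal{M}(X)$, and transitivity forces $Y^*$ to be connected.

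Next I would compactify. Applying Corollary \ref{extens} (equivalently the extension theorems quoted from \cite{Giro}) to the unramified covering $p\colon Y^*\to X^*$ produces a compact Riemann surface $Y\supset Y^*$ together with a branched holomorphic extension $\pi\colon Y\to X$, with $Y\setminus Y^*$ finite; the degree is preserved, so $\pi$ is $r:1$. The meromorphic function $F$ is built tautologically: on $Y^*$ one sets $F$ equal to the root of $P$ singled out by the sheet, i.e. the local holomorphic section value, so that $(\pi^*P)(F)=0$ holds identically on $Y^*$. It remains to extend $F$ across the finitely many added points. Because $P$ is monic, any root is bounded in modulus by $1+\max_i|c_i|$; since each $c_i$ is meromorphic with at worst a pole along $B$, the function $F$ is dominated near each added point by a meromorphic quantity and therefore has at worst a pole there. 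Hence $F$ extends to a meromorphic function on all of $Y$, and the identity $(\pi^*P)(F)=0$ persists by continuity.

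For uniqueness I would pass to function fields. The relation $(\tau^*P)(G)=0$, the irreducibility of $P$, and the equality $\deg\tau=r$ together force $\mathcal{M}(Z)=\tau^*\mathcal{M}(X)(G)\cong\mathcal{M}(X)[T]/(P)$ for any competing triple $(Z,\tau,G)$, and identically $\mathcal{M}(Y)=\pi^*\mathcal{M}(X)(F)\cong\mathcal{M}(X)[T]/(P)$. Sending $F\mapsto G$ (both corresponding to the class of $T$) defines an isomorphism of these fields over $\mathcal{M}(X)$. Invoking the contravariant equivalence between compact connected Riemann surfaces and function fields over $\mathbb{C}$ that underlies Proposition \ref{Intermediate}, this field isomorphism is induced by a unique biholomorphism $\zeta\colon Z\to Y$; tracking the distinguished generators gives $G=\zeta^*F$ and the compatibility $\pi\circ\zeta=\tau$.

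I expect the main obstacle to be the meromorphic extension of $F$ across $B$ rather than the construction of $Y$ itself, since the latter is delivered essentially verbatim by the cited extension theorems. The delicate point is to rule out an essential singularity at the added points: one must argue carefully that the a priori bound on the roots of a monic polynomial, combined with the finite pole orders of the $c_i$, confines the growth of $F$ so that only poles can occur. A secondary subtlety is confirming that the two descriptions of $\mathcal{M}(Y)$, as $\pi^*\mathcal{M}(X)(F)$ and as the quotient field $\mathcal{M}(X)[T]/(P)$, genuinely coincide, which is where irreducibility is used a second time.
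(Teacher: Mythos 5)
Your proposal is correct and is essentially the argument the paper itself relies on: the paper states Theorem \ref{polsol} without proof, presenting it ``for completeness'' and deferring to the standard reference (p.53, Theorem 8.9 in \cite{bre1}) together with the extension results from \cite{Giro} already quoted as Corollary \ref{extens}, and your plan --- delete the poles of the $c_i$ and the zeros of the discriminant, assemble the unramified $r$-sheeted cover of roots via the Implicit Function Theorem, compactify through Corollary \ref{extens}, extend the tautological root function $F$ meromorphically using the Cauchy bound $|F|\leq 1+\max_i|c_i\circ\pi|$ together with Riemann's removable singularity theorem, and settle uniqueness through the anti-equivalence between compact Riemann surfaces and function fields --- is precisely that textbook proof. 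The one step you gloss is connectedness: the implication that irreducibility of $P$ forces transitive monodromy is itself proved by contraposition (a monodromy-invariant union of sheets yields, via the elementary symmetric functions of the roots on those sheets and the \emph{same} bounded-root meromorphic-extension lemma, a proper monic factor of $P$ over $\mathcal{M}(X)$), so the boundedness argument you single out for $F$ is in fact needed twice; since your plan already contains that lemma, this is a presentational point rather than a gap.
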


\begin{theorem}\label{Ritt}
    (Ritt) A nonconstant $r:1$ holomorphic map between compact Riemann surfaces $\pi: Y\to X$ is factorizable if and only if the Galois group of the branched covering $\emph{Gal}(\mathcal{M}(Y)^\emph{Gal}/\pi^*\mathcal{M}(X))$ is imprimitive.
\end{theorem}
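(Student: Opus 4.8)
The plan is to reduce the statement entirely to the Fundamental Theorem of Galois Theory, combining it with Proposition \ref{Intermediate} and the standard permutation-group characterization of imprimitivity via intermediate subgroups. First I would fix notation: write $L = \mathcal{M}(Y)^{\mathrm{Gal}}$ for the Galois closure of the extension $\mathcal{M}(Y)/\pi^*\mathcal{M}(X)$, set $G = \mathrm{Gal}(L/\pi^*\mathcal{M}(X))$, and let $H = \mathrm{Gal}(L/\mathcal{M}(Y))$ be the subgroup fixing $\mathcal{M}(Y)$, so that $\mathcal{M}(Y) = L^H$ and $\pi^*\mathcal{M}(X) = L^G$. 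Then $[G:H] = [\mathcal{M}(Y):\pi^*\mathcal{M}(X)] = r$ is the degree of the cover.

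Next I would recall the relevant permutation representation: $G$ acts transitively on the coset space $G/H$, of cardinality $r$, with $H$ the stabilizer of the trivial coset. By the theorem of Harris cited above (and the monodromy interpretation of Section 9), this coset action is precisely the transitive action on the fiber of a generic point of $\pi$ that realizes the Galois/monodromy group as a subgroup of $S_r$; hence the phrase \textbf{imprimitive} in the statement refers exactly to this action on $r$ symbols. I would then invoke the group-theoretic dictionary: a transitive action of $G$ on $G/H$ is imprimitive if and only if the point stabilizer $H$ is not a maximal subgroup, i.e. if and only if there exists a proper intermediate subgroup $H \subsetneq K \subsetneq G$. Here I would include the short verification that nontrivial block systems correspond bijectively to such intermediate subgroups, via the setwise stabilizer of a block in one direction and the $K$-orbit of the base point in the other.

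Finally, I would apply the Fundamental Theorem of Galois Theory to the extension $L/\pi^*\mathcal{M}(X)$: its order-reversing bijection restricts to an order-reversing bijection between intermediate subgroups $H \subseteq K \subseteq G$ and intermediate fields $\pi^*\mathcal{M}(X) \subseteq L^K \subseteq \mathcal{M}(Y)$. Consequently a proper intermediate subgroup $H \subsetneq K \subsetneq G$ exists if and only if a proper intermediate field $\pi^*\mathcal{M}(X) \subsetneq K' \subsetneq \mathcal{M}(Y)$ exists, and by Proposition \ref{Intermediate} the latter holds if and only if $\pi$ is factorizable. Chaining the equivalences --- factorizable $\Leftrightarrow$ proper intermediate field $\Leftrightarrow$ proper intermediate subgroup $\Leftrightarrow$ $G$ imprimitive --- yields the theorem.

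The main obstacle is not the Galois formalism, which is routine, but the bookkeeping that correctly identifies the abstract coset action of $G$ on $G/H$ with the concrete monodromy action on the $r$-point generic fiber that the statement actually names "the Galois group." One must be careful that the imprimitivity in play is that of the degree-$r$ permutation representation, and not of the regular representation of $G$ (whose order can strictly exceed $r$ when $\pi$ is not already Galois). A secondary point to confirm is that Proposition \ref{Intermediate} is available for general $X$ rather than only for $X = \mathbb{P}^1$, which the generalization remark following its proof supplies.
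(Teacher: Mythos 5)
Your proposal is correct, and its converse direction coincides with the paper's: both pass from a nontrivial block to its setwise stabilizer, a subgroup strictly between $H=\mathrm{Gal}(L/\mathcal{M}(Y))$ and $G$, take the fixed field under the Fundamental Theorem, and invoke Proposition \ref{Intermediate}. Where you genuinely diverge is the forward direction. The paper handles it geometrically: it identifies the Galois group with the cartographic (monodromy) group at a generic point via Harris's theorem and then argues that a factorization $\pi = g\circ f$ renders that monodromy group imprimitive, the blocks being the fibers of $f$ inside a generic fiber of $\pi$. You instead run the same algebraic dictionary in reverse --- factorizable $\Rightarrow$ proper intermediate field (Proposition \ref{Intermediate} is an equivalence) $\Rightarrow$ proper intermediate subgroup $H\subsetneq K\subsetneq G$ $\Rightarrow$ imprimitivity of the coset action on $G/H$ via non-maximality of the point stabilizer --- so the whole theorem becomes a single symmetric chain of equivalences. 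What your route buys is uniformity and the removal of any topological input beyond Harris's identification, which you correctly retain only to pin down which degree-$r$ permutation representation the word ``imprimitive'' refers to; your warning about the regular representation is exactly the right point, since $\lvert G\rvert$ can strictly exceed $r$ when $\pi$ is not already Galois (faithfulness of the coset action, i.e. triviality of the core of $H$ in $G$, is what the Galois-closure hypothesis supplies, and is worth one explicit line). What the paper's route buys is the concrete geometric picture of the block system on the fiber, tying the theorem to the cartographic-group discussion of the preceding section. Your final caveat about Proposition \ref{Intermediate} beyond $X=\mathbb{P}^1$ is settled by the paper's generalization remark via Theorem \ref{polsol}, so no gap remains.
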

\begin{proof} We have already referred to a topological proof using exclusively monodromy groups (p.65, Theorem 1.7.6 \cite{Lan}) that involves the construction of $2$-surfaces out of generalized constellations. However, we outline a proof available in the spirit of the that of Proposition 1 of \cite{Alg}. One direction can be tackled as follows: we identify the Galois group with the cartographic group at a generic point through an argument of Harris as per p.689 of \cite{Joe}, then prove that the cartographic group is imprimitive. To prove the converse, we return to the actual Galois group itself. Let $K$ be the Galois closure of the field extension $\mathcal{M}(Y)/\pi^*\mathcal{M}(X)$. Our aim is to set up a proper subfield between $\mathcal{M}(Y)$ and $\pi^*\mathcal{M}(X)$. Let us recall the Fundamental Theorem of Galois Theory briefly.

Let $K/L$ be a finite Galois extension of fields. Then there is an inclusion-reversing bijective correspondence between (i) the fixed subfields $K^H$ intermediate between $K/L$ corresponding to a subgroup $H$ of $\text{Gal}(K/L)$ and (ii) the automorphism groups $\text{Aut}(J/L)$ for an intermediate field $J$ between $K/L$. Finally, the degree of extension, $[K^H : L]$ is same as the group index $[\text{Gal}(K/L): H]$. So, the subgroups of the Galois group $G = \text{Gal}(K/\pi^*\mathcal{M}(X))$ are in one-to-one (inclusion reversing) correspondence with the intermediate subfields. We choose a nontrivial block $B_1$. Its set of stabilizers is a subgroup $H'$ of $G$.  The group $G$ contains a subgroup $H$ whose fixed field $K^H$ is same as $\mathcal M(Y)$. Then $H'$ is a proper subgroup of $G$ properly containing $H$. We consider a compact Riemann surface $\tilde{X}$ associated to $K^{H'}$. We can come up with the intermediate covering maps of $\pi$ defined by the inclusion $\pi^*\mathcal{M}(X)\subset K^{H'}\subset\mathcal{M}(Y)$ by Proposition \ref{Intermediate}.
\end{proof}

According to Theorem \ref{Ritt}, the existence of a decomposition of a generic spectral covering into two intermediate maps depends entirely on the Galois group. We are in a position now to pose a question that, to our knowledge, remains unexplored. The question is appealing as it bridges pure aspects of geometry with computational group theory. Let $X$ be a smooth irreducible projective algebraic curve. Given a holomorphic line bundle $L$ over $X$, we recall that smoothness of a spectral curve defined by $s = (s_1,\dots , s_r)\in \bigoplus_{i = 1}^r H^0(X, L^i)$ is an open condition. That is, given a suitable $s_0$, we have smooth spectral curves $X_s$ for each $s$ near $s_0$. An immediate question is: are the Galois groups invariant under small perturbations of spectral coefficients? (In asking this, we should note that a spectral covering map is not necessarily Galois, and we ought to consider Galois closures of field extensions.) Whether Galois groups remain imprimitive for any generic $s$ is an open question. It is worth probing how this scenario plays out over $\mathbb{P}^1$. A set of spectral coefficients defines a unique set of meromorphic functions on $\mathbb{P}^1$ and, in turn, a compact connected Riemann surface that covers $\mathbb{P}^1$. If the underlying spectral Riemann surface is smooth and connected, then there is a fiber preserving isomorphism between these two surfaces. In particular, we have that a generic cyclic spectral curve is isomorphic to a covering Riemann surface and the spectral covering map is equivalent to a covering map that admits a cyclic Galois group. For a compact Riemann surface of higher genus, the following preliminary conjecture may be formulated: 

\footnote{We are aware through some discussions in the community that the conjecture holds for higher genus as well.  That said, we are unaware of any definite reference with a proof.}\label{fn}
\begin{conjecture}\label{conj}
Let $X$ be a compact Riemann surface of genus $g_X$; $L\to X$, a holomorphic line bundle on $X$ of positive degree. We fix a generic section $s\in H^0(X, L^r)$ with distinct zeros. Consider the extension of function fields defined by the smooth integral cyclic spectral cover $\pi: X_s\to X$. This extension is Galois and the Galois group of cover $\pi$ is a cyclic group of order $r$.\footref{fn}
\end{conjecture}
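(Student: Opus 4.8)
The plan is to identify the function field extension $\pi^*\mathcal{M}(X)\subseteq\mathcal{M}(X_s)$ as a Kummer extension and then read off the Galois data directly. Write $K=\pi^*\mathcal{M}(X)$, which we identify with $\mathcal{M}(X)$; since $X$ is a curve over $\mathbb{C}$, the field $K$ contains $\mathbb{C}$ and hence a primitive $r$-th root of unity. First I would trivialize at the generic point: choosing any nonzero meromorphic section $\theta$ of $L$, the tautological section $\eta$ on $X_s$ becomes a genuine meromorphic function $y:=\eta/\pi^*\theta\in\mathcal{M}(X_s)$, while $f:=s/\theta^{\,r}$ is a nonzero element of $K$. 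The defining relation $\eta^r=s$ of the cyclic cover then reads $y^r=f$, so that $\mathcal{M}(X_s)=K(y)$ with $f\in K^{\times}$; the class of $f$ in $K^{\times}/(K^{\times})^r$ is independent of the choice of $\theta$.

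The second step is to show $x^r-f$ is irreducible over $K$ --- equivalently, that $X_s$ is integral, which is part of the hypothesis but which I would re-derive intrinsically in order to generalize the Eisenstein argument of Lemma \ref{irred} beyond $\mathbb{P}^1$. Because $s$ has only simple zeros, choose a point $x_0\in X$ at which $s$ vanishes to order exactly one, and let $v_{x_0}$ be the associated discrete valuation on $K$. As the order of vanishing of a section is independent of the trivialization, $v_{x_0}(f)=\operatorname{ord}_{x_0}(s)-r\operatorname{ord}_{x_0}(\theta)\equiv 1\pmod r$, so $v_{x_0}(f)$ is coprime to $r$. Hence $f$ cannot be a $p$-th power in $K$ for any prime $p\mid r$, and since $K$ contains $\mu_r$ this is exactly the standard criterion ensuring that $x^r-f$ is irreducible; equivalently, the image of $f$ in $K^{\times}/(K^{\times})^r$ has order $r$ and $[K(y):K]=r$.

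Finally I would invoke Kummer theory. Fix a primitive $r$-th root of unity $\zeta\in\mathbb{C}\subseteq K$. The assignment $\sigma\colon y\mapsto\zeta y$ preserves $y^r=f$ and so extends to a $K$-automorphism of $K(y)$ of order $r$; the roots $y,\zeta y,\dots,\zeta^{r-1}y$ of $x^r-f$ are distinct and all lie in $K(y)$, so $K(y)$ is the splitting field of a separable polynomial over $K$ and the extension is Galois. Since $\lvert\operatorname{Gal}(K(y)/K)\rvert=[K(y):K]=r$ and this group already contains the order-$r$ element $\sigma$, it is cyclic of order $r$, generated by $\sigma$. This is exactly the assertion of the conjecture, and through Proposition \ref{Intermediate} and Theorem \ref{Ritt} it also pins down the factorization behaviour of $\pi$.

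I expect the only genuinely delicate point to be the irreducibility in the higher-genus case. Over $\mathbb{P}^1$ one has an explicit polynomial presentation and Eisenstein applies verbatim; for general $X$ one must instead argue through the valuation $v_{x_0}$ and the non-$p$-th-power criterion, and the crux is that the simple-zero hypothesis on $s$ is precisely what forces $f$ to have order $r$ in $K^{\times}/(K^{\times})^r$. Once that is secured, the Galois property and the cyclicity of the Galois group are formal consequences of Kummer theory and demand no further geometric input.
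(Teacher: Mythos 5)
Your proposal is correct, and it in fact proves strictly more than the paper does: the paper only verifies Conjecture \ref{conj} for $g_X = 0$ (Section \ref{compl}), where it builds $X_s$ explicitly by gluing the two affine charts $y^r = s(x)$ and $\tilde y^r = \tilde s(\tilde x)$, obtains irreducibility from Eisenstein's criterion over $\mathbb{C}[x]$ at a simple zero of $s$ (Lemma \ref{irred}), identifies $\mathcal{M}(X_s) = \pi^*\mathcal{M}(\mathbb{P}^1)(f_y)$ with $f_y^r = \pi^*s$, and then concludes, just as you do, that $\mathcal{M}(X_s)$ is the splitting field of the separable polynomial $T^r - \pi^*s$, with cyclic Galois group generated by $f_y \mapsto \xi f_y$ (cross-checked against the deck transformation group). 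Your replacement of the chart-plus-Eisenstein step --- trivializing $\eta$ and $s$ by a meromorphic section $\theta$ of $L$ and using the discrete valuation $v_{x_0}$ at a simple zero of $s$ to get $v_{x_0}(f) \equiv 1 \pmod r$, hence $f \notin K^p$ for every prime $p \mid r$ --- is precisely the intrinsic form of that argument and is genus-independent, so it settles the higher-genus case that the paper explicitly leaves open (the footnote to the conjecture states the authors know of no reference with a proof). Two points deserve an explicit sentence in a final writeup: first, the standard irreducibility criterion for $x^r - f$ carries the extra condition $f \notin -4K^4$ when $4 \mid r$, which is indeed subsumed here because $i \in \mathbb{C} \subseteq K$ makes $-4 = (1+i)^4$ a fourth power (alternatively, invoke Kummer theory directly: with $\mu_r \subseteq K$, the degree $[K(\sqrt[r]{f}):K]$ equals the order of $f$ in $K^\times/(K^\times)^r$, and your valuation computation shows that order is exactly $r$); second, the identification $\mathcal{M}(X_s) = K(y)$ should be stated \emph{after} irreducibility, since it follows from the degree count $[K(y):K] = r = \deg\pi = [\mathcal{M}(X_s):\pi^*\mathcal{M}(X)]$ rather than from the relation $y^r = f$ alone as your first step suggests. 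The closing Galois-theoretic step is the same in both arguments; what your route buys is independence from the explicit polynomial presentation of $\mathcal{M}(\mathbb{P}^1)$, while the paper's explicit construction additionally yields the concrete ramification data and the genus of $X_s$ via Hurwitz, which your intrinsic argument does not need but also does not produce.
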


We verify this conjecture for $g_X = 0$. From the classical spectral correspondence \cite{BNR} we can embed the Jacobian of a spectral curve inside a quasi-projective variety of the S-equivalent semistable pairs over an intermediate spectral curve.

\section{Galois theory of cyclic spectral covers of $\mathbb P^1$}\label{compl}

Here, we pin down the Galois theory of generic cyclic spectral covers. The sections $s_i$ are complex polynomials over the affine coordinate charts of $\mathbb{P}^1$ which we can assume as meromorphic functions on $\mathbb{P}^1$. This understanding leads to the construction of an irreducible algebraic polynomial equation over the function field of $\mathbb{P}^1$. Theorem \ref{polsol} addresses an analytic approach to prove that there is a root of such an equation in a finite extension $K$ of $\mathcal{M}(\mathbb{P}^1)$ and there is a unique compact Riemann surface $Y$ such that $K$ is $\mathbb{C}$-algebra isomorphic to $\mathcal{M}(Y)$.\\

Our approach borrows from the construction of hyperelliptic curves (cf. \cite{Ric}). Let $t\geq 2$ and $r\geq 2$. Choose a generic section $s$ of $\mathcal{O}(tr)$ which has $tr$ distinct zeros $B = \{z_1,\dots ,z_{tr}\}\subset \mathbb{P}^1$. It can be represented by a complex polynomial $s$ with distinct zeros of degree $tr$ or $tr - 1$. Making a small change in notation, the spectral curve $X_s$ corresponding to $\lambda^r - s$ is given by 
\begin{equation}\label{Spec5}
\begin{cases}
y^r - s(x) = 0\\
\tilde{y}^r - \tilde{s}(\tilde{x}) = 0
\end{cases}
\end{equation}
with identification given in equations \ref{spec}. Strictly speaking, this following construction only makes sense in case section $s$ has distinct zeros. If we choose $s$ with repeated zeros we obtain a singular spectral curve. These singular curves admit singularity at the multiple zeros of $s$ and $\tilde{s}$.  We are able to desingularize a singular spectral curve with multiple techniques but there is no guarantee that the spectral correspondence will hold for that desingularized spectral curve.\\

The zeros of the section $s$ are the only branch points of the spectral covering map, each having a singleton fiber. There are exactly $tr$ ramification points, each with multiplicity $r$ in $X_s$. We recall the definition of the polynomial $\tilde{s}(z) = z^{tr}s(\frac{1}{z})$, and we note that both $s$ and $\tilde{s}$ admit simple roots and are of degree equal to either $tr$ or $tr - 1$. We have already presented the associated affine plane curves in $\mathbb{C}^2$ as $$C_1 = \{(x, y)\in \mathbb{C}^2: y^r = s(x)\}; C_2 = \{(\tilde{x}, \tilde{y})\in \mathbb{C}^2: \tilde{y}^r = \tilde{s}(\tilde{x})\}.$$ Here $C_1, C_2$ are noncompact smooth connected Riemann surfaces due to the fact that $s$ and $\tilde{s}$ admit distinct roots. To establish that $X_s$ is a compact Riemann surface, consider the following open subsets $U$ and $V$ of $X$ and $Y$ respectively:
$$U := \{(x, y)\in \mathbb{C}^2: y^r = s(x); x\neq 0\}; V := \{(\tilde{x}, \tilde{y})\in \mathbb{C}^2: \tilde{y}^r = \tilde{s}(\tilde{x}); \tilde{x}\neq 0\}.$$

We consider a holomorphic map $\psi: U\to V$ by $$\psi(x, y) = \left(\frac{1}{x}, \frac{y}{x^t}\right).$$ It is apparent that $\phi$ is an isomorphism. There are only finitely-many points in $C_1\backslash U$ and $C_2\backslash V$. As per the definition of the spectral curve $X_s$ we take disjoint union of $C_1 \sqcup C_2$ along $\psi$. That is, we identify each point in $C_1\backslash U$ to itself, each point in $C_2\backslash V$ to itself and each point $u\in U$ to itself or to $\psi(u)$. The `disjoint union' topology of $C_1\sqcup C_2$ descends to quotient topology of $C_1\sqcup C_2/\psi$. Finally, the space $ C_1\sqcup C_2/\psi$ which is nothing but $X_s$ is a compact (restriction on closed unit discs) connected (due to non-empty intersection of connected components) Hausdorff second countable topological space. The holomorphic charts of $C_1$ and $C_2$ produce charts of points in $X_s$ via inclusion maps on $C_1$ and $C_2$ (cf. p.60 in \cite{Ric}). Thus $X_s$ is a compact Riemann surface. Observe that we can embed $C_1$ and $C_2$ into $X_s$ and $X_s\backslash C_1$ and $X_s\backslash C_2$ are finite sets. Indeed, $X_s$ is compact completion of both $C_1$ and $C_2$. We want to find the genus of $X_s$ in an alternative way. To do this computation we want to obtain $X_s$ as a finite branched cover of $\mathbb{P}^1$. Observe that it is enough to understand the calculus over $C_1$ because $C_2$ contributes only finitely many points to $X_s$.\\

We have the first holomorphic projection coordinate map $\pi': C_1\to \mathbb{C}$ as a holomorphic surjective finite branched map with the zeros of $s$ as the branch points. (We can explore $\mathbb{P}^1$ as $\mathbb{C}\sqcup \mathbb{C}/\psi'$ while $\psi':\mathbb{C}^*\to \mathbb{C}^*$ is defined as $\psi'(z) = \frac{1}{z}$. Here $0\in\mathbb{P}^1$ is denoted by $0$ in first summand $\mathbb{C}$ and $\infty$ is denoted by second summand $\mathbb{C}$.) This is a restriction of the bundle map $\mathcal{O}(t)\to\mathbb{P}^1$. The pre-image of each branch point is singleton i.e. each of the ramification points of $\pi'$ has multiplicity $r$. The map is extended to a holomorphic branched covering map $\pi$ to $\mathbb{P}^1$ of finite degree $r$. In case $s$ has degree $tr$ we observe that $0$ is not a complex root of $\tilde{s}$ i.e. the zeros of $s$ are the only branch points of $\pi$. It is the same thing as saying that $\infty\in\mathbb{P}^1$ is not a branch point of $\pi$. On the other hand, in case $s$ has degree $tr - 1$, we have $0$ as a root of $\tilde{s}$ and there is a ramification point in $X_s$ which has multiplicity $r$ and $\pi$ maps that point to $\infty\in\mathbb{P}^1$. Thus in each case we have $tr$ ramification points in $X_s$ each with multiplicity $r$. We are in position to apply Hurwitz formula: $2(g_{X_s} - 1) = b_\pi + 2r(g_{\mathbb{P}^1} - 1) = tr(r - 1) - 2r \implies g_{X_s} = \frac{(tr-2)(r-1)}{2}$.
\begin{remark}
This calculation is consistent with the genus produced by Equation \ref{genus}.
\end{remark}

We have access to another meromorphic function on $X_s$, which we call $f_y$, given by projecting the $y$-coordinate. The $y$-coordinate map $C_1\to \mathbb{C}$ is extended to $C$ by mapping the points $(0, \tilde{y})$, which are contributed exclusively by the other affine component of the spectral curve, to $\infty\in \mathbb{P}^1$. The polynomial $s$ is a meromorphic function on $\mathbb{P}^1$ defining $\infty\mapsto\infty$. From the description of the holomorphic map $\pi$, we realize that $f_y^r = \pi^*s$ on $C_1$, which extends to an agreement over the whole of $X_s$. The (net) degree of $s$ as a meromorphic function on $\mathbb{P}^1$ is one of $tr$ or $tr - 1$, and $\pi^*s$ has degree $r\deg(s)$ and $\deg(f_y) = \deg(s)$. 

\begin{remark}
    In Theorem \ref{polsol}, setting $c_1 = \dots  = c_{r - 1} = 0$ and $c_r = -s$ yields $(X_s, \pi, f_y)$, which is the unique solution of the corresponding irreducible polynomial.
\end{remark}

With this, we have a complete understanding of the function field of $X_s$. We claim that $\mathcal{M}(X_s) = \mathbb{C}(\pi, f_y)$ or $\pi^*\mathcal{M}(\mathbb{P}^1)(f_y)$. First, we see that $\pi^*\mathcal{M}(\mathbb{P}^1)\subset  \pi^*\mathcal{M}(\mathbb{P}^1)(f_y)\subseteq\mathcal{M}(X_s)$ and both $\pi^*\mathcal{M}(\mathbb{P}^1)\subset \mathcal{M}(X_s)$ and $\pi^*\mathcal{M}(\mathbb{P}^1)\subset \pi^*\mathcal{M}(\mathbb{P}^1)(f_y)$ are $r:1$ extensions. We thereby reach the desired conclusion: that the field of meromorphic functions is$$\mathcal{M}(X_s) = \Bigl\{\sum_{j = 0}^{r-1}\pi^*r_jf_y^j: r_j\in \mathcal{M}(\mathbb{P}^1)\Bigr\}.$$We also obtain that $T^r - \pi^*s$ is a separable polynomial; that is, all of its roots are distinct, given by $\{\xi^if_y: i = 0,\dots , r-1\}$, wherein $\xi$ is an imprimitive $r$-th root of $1$. Indeed, $\mathcal{M}(X_s)$ is the splitting field of $T^r - \pi^*s$. Therefore, this extension is a Galois extension, as $\xi^if_y$ lies in $\mathcal{M}(X_s)$ for all $i$.\\

We are now in a position to compute the Galois group of the extension $\pi^*\mathcal{M}(\mathbb{P}^1)\subset \mathcal{M}(X_s)$. The equation $T^r - \pi^*s = 0$ over $\pi^*\mathcal{M}(\mathbb{P}^1)$ admits all roots in the function field of $X_s$. We observe that $f_y$ is an $r$-th root of $\pi^*s$, and we denote $f_y$ by $\sqrt[r]{\pi^*s}$ accordingly. The complete set of roots is $\{\sqrt[r]{\pi^*s}, \xi\sqrt[r]{\pi^*s},\dots , \xi^{r - 1}\sqrt[r]{\pi^*s}\}$. The action of an element of the Galois group maps $f_y$ to another root of this set, which thereby determines the action on the rest of the roots. The Galois group is cyclic and its generator $\sigma$ satisfies $\sigma(\sqrt[r]{\pi^*s}) = \xi \sqrt[r]{\pi^*s}$ in which $\xi$ denotes a primitive $r$-th root of unity $e^{\frac{2\pi i}{r}}$. On the other hand, $\sigma(\sqrt[r]{\pi^*s})$ is a root of the equation $T^r - \pi^*s = 0$.\\

Note that the deck transformation group (cf. p.57, Theorem 8.12 in \cite{bre1}) will be isomorphic to the Galois group. We may adopt the following approach of p.74 of \cite{Ric} (p.74) in computing the group of deck transformations: first, observe that $$(x,y)\mapsto (x, \xi^i y)$$ is a deck transformation on $X_s$ for $i = 0,\dots , r-1$. Indeed, these are the only deck transformations. Furthermore, for any $\sigma\in \text{Deck}(\pi)$, we have $\sigma^r(y) = s(x) = y^r$. For each point, we therefore have $\sigma^r(y) = \xi^i y$ for some $i$. By the continuity of the automorphism $\sigma$, we have that $i$ will be the same everywhere. Hence, we may identify the group of all deck transformations with the cyclic one generated by $\sigma_1 = (x,y)\mapsto (x, \xi y)$. We remark that the spectral cover defined by \ref{spec} nicely reflects the moniker \textit{cyclic cover} as used in p.73 of \cite{Ric}.\\

Let $r\geq 4$ be a composite number. We denote the corresponding Galois group by $G$ and use $H$ to denote a subgroup. We see that for each divisor $m$ of $r$ there is a unique subgroup of order $m$. The index of such a subgroup is $p$ where $r = m.p$. Now a subgroup $0\subset H\subset G$ of order $m$ uniquely associates to a finite extension $\mathcal{M}(X_s)\supset K\supset \pi^*\mathcal{M}(\mathbb{P}^1)$ by the Fundamental Theorem of Galois theory. From Proposition \ref{Intermediate} we obtain that there exists a holomorphic map $f: X_s\to X$ such that $K = f^*\mathcal{M}(X)$. Thus $\deg(f) = [\mathcal{M}(X_s) : K] = m$ and $\pi^*\mathcal{M}(\mathbb{P}^1)\subset K = f^*\mathcal{M}(X)$ such that there is a holomorphic map $g: X\to\mathbb{P}^1$ such that $\pi = g\circ f$.\\

We observe that the Riemann surface $X$ is uniquely determined up to isomorphism whenever we fix the degrees of the intermediate covering maps. Let us consider another pair of maps $\tilde{f}: X_s\to \tilde{X}$ and $\tilde{g}: \tilde{X}\to\mathbb{P}^1$ such that $\pi = \tilde{g}\circ \tilde{f}$ and $\deg(f) = \deg(\tilde{f}) = m$. It follows that $f^*\mathcal{M}(X)$ and $\tilde{f}^*\mathcal{M}(\tilde{X})$ are subfields of $\mathcal{M}(X_s)$ admitting the same degree of field extension. Moreover, there is only one subfield of index $m$. Hence, these two subfields coincide. As a consequence, $X$ and $\tilde{X}$ admit $\mathbb{C}$-algebra-isomorphic function fields (\cite{Giro,Ric}) and are isomorphic to one other.\\

The ramification points of $g$ along with their multiplicities are immediately known; hence, so too is the genus of $X$. Then, consider $tr$ many distinct points $$f(z_1),\dots ,f(z_{tr})\in X.$$ Now, $\text{mult}_\pi(z_i) = \text{mult}_f(z_i).\text{mult}_g(f(z_i)) \leq m.p = r$. Equality occurs if and only if $\text{mult}_f(z_i) = m$ and $\text{mult}_g(f(z_i)) = p$. Thus, each of these points is a ramification point under $g$. Moreover, these are the only ramification points of $g$ and they are mutually distinct. If $x_0\in X$ is a ramification point, choose $z$ in the (non-empty) fiber of $x_0$ under $f$. Then, $\text{mult}_\pi(z) = \text{mult}_f(z).\text{mult}_g(x_0) \geq 2$ i.e. $z$ is a ramification point of $\pi$. Thus $z = z_i$ for some $i$ and $x_0 = f(z_i)$. We therefore observe that $f(z_1),\dots , f(z_{tr})$ are the only ramification points of $g$, each possessing multiplicity $p$. Thus $b_g = tr(p - 1)$. From the Hurwitz formula, we obtain the genus of $X$.
\begin{example}
$g_X = \frac{tr(p - 1)}{2} + 1 - p$.
\end{example}

In the case $t = 2$ and $r = 4$, we obtain a hyper-elliptic curve $X$ of genus $3$. This is, once again, a unique curve in an appropriate sense.\\

Let us consider a $g^*(\mathcal{O}(t))$-twisted pair $(E',\phi': E'\to E'\otimes g^*\mathcal{O}(t))$ on $X$ such that $(g_*E', g_*\phi')$ is a stable Hitchin pair. Let suppose if possible that $(E',\phi')$ admits a nontrivial proper $\phi'$-invariant subbundle $F'$ such that $\mu_{F'} \geq \mu_{E'}$. Then $g_*F$ is a nontrivial proper $g_*\phi'$-invariant subbundle of $g_*E'$ such that $\mu_{g_*F'} \geq \mu_{g_*E'}$. This is a contradiction; hence, $(E',\phi')$ must be stable. The same thing holds with semistability. This is observed in the following diagram:
\begin{equation}\label{pushf}
\begin{tikzcd}
F' \arrow[r, "\phi'"] \arrow[d, "i"]
& F'\otimes g^*\mathcal{O}(t) \arrow[d, "i \otimes id"] \\
E' \arrow[r, "\phi' "]
& E'\otimes g^*\mathcal{O}(t)
\end{tikzcd}\xrightarrow{g_*} \begin{tikzcd}
g_*F' \arrow[r, "g_*\phi'"] \arrow[d, "i"]
& g_*F'\otimes \mathcal{O}(t) \arrow[d, "i \otimes id"] \\
g_*E' \arrow[r, "g_*\phi' "]
& g_*E'\otimes \mathcal{O}(t)
\end{tikzcd}
\end{equation}

Let $s\in H^0(\mathbb{P}^1,\mathcal{O}(tr))$ be a generic section and $(E, \phi)$ be a generic pair with characteristic polynomial $\lambda^r - s$ as in \ref{Spec}. From the factorization of $\pi = g\circ f$ there is a pair on $X$ namely $(f_*M, f_*\eta)$ and $(E, \phi) \cong g_*f_*(M, \eta) \cong g_*(f_*M, f_*\eta)$. Further we have that $[(M,\eta)]\mapsto [(f_*M, f_*\eta)]$ is an injective morphism into the space of isomorphism classes of $g^*\mathcal{O}(t)$-twisted pairs on $X$. Indeed $(f_*M, f_*\eta) \cong (f_*M', f_*\eta) \implies g_*(f_*M, f_*\eta) \cong g_*(f_*M', f_*\eta) \implies \pi_*(M,\eta) \cong \pi_*(M', \eta)$. From the correspondence in Remark \ref{spec1}, $M \cong M'$.\\ 

Let the set of the isomorphism classes of stable $g^*\mathcal{O}(t)$-twisted pairs of rank $m$ on $X$ be denoted by $\mathcal{N}$; similarly, denote by $\mathcal N'$ the collection of isomorphism classes of pairs inside $f_*\text{Pic} (X_s)$.  Continuing this way, we use $\mathcal N''$ to denote the set of isomorphism classes of $t$-twisted Hitchin pairs $(E,\phi)$ of rank $r$ on $\mathbb{P}^1$ with the characteristic equation $\lambda^r = s$. Note that the restricted pushforward morphism $g_*$ is immediately injective and surjective. The pair $(f_*M, f_*\eta)$ satisfies an equation $\lambda^r + g^*s_1\lambda^{r - 1} +\dots + g^*s_r = 0$.  In case of a cyclic cover, we have $s_1 = s_2 =\dots = s_{r - 1} = 0$ as components of a tuple in $\bigoplus_{i = 1}^rg^*(H^0(\mathbb{P}^1, \mathcal{O}(ti)))\subset\bigoplus_{i = 1}^rH^0(\mathbb{P}^1, g^*\mathcal{O}(ti))$. However, this is certainly not the characteristic polynomial of this pair on $X$; rather it is an annihilating polynomial. Informally, we name $\bigoplus_{i = 1}^rg^*(H^0(\mathbb{P}^1, \mathcal{O}(ti)))$ an \textit{iterated Hitchin base}.\\

 \begin{remark}
The degree of the shifted Jacobian on $X_s$, regarded as the degree of the line bundles it parametrizes, is$$d' = d + (g_{X_s} - 1) + r = d + (r - 1)(\frac{tr - 2}{2} + 1),$$where $d=\deg(E)$.  At the same time, we have $H^{-1}(s)\cong \text{Jac}^{d'}(X_s)$. From $\pi = g\circ f$ with $\deg(f) = m$ and $\deg(g) = p$, it follows that $\deg(f_*M) = d + \frac{mtr(p - 1)}{2}$.
\end{remark}

Although the existence of the iterated spectral covers is established, we lack any precise control over them. Let $\mathcal{J}$ denote the image $f_*(\mbox{Jac}^{d'}(X_s))$. We obtain a lower bound on Nitsure's dimension (cf. \cite{Nitin} Proposition 7.1) of the Zariski tangent space of stable $g^*\mathcal{O}(t)$ pairs on an intermediate (or iterated) spectral cover $X$, by a smooth embedding $f_*$ (if $f_*$ is a smooth embedding at all): it is at least the genus $g_{X_s}$ of the spectral curve $X_s$ due to containment of $\mathcal{J}$. As $\deg(g^*\mathcal{O}(t)) = tp$ we calculate $2(g_X - 1) - \deg(g^*\mathcal{O}(t)) = p(t(m(p - 1) - 1) - 2)$. This number is generally positive and $0$ at the base case $t = 2, r = 4$. So, there is lack of information to compute Nitsure's dimension in any of these cases $t\geq 2$ and $r\geq 4$. In this scenario, we are unable to comment if $\mathcal{J}$ is the full space $\mathcal{M}'_X(m, d'',g^*\mathcal{O}(t))$. We have no specific information about the compact Riemann surface $X$ apart from its abstract existence. So, it is also difficult to decide whether $X$ can be embedded within $\text{Tot}(\mathcal{O}(t))$.\\

The collection of the isomorphism classes of pairs $\mathcal{J}$ is identified with the elements of the Jacobian of $X_s$. We can refer to this object as an \textit{iterated Hitchin fiber}. The spectral correspondence of the line bundles and the pairs thus extends to a threefold correspondence. We organize the whole discussion in the form of the following theorem.

\begin{theorem}\label{finalth}
    Let $s\in H^0(\mathbb{P}^1, \mathcal{O}(tr))$ be a generic holomorphic section with $t\geq 2$ and $r$ be a composite number.\\

    \emph{\textbf{A:}} The isomorphism classes of $t$-twisted Hitchin pairs $(E,\phi)$ of rank $r$ on $\mathbb{P}^1$ satisfying the characteristic equation $\lambda^r = s$ (name this collection $\mathcal{N}''$) are in one-to-one correspondence with the isomorphism classes of line bundles $M$ on $X_s$. The correspondence is given with pushforward by the covering map $\pi$. In case we fix degree of $E$ to be $d\in\mathbb{Z}$, we see that $\pi_*: \emph{Jac}^{d'}(X_s) \to \mathcal{M}_{\mathbb{P}^1}(r, d, t)$ is a one-to-one correspondence, while $d' = d + (r - 1)(\frac{tr - 2}{2} + 1)$.\\
    
     \emph{\textbf{B:}} Given a factorization $r = mp$ with $p,m\geq 2$ there exists a compact Riemann surface $X$ and nonconstant holomorphic maps $f:X_s \to X$ of degree $m$ and $g: X\to \mathbb{P}^1$ of degree $p$ such that $\pi = g\circ f$. If there is another compact Riemann surface $\tilde{X}$ and nonconstant holomorphic maps $\tilde{f}:X_s \to \tilde{X}$ of degree $m$ and $\tilde{g}: \tilde{X}\to \mathbb{P}^1$ of degree $p$ such that $\pi = \tilde{g}\circ \tilde{f}$ then $X\cong \tilde{X}$.\\

     \emph{\textbf{C:}} Fix a chosen factorization of $r = mp$ and $\pi = g\circ f$. Let the space of isomorphism classes of stable $g^*\mathcal{O}(t)$-twisted pairs of rank $m$ on $X$, be $\mathcal{N}$. Then $f_*:\emph{Pic}(X_s)\to \mathcal{N}$ is a well-defined injective morphism with image $\mathcal{N}'$. There is a bijective correspondence $g_*:\mathcal{N}'\to \mathcal{N}''$. Given $\deg(E) = d$, the pushforward morphism given by $f_*: \emph{Jac}^{d'}(X_s) \to \mathcal{M}'_X(m, d'', g^*\mathcal{O}(t))$ is an injective morphism, wherein $\mathcal{M}'_X(m, d'', g^*\mathcal{O}(t))$ denotes the collection of the isomorphism classes of stable $g^*\mathcal{O}(t)$-twisted Hitchin pairs of rank $m$ and degree $d'' = d + \frac{mtr(p - 1)}{2}$ on $X$. Let $\mathcal{J}$ denote the image $f_*(\emph{Jac}^{d'}(X_s))$. Then $g_*:\mathcal{J}\to H^{-1}(s)$ is a bijective correspondence as $H$ is the Hitchin morphism on $\mathcal{M}_{\mathbb{P}^1}(r, d, \mathcal{O}(t))$.
    \end{theorem}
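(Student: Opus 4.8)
The plan is to obtain all three parts by specializing and recombining machinery already in place, since the generic cyclic spectral curve $X_s$ cut out by $\lambda^r-s$ is smooth and integral (Lemma \ref{irred}) and hence a legitimate input for the spectral correspondence. For Part A, I would apply Theorem \ref{extension} to $\pi\colon X_s\to\mathbb{P}^1$, obtaining a bijection between isomorphism classes of finite-rank bundles $M$ on $X_s$ and $t$-twisted pairs on $\mathbb{P}^1$ annihilated by $\lambda^r-s$. Restricting to the rank-one case $n=1$ forces the characteristic polynomial to be exactly $\lambda^r-s$ by Remark \ref{spec1}, so members of $\mathcal{N}''$ correspond precisely to line bundles $M\in\operatorname{Pic}(X_s)$, while integrality simultaneously makes each such pair stable (Remark \ref{invariant}). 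To pin the degrees, I would feed $\operatorname{rank}(M)=1$, $\deg(\pi)=r$, $g_{\mathbb{P}^1}=0$, and $g_{X_s}=\frac{(tr-2)(r-1)}{2}$ into the pushforward degree formula \ref{pushdeg}; this yields $\deg(\pi_*M)=d$ exactly when $\deg(M)=d'=d+(r-1)\left(\frac{tr-2}{2}+1\right)$, identifying the relevant component as $\operatorname{Jac}^{d'}(X_s)$.

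For Part B I would invoke the Galois-theoretic computation of Section \ref{compl}: the extension $\pi^*\mathcal{M}(\mathbb{P}^1)\subset\mathcal{M}(X_s)$ is the splitting field of the separable polynomial $T^r-\pi^*s$ and is therefore Galois with cyclic group $G\cong\mathbb{Z}/r\mathbb{Z}$. Given a factorization $r=mp$ with $m,p\geq 2$, the unique subgroup $H\leq G$ of order $m$ determines, through the Fundamental Theorem of Galois Theory, a unique intermediate field $K^H$ with $[\mathcal{M}(X_s):K^H]=m$ and $[K^H:\pi^*\mathcal{M}(\mathbb{P}^1)]=p$. Proposition \ref{Intermediate} then manufactures the surface $X$ together with $f\colon X_s\to X$ of degree $m$ and $g\colon X\to\mathbb{P}^1$ of degree $p$ obeying $\pi=g\circ f$. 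For uniqueness I would note that any competing $\tilde{X}$ with maps of the same degrees yields an intermediate field $\tilde{f}^*\mathcal{M}(\tilde{X})$ of degree $p$ over the base, hence the fixed field of a subgroup of order $m$; since a cyclic group has exactly one subgroup of each order, $\tilde{f}^*\mathcal{M}(\tilde{X})=K^H=f^*\mathcal{M}(X)$, and the categorical equivalence between function fields and curves forces $X\cong\tilde{X}$.

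For Part C I would run the composite projection formula (Corollary \ref{itpush} and Remark \ref{midpush}) to see that $f_*$ carries a line bundle $M$, equipped with its tautological $\pi^*\mathcal{O}(t)=h^*\mathcal{O}(t)$-twisted action $\eta$, to a $g^*\mathcal{O}(t)$-twisted pair $(f_*M,f_*\eta)$ of rank $m$ on $X$. Stability of the image follows from the pushforward argument recorded in diagram \ref{pushf}: a destabilizing invariant subbundle of $(f_*M,f_*\eta)$ would push forward under $g_*$ to a destabilizing invariant subbundle of $g_*(f_*M,f_*\eta)=\pi_*(M,\eta)=(E,\phi)$, contradicting the stability from Part A. Injectivity of $f_*$ is then a consequence of the functoriality $g_*\circ f_*=(g\circ f)_*=\pi_*$ (Theorem \ref{func}(ii)) combined with the bijectivity of $\pi_*$: an isomorphism $(f_*M,f_*\eta)\cong(f_*M',f_*\eta)$ pushes to $\pi_*(M,\eta)\cong\pi_*(M',\eta)$, whence $M\cong M'$. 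The same factorization $\pi_*=g_*\circ f_*$ shows $g_*$ restricts to a bijection $\mathcal{N}'\to\mathcal{N}''$ and, on the distinguished degree components, to a bijection $\mathcal{J}=f_*(\operatorname{Jac}^{d'}(X_s))\to H^{-1}(s)\cong\operatorname{Jac}^{d'}(X_s)$; the degree $d''=d+\frac{mtr(p-1)}{2}$ is read off from formula \ref{pushdeg} applied to $f$, now using source genus $g_{X_s}$ and base genus $g_X=\frac{tr(p-1)}{2}+1-p$.

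The functorial bookkeeping and degree computations are routine; the step I expect to demand the most care is the moduli-theoretic content of Part C rather than its pointwise version. Specifically, the delicate points are (i) confirming that $f_*$ is a genuine morphism into the stable locus $\mathcal{M}'_X(m,d'',g^*\mathcal{O}(t))$, not merely an injection of underlying sets, and (ii) making precise that its image $\mathcal{N}'$ is a \emph{proper} subset of $\mathcal{N}$, since a stable $g^*\mathcal{O}(t)$-pair of rank $m$ on $X$ may well arise as the pushforward of a higher-rank bundle on $X_s$ rather than a line bundle, so $f_*$ is not expected to be surjective. I would address (i) by appealing to the semistability-preserving behaviour of finite pushforward (Corollary \ref{extsemi} and the slope comparison behind it) together with the degree formula to fix the target component, and I would treat (ii) as a genuine limitation to be recorded rather than removed.
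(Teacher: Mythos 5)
Your proposal is correct and follows essentially the same route as the paper's own assembly of Theorem \ref{finalth}: Part A via Theorem \ref{extension} with $n=1$ (Remark \ref{spec1}), stability from integrality (Remark \ref{invariant}), and the degree formula \ref{pushdeg}; Part B via the cyclic Galois group of the splitting field of $T^r-\pi^*s$, the Fundamental Theorem of Galois Theory, Proposition \ref{Intermediate}, and uniqueness of the subgroup of each order in a cyclic group; Part C via the composite projection formula (Corollary \ref{itpush}, Remark \ref{midpush}), the destabilization-pushforward argument of diagram \ref{pushf}, and injectivity from the factorization $\pi_*=g_*\circ f_*$. Your two flagged caveats --- whether $f_*$ is a morphism of moduli rather than a map of sets, and the expected non-surjectivity of $f_*$ onto $\mathcal{N}$ --- coincide with the paper's own hedging, which likewise asserts only injectivity and explicitly leaves the finer moduli-theoretic structure (e.g.\ whether $\mathcal{J}$ fills out $\mathcal{M}'_X(m,d'',g^*\mathcal{O}(t))$) undetermined.
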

\begin{remark}
    Repeating our argument for a series of subgroups $0\subset H_1\subset H_2\subset\dots \subset H_k\subset G$, we decompose the covering map $\pi$ on ${X_s}$ in a polygonal series of iterated covers $X_1,.., X_k$, and we derive an intermediate series of twisted Hitchin pairs by iterating the composite projection formula. For example, on the $i$-th curve $X_i$ we may isolate a pair $(\beta(i)_*M, \alpha(i)^*L, \beta(i)_*\eta)$, expressed as per the convention of Equation \ref{mmoc} in which $\alpha(i) = f_{k+1}\circ\dots \circ f_{i+1}$ and $\beta(i) = f_i\circ..\circ f_1$. This discussion is captured in the following commutative diagram:
\end{remark}

\begin{equation}
    \begin{tikzcd}[column sep={1cm,between origins}, row sep={1.732050808cm,between origins}]
    & X_s \arrow[rr, "f_1"] \arrow[ld, "\pi"] \arrow[ddrr, phantom, "\circlearrowright"] && X_1 \arrow[rd, "f_2"] & \\
    \mathbb{P}^1 &  &&  & \vdots\arrow[ld, "f_{k-1}"'] \\
    & X_k \arrow[lu, "f_{k+1}"'] && X_{k-1} \arrow[ll, "f_k"'] & 
\end{tikzcd}
\end{equation}

\printbibliography

\end{document}